\newcommand{\R}{{\mathbb R}}
\newcommand{\Hf}{{\mathbb{H}}}
\newcommand{\cS}{{\mathcal S}}
\newcommand{\cK}{{\mathcal K}}
\newcommand{\cB}{{\mathcal B}}
\newcommand{\cC}{{\mathcal C}}
\newcommand{\cR}{{\mathcal R}}
\newcommand{\cT}{{\mathcal T}}
\newcommand{\uu}{\mathbf{u}}
\newcommand{\vv}{\mathbf{v}}
\newcommand{\w}{\mathbf{w}}
\newcommand{\h}{\mathbf{h}}
\newcommand{\g}{\mathbf{ g}}
\newcommand{\e}{\varepsilon}
\newcommand{\al}{\alpha}
\newcommand{\de}{\delta}
\newcommand{\ka}{\kappa}
\newcommand{\la}{\lambda}
\newcommand{\si}{\sigma}
\newcommand{\vp}{\varphi}
\newcommand{\dist}{\operatorname{dist}}
\newcommand{\loc}{\operatorname{loc}}
\newcommand{\D}{\nabla}
\newcommand{\p}{\partial}
\newcommand{\mean}[1]{\langle{#1}\rangle}
\newcommand{\lmean}[1]{\left\langle #1\right\rangle}
\theoremstyle{plain}
\numberwithin{equation}{section}
\newtheorem{thm}{Theorem}[section]
\newtheorem{cor}[thm]{Corollary}
\newtheorem{lem}[thm]{Lemma}
\newtheorem{prop}[thm]{Proposition}
\theoremstyle{definition}
\newtheorem{defn}[thm]{Definition}
\theoremstyle{remark}
\newtheorem{rem}[thm]{Remark}
\numberwithin{equation}{section}
\newcommand{\be}{\begin{equation}}
\newcommand{\ee}{\end{equation}}
\newcommand{\eps}{\varepsilon}
\newcommand{\comment}[1]{}
\begin{document}

\author{Daniela De Silva}
\address[Daniela De Silva]
{Department of Mathematics,  Barnard College, Columbia University,
 New York, NY,
10027} 
\email[Daniela De Silva]{desilva@math.columbia.edu}

\author{Seongmin Jeon}
\address[Seongmin Jeon]
{Department of Mathematics Education, Hanyang University,  222 Wangsimni-ro, Seongdong-gu, Seoul 04763, Republic of Korea} 
\email[Seongmin Jeon]{seongminjeon@hanyang.ac.kr}

\author{Henrik Shahgholian}
\address[Henrik Shahgholian]
{Department of Mathematics, KTH Royal Institute of Technology, 100 44 Stockholm, Sweden
} 
\email[Henrik Shahgholian]{henriksh@kth.se}


\title[The free boundary for a superlinear system]
{The free boundary for a superlinear system }

\keywords{Minimization, System, Free boundary, Regularity, Linearization} 

\subjclass[2020]{35R35, 35J60}

\begin{abstract}

In this paper, we study superlinear systems that give rise to free boundaries. Such systems appear for example from the minimization of the energy functional
$$
\int_{\Omega}\left(|\D\uu|^2+\frac2p|\uu|^p\right),\quad 0<p<1,
$$
but solutions can be also understood in an ad hoc viscosity way.

First, we prove the optimal regularity of minimizers using a variational approach. Then, we apply a linearization technique to establish the $C^{1,\alpha}$-regularity of the ``flat'' part of the free boundary via a viscosity method. Finally, for minimizing free boundaries, we extend this result to analyticity.

\end{abstract}

\maketitle 

\tableofcontents

\section{Introduction and Background}

\subsection{Energy minimizers}

For a bounded open set $\Omega$ in $\R^n$ $(n\ge2)$ and a function $g:\partial \Omega\to\R$, the classical one-phase Alt--Phillips equation concerns non-negative minimizers of
$$
\int_\Omega\left(|\D u|^2+\frac2pu^p
\right),\quad 0<p<2,
$$ 
over $\cK=\{u\in W^{1,2}(\Omega)\,:\, u\geq 0, \  u=g\,\text{ on }\,\partial \Omega\}$. Local  minimizers satisfy the Alt--Phillips equation
\begin{align*}
        \Delta u=u^{p-1} \quad \text{in $\{u>0\}$},\,\,\,
         \qquad |\nabla u|= 0 \quad\text{on $\p \{u>0\} \cap \Omega$.}
\end{align*}
When $p=1$, this is  the classical obstacle problem
$$
\Delta u=\chi_{\{u>0\}},\,\,\, u\ge0\quad\text{in }\Omega.
$$
The regularity of minimizers and their free boundaries $\partial{u>0} \cap \Omega$ was studied in \cite{Phi83}, \cite{AltPhi84}, and several other papers.
The Alt--Phillips problem has since been extended to various settings, including parabolic equations, fully nonlinear operators, nonlocal versions, systems, and many others.

In this paper, we are interested in the following system version. Given $\g:\partial \Omega\to\R^m$, $m\ge1$, we consider minimizers of the energy functional
\begin{align}
    \label{eq:ftnal}
    E(\uu):=\int_\Omega\left(|\D\uu|^2+\frac2p|\uu|^p\right)
\end{align}
among all functions $\uu\in W^{1,2}(\Omega;\R^m)$ with $\uu=\g$ on $\partial \Omega$. Minimizers $\uu$ solve the system
\begin{align}
    \label{eq:system}
    \Delta\uu=|\uu|^{p-2}\uu\chi_{\{|\uu|>0\}}\quad\text{in }\Omega.
\end{align}
When $1\le p<2$, the $W^{2,q}$-regularity of solutions to \eqref{eq:system} (for any $q<\infty$) and the $C^{1,\alpha}$-regularity of the ``regular'' part of the free boundary $$\Gamma(\uu)=\partial\{x\,:\, |\uu(x)|>0\}\cap \Omega$$ were established in \cite{AndShaUraWei15} and \cite{FotShaWei21}. Additionally, for such range of $p$, the higher regularity of the free boundary was proved in \cite{FotKoc24}. 

However, for the remaining case $0<p<1$, regularity results have been an open question for over a decade since the study of the case $p=1$ in \cite{AndShaUraWei15}. The objective of this paper is to fill this gap by establishing similar regularity results to those in \cite{AndShaUraWei15, FotShaWei21, FotKoc24} for the range $0<p<1$.

\subsection{Notation.} Before providing the main results and the structure of the paper, we collect here some notation that we use throughout the paper. 

We denote by $\{e_i\}_{i=1,\ldots,n}$,$\{f^i\}_{i=1,\ldots,m}$ canonical basis in $\R^n$ and $\R^m$ respectively. Unit directions in $\R^n$ and $\R^m$ will be typically denoted by $e$ and $f$. 
The Euclidean norm in either space is denoted by $|\cdot|,$ while the dot product is denoted by $\langle \cdot, \cdot \rangle.$
We indicate the points in $\R^n$ by $x=(x',x_n)$, where $x'=(x_1,\cdots,x_{n-1})\in \R^{n-1}$.
For $x\in\R^n$ and $r>0$, we let
\begin{alignat*}{2}
  B_r(x)&:=\{y\in \R^n:|x-y|<r\}&\quad&\text{ball in $\R^n$,}\\
  B^{\pm}_r(x')&:=B_r(x',0)\cap \{\pm x_n>0\}&& \text{half-ball},\\
  B'_r(x')&:=B_r(x',0)\cap \{x_n=0\} &&\text{thin ball.}
\end{alignat*}
When the center is the origin, we write $B_r=B_r(0)$, $B_r^\pm=B_r^\pm(0)$, and  $B_r'=B'_r(0)$.
For a domain $\Omega$, we indicate the integral mean value of $\uu$ by $$
\overline{\uu}_\Omega:=\fint_\Omega\uu=\frac1{|\Omega|}\int_\Omega\uu.
$$
In particular, when $\Omega=B_r(x_0)$, we simply write
 $$
\overline{\uu}_{x_0,r}:=\overline{\uu}_{B_r(x_0)}.
$$
 We use the notations $\Omega(\uu)$ and $B_1(\uu)$ to denote the positivity set of $|\uu|$ inside $\Omega$ and $B_1$, respectively.

We fix
$$
0<p<1
$$
and set
$$u_0(t)= c_p (t^+)^\ka, \quad \ka:=\frac2{2-p}\in (1,2), \quad c_p:= \left[\ka(\ka-1)\right]^{\frac{1}{p-2}},$$
that is $u_0$ solves the equation
\begin{equation}\label{ODE} u_0''(t) = u_0^{p-1}(t) \quad \text{in $(0, +\infty)$}.\end{equation}

Finally, constants depending only on $n,m$ and $p$ will be called universal.

\subsection{Main results}
In this section, we state our main results. 
Our first result concerns the optimal regularity of minimizers.

\begin{thm}\label{thm:grad-u-holder}
Let $\uu\in W^{1,2}(B_1;\R^m)$ be a minimizer to $E(\cdot)$ in $B_1$. Then $\uu\in C^{1,\ka-1}(B_1)$. Moreover, for any $K\Subset B_1$, 
$$
\|\uu\|_{C^{1,\ka-1}(K)}\le C(n,p,K)\left(\|\uu\|_{W^{1,2}(B_1)}+1\right).
$$
\end{thm}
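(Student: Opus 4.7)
The plan is to carry out a three-stage proof: an $L^\infty$-bound, an optimal $\kappa$-growth estimate at the free boundary, and an interior $C^{1,\kappa-1}$-estimate via rescaling. For Stage 1, a standard truncation-comparison argument --- replacing $\uu$ by $(M/|\uu|)\uu$ on $\{|\uu|>M\}$ strictly decreases $E$ when $M$ is chosen sufficiently large in terms of the boundary data --- yields $\|\uu\|_{L^\infty(K)}\le C(n,p,K)(\|\uu\|_{W^{1,2}(B_1)}+1)$ for every $K\Subset B_1$.

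Stage 2 is the optimal $\kappa$-growth at $\Gamma(\uu)$: there exist $C,r_0>0$, depending on $n,p,K$, and $\|\uu\|_{W^{1,2}(B_1)}$, such that $\sup_{B_r(x_0)}|\uu|\le Cr^\kappa$ for every $x_0\in\Gamma(\uu)\cap K$ and $0<r<r_0$. The rescaling $\uu_{x_0,r}(y):=r^{-\kappa}\uu(x_0+ry)$ preserves $E$-minimality and the system, so $\kappa$ is the scale-critical growth exponent. I would combine two ingredients: (i) a Weiss-type monotonicity formula for $E$, namely the quantity
$$W(r;x_0,\uu)=r^{2-n-2\kappa}\int_{B_r(x_0)}\Bigl(|\D\uu|^2+\tfrac{2}{p}|\uu|^p\Bigr)-\kappa\,r^{1-n-2\kappa}\int_{\partial B_r(x_0)}|\uu|^2,$$
which is monotone nondecreasing in $r$ by a standard domain-variation identity for minimizers, and which yields uniform local energy decay when combined with Stage 1; and (ii) a pointwise upgrade via contradiction and compactness. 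If the pointwise growth fails, a doubling/selection scheme produces rescalings $\tilde\uu_k(y):=M_k^{-1}\uu(x_k+r_ky)$ with $\theta_k:=M_k/r_k^\kappa\to\infty$, uniformly bounded on compacts, and minimizing a rescaled energy with reaction coefficient $\theta_k^{p-2}\to 0$. Passing to the limit (Hölder estimates for minimizers strengthen the weak $W^{1,2}$-convergence), one obtains a bounded harmonic function $\tilde\uu_\infty$ on $\R^n$ with at most $\kappa$-growth and $\tilde\uu_\infty(0)=0$. Since $\kappa<2$, Liouville forces $\tilde\uu_\infty$ to be affine; together with a non-degeneracy / positive-density property of the coincidence set at free boundary points (also a consequence of Weiss), this rules out non-trivial affine limits and contradicts $\sup_{B_1}|\tilde\uu_\infty|=1$.

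For Stage 3, let $x\in K$ with $d:=\dist(x,\Gamma(\uu))>0$. Stage 2 at a nearest free boundary point gives $|\uu|\le Cd^\kappa$ on $B_{d/2}(x)$, hence $|\Delta\uu|\le Cd^{\kappa-2}$ there. The rescaling $\tilde\uu(y):=d^{-\kappa}\uu(x+dy)$ is uniformly bounded in $B_{1/2}$ and solves an elliptic system with bounded right-hand side, so classical Schauder estimates give $\|\tilde\uu\|_{C^{1,\kappa-1}(B_{1/4})}\le C$. Scaling back yields $[\D\uu]_{C^{0,\kappa-1}(B_{d/4}(x))}\le C$ independently of $x$. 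Since $\D\uu\equiv 0$ on $\Gamma(\uu)\cup\{|\uu|=0\}$ by the growth estimate, a standard argument comparing $|x-y|$ to $d(x)$ assembles these local bounds into the global $C^{1,\kappa-1}$-estimate.

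The main obstacle is Stage 2. In the case $p\ge 1$ treated in \cite{AndShaUraWei15}, optimal growth can be reached by first proving Lipschitz regularity via classical elliptic estimates on the scalar function $|\uu|$ (using that $|\uu|$ satisfies a differential inequality with favorably-signed nonlinearity), and then bootstrapping. For $0<p<1$ the monotonicity of $t\mapsto t^{p-1}$ is reversed and the reaction term $|\uu|^{p-2}\uu$ is singular at $\Gamma(\uu)$, so scalar comparison fails and one cannot bootstrap from a preliminary Lipschitz bound. A genuinely variational/compactness route using the energy structure and scale invariance of $E$ is therefore necessary, which is the technical crux of the theorem.
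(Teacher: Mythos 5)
Your Stage 3 contains a genuine gap, and it sits exactly at the feature that makes $0<p<1$ hard. From the upper bound $|\uu|\le Cd^{\ka}$ on $B_{d/2}(x)$ you conclude ``hence $|\Delta\uu|\le Cd^{\ka-2}$'', but $|\Delta\uu|=|\uu|^{p-1}$ with $p-1<0$, so an \emph{upper} bound on $|\uu|$ gives a \emph{lower} bound on $|\Delta\uu|$; the implication is backwards. To bound $|\Delta\uu|$ from above at distance $d$ from $\Gamma(\uu)$ you need the matching lower bound $|\uu|\ge c\,d^{\ka}$ throughout $B_{d/2}(x)$, i.e.\ a dead-core/strong non-degeneracy estimate in the positivity set. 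That is a substantial additional lemma (provable, via comparison using $\Delta|\uu|\ge|\uu|^{p-1}$, but nowhere in your outline), and without it the rescaled Schauder step and the subsequent gradient bound $|\D\uu|\le Cd^{\ka-1}$ used to assemble the global estimate both collapse. Stage 2 is also only a sketch --- the uniform compactness of the contradiction sequence and the positive-density property of $\{|\uu|=0\}$ needed to kill affine limits are asserted rather than proved --- but the structure there is at least standard; the Stage 3 step as written would fail.

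For contrast, the paper's proof needs none of this free-boundary machinery: no $L^\infty$ bound, no Weiss monotonicity, no blow-ups, no growth or non-degeneracy estimates. It compares $\uu$ on $B_r(x_0)$ with its harmonic replacement $\h$ and uses the elementary inequality $|\vv|^p-|\w|^p\le|\vv-\w|^p$ (valid since $0<p<1$) together with Young and Poincar\'e to get $\int_{B_r}|\D(\uu-\h)|^2\le Cr^{\,n+\frac{2p}{2-p}}=Cr^{\,n+2(\ka-1)}$, which is precisely the Campanato exponent for $C^{0,\ka-1}$ regularity of $\D\uu$; a standard iteration (the Han--Lin lemma) and Campanato embedding then finish the proof. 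It is worth internalizing that here the subadditivity of $t\mapsto t^p$ for $p<1$ --- the same feature that breaks the scalar comparison arguments --- is what makes the direct variational route so short.
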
 

We note that for $1 \le p < 2$, applying standard elliptic theory and a bootstrapping argument yields that the solution belongs to $W^{2,q}_{\loc}$ for any $q \in [1, \infty)$.
When $p = 1$, unlike in the classical scalar obstacle problem, it remains an open problem\footnote{It is noteworthy that $\uu$ exhibits quadratic growth from points where $\nabla \uu$ vanishes; see \cite{AndShaUraWei15}.} whether solutions belong to $W^{2,\infty}_{\loc}$.
However, this approach is not applicable when $0 < p < 1$. Instead, inspired by the strategy in \cite{DeSJeoSha22, DeSJeoSha23} for almost minimizers in the range $1 \le p < 2$, we adopt a variational approach to establish the optimal regularity.

\medskip

To state our result on the regularity of the free boundary $\Gamma(\uu)$, we need to define  a slightly unorthodox flatness concept.

\begin{defn}\label{def:flat}(Flatness)
We say $\uu$ is $\bar \eps $-flat if      for some unit directions $e \in \R^n, f\in \R^m$
\be\label{flat} |\uu(x) - u_0(\langle x, e \rangle)f| \leq \bar \eps \quad \text{in $B_1$}
\ee
and
\be\label{nond}
 |\uu| \equiv 0 \quad \text{in $B_1 \cap \{\langle x , e \rangle  < - \bar \eps\}$}.
 \ee
 
\end{defn}


\begin{thm}\label{thm:main}Let $\uu$ be a minimizer to $E(\cdot)$ in $B_1$. There exists a constant $\bar \eps >0$, depending only on $n$, $m$ and $p$, such that if $\uu$ is $\bar \eps$ flat in $B_1$,  
then $\Gamma(\uu)$ is analytic in $B_{1/2}.$
\end{thm}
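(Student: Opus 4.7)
The plan is to split the proof into two stages: a viscosity/linearization argument first upgrades the $\bar\eps$--flatness hypothesis to $C^{1,\al}$ regularity of $\Gamma(\uu)$, using only the equation \eqref{eq:system} and not minimality; then minimality is exploited via a partial hodograph-–Legendre transform to bootstrap $C^{1,\al}$ to analyticity.

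For the first stage, following the De Silva improvement-of-flatness scheme, I assume $\uu$ is $\eps$-flat with directions $e\in\R^n$, $f\in\R^m$ and decompose
\[
\uu(x) = u_0(\langle x,e\rangle)f + \eps\bigl( w(x)\, u_0'(\langle x,e\rangle) f + \h_\perp(x)\bigr),\qquad \h_\perp\perp f.
\]
The heart of the argument is a Harnack-type inequality for the weighted height function $w$ on the positivity set: if $w$ has oscillation $L$ in $B_1(\uu)$ (in the sense compatible with the singular weight), then on $B_{1/2}(\uu)$ its oscillation drops to $(1-c)L$ for universal $c>0$. The proof compares $\uu$ against the one-parameter family of translated profiles $u_0(\langle x,e\rangle + \eps t)f$, which are exact solutions of \eqref{eq:system}, and uses the $C^{1,\ka-1}$ bound from Theorem~\ref{thm:grad-u-holder} to quantify the error. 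Passing $\eps\to 0$, the rescaled $w$ converges locally uniformly to a solution $w_0$ of a linear boundary-value problem in $B_{1/2}\cap\{\langle x,e\rangle>0\}$, of oblique/Neumann type obtained by differentiating the profile equation \eqref{ODE} in the flat direction. Classical $C^{1,\al}$ boundary estimates for this limit problem then yield the improvement of flatness $\eps\mapsto \eta\eps$ at a dyadically smaller scale; iterating gives that $\Gamma(\uu)\cap B_{1/2}$ is a $C^{1,\al}$ graph. The orthogonal component $\h_\perp$ enters the nonlinearity $|\uu|^{p-2}\uu$ only at order $\eps^2$ and is absorbed as a higher-order perturbation in the iteration.

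For the second stage, once $\Gamma(\uu)\cap B_{1/2}$ is a $C^{1,\al}$ graph, I would flatten it by a $C^{1,\al}$ diffeomorphism so that $\uu$ solves a transformed elliptic system in $B_{1/2}^+$ with the boundary behavior $\uu=u_0(x_n)\g(x)+o(x_n^\ka)$ on $B_{1/2}'$, for a smooth profile direction $\g$. Taking the natural unknowns $\g$ and $h(x)=\uu\cdot\g/u_0(x_n)$ (or, equivalently, a partial Legendre transform in $x_n$ in the spirit of Kinderlehrer--Nirenberg--Spruck) converts \eqref{eq:system} and the free boundary condition into a quasilinear elliptic system with fixed Dirichlet data and nonlinearity that is analytic in the unknowns — precisely because $u_0$ is analytic on $(0,\infty)$. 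Minimality then allows one to cast this transformed problem in Euler--Lagrange form, so Schauder bootstrap gives $C^{k,\al}$ for every $k$, and the Morrey--Friedman analytic regularity theory for nonlinear elliptic systems with analytic structure finally upgrades to analyticity of the transformed solution, and hence of $\Gamma(\uu)$.

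The main obstacle is the linearization in the first stage. The profile $u_0(t)=c_p(t^+)^\ka$ vanishes at the free boundary at order $\ka\in(1,2)$, so the natural weight $u_0'$ used to normalize the height function is singular on $\Gamma(\uu)$: the Harnack inequality must therefore be set up in a genuinely degenerate/weighted framework, and the sub-/supersolutions used for comparison must respect this degeneracy. In addition, the vector-valued structure forces one to track simultaneously the translation of $\Gamma(\uu)$ (captured by $w$) and infinitesimal rotations of the profile direction $f$ (captured by $\h_\perp$), and at each dyadic step one must select optimal updated directions $(e_k,f_k)\in \sS^{n-1}\times\sS^{m-1}$ to iterate flatness. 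Once this is carried out cleanly, the passage from $C^{1,\al}$ to analyticity in the second stage is mostly a standard — though nontrivial — hodograph bootstrap.
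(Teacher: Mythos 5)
Your two-stage strategy --- a viscosity/linearization argument giving $C^{1,\al}$ regularity of the flat free boundary, followed by a partial hodograph--Legendre transform for analyticity --- is exactly the paper's, and your weighted height function $w$ (deviation divided by $u_0'$) linearizes to the same degenerate Neumann problem $\Delta\varphi+2(\ka-1)\varphi_n/x_n=0$ that the paper reaches via the normalization $(u_0^{-1}(|\uu|)-x_n)/\eps$. There are, however, two concrete gaps. First, the orthogonal component $\h_\perp$ cannot be ``absorbed as a higher-order perturbation.'' If $|\h_\perp|\sim\eps$, its contribution to $|\uu|$ is of size $|\h_\perp|^2/u_0(\langle x,e\rangle)\sim\eps^2\langle x,e\rangle^{-\ka}$, which is not uniformly $O(\eps^2)$ and degenerates precisely at $\Gamma(\uu)$, where the nonlinearity $|\uu|^{p-2}\uu$ is also singular. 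The paper must impose the stronger normalization $|u^i|\le\eps^{3/4}$ for $i\ge2$, prove that these components in fact decay linearly in the distance to $\{x_n=-\eps\}$ (Lemma~\ref{54}, via boundary Harnack), and send them to a \emph{separate} degenerate linearized equation (with $s=2\ka$ rather than $s=2(\ka-1)$) whose Lipschitz estimate drives the update of the direction $f$; without this bookkeeping the dyadic iteration does not close. Relatedly, the Harnack step cannot be run by comparison with exact planar translates $u_0(\langle x,e\rangle+\eps t)f$: one needs strict, curved super/subsolutions (built on balls of radius $\sim C_0/\eps$), a regularized strictly positive family to rule out touching on the free boundary, and a quantitative bound $u^1>-\eps^3$ on the negative part of $u^1$ (from subharmonicity in a thin slab) to legitimize the subsolution comparison in the vector-valued setting.

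Second, in the analyticity stage you posit the expansion $\uu=u_0(x_n)\g+o(x_n^\ka)$ after flattening, but establishing it is the main missing step: the paper proves a Weiss-type monotonicity formula, classifies homogeneous blow-ups vanishing on a half-space as elements of $\Hf$, and shows uniqueness of the blow-up $(e_0,f_0)$ together with convergence of rescalings centered at nearby interior points; only then is the hodograph--Legendre map well defined and invertible near $\Gamma(\uu)$. Moreover, the transformed system is a perturbation of the degenerate operator $x_n\Delta+\gamma\partial_n$, so a plain Schauder bootstrap in $C^{k,\al}$ does not apply; one needs the analytic implicit function theorem in the weighted framework of Fotouhi--Koch, which is the route the paper takes.
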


For $1\le p<2$,  the authors of \cite{AndShaUraWei15, FotShaWei21} obtained the $C^{1,\alpha}$-regularity of the ``regular'' set using an epiperimetric inequality approach. Later, \cite{FotKoc24} improved this result from $C^{1,\alpha}$ to analyticity by applying the partial hodograph-Legendre transformation. Although the conditions on the regular free boundary points in \cite{AndShaUraWei15, FotShaWei21} are formulated differently from our flatness assumption in Theorem~\ref{thm:main}, both essentially consider asymptotically one-phase points, where the blow-ups become half-space solutions (see \eqref{eq:half-space-sol}). 
However, in the case $0 < p < 1$, proving the epiperimetric inequality involves significant technical difficulties. To overcome this, we take a different approach - we use a linearization technique, first applied to free boundary problems in \cite{DeS11}, to show that flat free boundaries are $C^{1,\alpha}$. For this purpose, we need to introduce an ad hoc notion of a viscosity solution to our system, as described in the next subsection.

Our final goal is to prove that the flat free boundary is analytic. In the case of scalar equations, \cite{ResRos24} recently showed that once the free boundary is $C^{1,\alpha}$, it is actually $C^\infty$. Their strategy involved analyzing a linearized operator for the derivatives of the solution, obtaining fine regularity estimates, and then applying a bootstrapping argument to improve the regularity from $C^{1,\alpha}$ to $C^\infty$. However, their method relied on a comparison principle for the linearized equation, which does not seem to be available for our system. Instead, we follow the approach in \cite{FotKoc24}, where the authors proved the analyticity of the regular set for the system \eqref{eq:system} in the range $1\le p<2$.

\subsection{Viscosity solutions}\label{sec:visco}

Parallel to the minimizers, one can introduce the slightly unorthodox concept of viscosity solutions for the superlinear system associated to $E(\cdot)$. 
The notion of a viscosity solution to a free boundary problem is well understood and amply utilized in the scalar setting when the existence of such solutions can be achieved via Perron's method. In the case of systems, despite the lack of a maximum principle, an ad hoc viscosity notion turns out to still be effective in the ``flat" regime. In this perturbative case, one roughly reduces to the scalar setting, as one component of a solution is a supersolution of a scalar problem and the modulus of a solution is a subsolution of a scalar problem. Thus, since this notion is satisfied by minimizers, it is well suited for our purpose of analyzing the question of regularity of the flat part of the minimizing free boundary.

\begin{defn}\label{solutionnew} For a bounded domain $\Omega\subset\R^n$, we say that $\uu \in C(\Omega; \R^m)$ is a viscosity solution to 
\begin{equation}\label{VOP} \Delta \uu = |\uu| ^{p-2} \uu \quad \text{in $\Omega$},\end{equation} if
\be\label{int1}\Delta u^i=|\uu|^{p-2}u^i \quad \text{in $\Omega(\uu):= \Omega \cap \{|\uu|>0\}$}, \quad \forall i=1,\ldots,m;\ee
and the free boundary condition on 
$$\Gamma(\uu):= \Omega \cap \p \Omega(\uu)$$
is satisfied in the following sense: given $x_0 \in \Gamma(\uu)$ and any  $f \in \R^m$,
$\langle \uu, f\rangle$ cannot be touched by below at $x_0$ by a test function $\varphi \in C^{1}$ which satisfies $$\varphi(x_0)=0, \quad |\nabla \varphi|(x_0) \neq 0$$ and in $\{\varphi \neq 0\},$
$$\varphi \in C^2, \quad \Delta \varphi > \varphi^{p-1} \chi_{\{\varphi>0\}}.$$
\end{defn}
We remark that, by interior elliptic regularity for the equations \eqref{int1}, we deduce that $\uu \in C^\infty(\Omega(\uu)).$

Also, \eqref{int1} implies that $u_f:= \langle\uu, f\rangle$ satisfies \begin{equation}\label{super}
\Delta u_f \leq u_f^{p-1}\chi_{\{u_f>0\}} \quad \text{in $\Omega(\uu)$}\end{equation} for any unit vector $f \in \R^m$. This, combined with
the free boundary condition, can be reformulated as 
\begin{equation}\label{super1}
\Delta u_f \leq u_f^{p-1}\chi_{\{u_f>0\}} \quad \text{in $\Omega(\uu) \cup \Gamma(\uu)$,}\end{equation} where the condition on the free boundary is understood in the viscosity sense.

\begin{rem}\label{notouch} Notice that a function $\varphi$ as in the definition above, cannot touch $u_f$ by below in $\Omega(\uu)$. This is obvious on $\{\varphi \neq 0\}$. If $\varphi$ touches $u_f$ at $x_0 \in \{\varphi =0\}$, then since $u_f$ is smooth near $x_0$, $|\nabla u_f(x_0)|=|\nabla \varphi(x_0)|$ and we contradict Hopf's lemma applied to $u_f-\varphi$ in the set $\{u_f<0\}.$
\end{rem}



\begin{rem} We remark that
\be\label{sub2} \Delta |\uu| \geq |\uu|^{p-1} \quad \text{in $\Omega(\uu)$.}\ee
Indeed, given $x_0 \in \Omega(\uu)$, we can assume that $u^1(x_0)= |\uu(x_0)|$. Then,
$$\Delta |\uu|(x_0) \geq \Delta u^1(x_0)= |\uu|^{p-2}u^1(x_0)= |\uu|^{p-1}(x_0).$$
 \end{rem}

Our main theorem for viscosity solutions is as follows. 

\begin{thm}\label{main}
Let $\uu$ be a viscosity solution to \eqref{VOP} in $B_1$. There exists a universal constant $\bar \eps >0$ such that if $\uu$ is $\bar \eps$ flat in $B_1$, 
then $\Gamma(\uu) \in C^{1,\alpha}$ in $B_{1/2}.$
\end{thm}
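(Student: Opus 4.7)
The plan is to follow the linearization and improvement-of-flatness scheme pioneered in \cite{DeS11}, adapted to the superlinear system \eqref{VOP} and to the viscosity notion of Definition \ref{solutionnew}. The backbone is a single dyadic improvement lemma: there exist universal constants $\rho \in (0,1/2)$ and $\eps_0 > 0$ such that if $\uu$ is $\eps$-flat in $B_1$ with respect to $(e,f)$ for some $\eps \le \eps_0$, then $\uu$ is $(\rho\eps/2)$-flat in $B_\rho$ with respect to a new pair $(e',f')$ satisfying $|e-e'|+|f-f'|\le C\eps$. Iterating at dyadic scales $\rho^k$ yields a unit normal to $\Gamma(\uu)$ with geometric (hence Hölder) modulus of continuity, so that $\Gamma(\uu) \in C^{1,\alpha}$ in $B_{1/2}$ by the standard argument.

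The improvement of flatness is obtained by contradiction and compactness. Assuming a sequence $\uu_k$ is $\eps_k$-flat with $\eps_k \to 0$ but fails to improve, after rotation we align the flat direction with $(e_n, f^1)$ and consider the normalized graphs
\begin{equation*}
\tilde v_k^1(x) := \frac{\langle \uu_k(x), f^1 \rangle - u_0(x_n)}{\eps_k}, \qquad \tilde v_k^i(x) := \frac{\langle \uu_k(x), f^i \rangle}{\eps_k} \quad (i\ne 1),
\end{equation*}
defined on $B_1(\uu_k)$. The goal is to show that, along a subsequence, $(\tilde v_k^j)$ Hausdorff-converges to a Hölder function $v^j$ on $B_{1/2}^+$; that the limit $v=(v^j)$ solves a linearized problem; and that a $C^{1,\alpha}$-at-the-origin/Liouville estimate for this problem yields a vector $\ell \in \R^{n-1}$ with $v^1(x) \simeq \langle x', \ell \rangle$ in $B_\rho^+$. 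Unwinding the rescaling, $\ell$ then gives the improved direction $e'$ and contradicts the assumed failure.

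The compactness step rests on a partial Harnack inequality tailored to Definition \ref{solutionnew}. By \eqref{super1}, for every unit $f \in \R^m$, $u_f = \langle \uu, f \rangle$ is a scalar supersolution of the Alt--Phillips equation on $\Omega(\uu) \cup \Gamma(\uu)$, while by \eqref{sub2} the modulus $|\uu|$ is a scalar subsolution on $\Omega(\uu)$. These one-sided comparisons allow us to slide the one-dimensional profile $s \mapsto u_0(\langle x, e\rangle + s)$ from above and below across the flat slab and, at a scale controlled by $\eps$, shrink the slab thickness by a definite factor, in the spirit of \cite{DeS11}. Iterating the resulting dichotomy gives the uniform Hölder estimate on $\tilde v_k$ required to pass to the limit. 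To identify the linearized equations, we Taylor-expand $|\uu|^{p-2}\uu$ around $u_0(x_n)f^1$ and use $u_0''=u_0^{p-1}$ together with the identity $u_0^{p-2}(x_n)=c_p^{p-2}x_n^{-2}$ (since $\ka(p-2)=-2$), which formally yields
\begin{align*}
\Delta v^1 - (p-1)\,c_p^{p-2}\,x_n^{-2}\,v^1 &= 0,\\
\Delta v^i - c_p^{p-2}\,x_n^{-2}\,v^i &= 0 \quad (i \neq 1),
\end{align*}
in $B_{1/2}^+$, coupled with a Neumann-type condition on $\{x_n=0\}$ inherited from the viscosity free boundary condition. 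The two chief obstacles I expect are: first, proving the partial Harnack inequality in spite of the components of $\uu$ being coupled through the singular factor $|\uu|^{p-2}$, which prevents a purely scalar reduction; and second, making precise the boundary condition on $\{x_n=0\}$ for $v$, in the presence of the Hardy-type singular weight $x_n^{-2}$, so as to obtain the $C^{1,\alpha}$-at-the-origin estimate for the linearized system that drives the improvement step.
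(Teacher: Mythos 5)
Your overall strategy (De Silva-type linearization: partial Harnack, compactness, linearized degenerate problem, improvement of flatness) is exactly the route the paper takes, but as written the proposal has gaps that would prevent it from closing. The most consequential one is the choice of normalization. You linearize \emph{vertically}, setting $\tilde v^1_k=(\langle\uu_k,f^1\rangle-u_0(x_n))/\eps_k$. Near the free boundary the genuine perturbation is a horizontal shift of the profile, $u^1\approx u_0(x_n+\eps\varphi)$, so $\tilde v^1_k\approx u_0'(x_n)\varphi\sim x_n^{\ka-1}\varphi$ degenerates as $x_n\to0$: your limit $v^1$ vanishes identically on $\{x_n=0\}$ and carries no first-order information about the location of $\Gamma(\uu)$ there. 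To extract the improved normal you would still have to divide by $u_0'(x_n)$ and prove a $C^{1,\alpha}$ boundary estimate for the quotient, i.e.\ precisely the analysis of the operator $\Delta+\tfrac{2(\ka-1)}{x_n}\partial_n$ with its Neumann-type condition on $\{x_n=0\}$. The paper avoids this by working from the start with the horizontal quantities $\widetilde u^1=(u_0^{-1}((u^1)^+)-x_n)/\eps$ and $\widetilde{|\uu|}=(u_0^{-1}(|\uu|)-x_n)/\eps$, whose limits stay bounded up to $\{x_n=0\}$ and directly encode the free boundary; the needed $C^{1,\sigma}$ estimate for the limiting degenerate problem is then imported from De Silva--Savin. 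You correctly flag this boundary condition as an ``obstacle you expect,'' but it is not an expected difficulty to be dealt with later --- it is the core of the proof, and your formulation makes it strictly harder.

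Two further points are missing and are not cosmetic. First, the partial Harnack for the system cannot be run with the scalar sub/supersolution facts alone: on the upper-barrier side one compares $|\uu|$ with a supersolution, but on the lower-barrier side one compares $u^1$, and one must (i) show $|\uu|=u^1+O(\eps^{5/4})$ where $u^1$ is bounded below, which forces a \emph{separate, stronger} smallness hypothesis on the transverse components (the paper's $|u^i|\le r^\ka((b_0-a_0)/r)^{5/8}$), and (ii) control the negative part of $u^1$: flatness only gives $u^1\ge-\eps$, while the sliding of the lower barrier (which is negative outside its zero set) requires $u^1>-\eps^3$; this is obtained from the exponential decay of nonnegative subharmonic functions supported in an $\eps$-slab (Lemma~\ref{lem:subhar} in the paper). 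Second, for a system the transverse components must be propagated through the iteration at their own rate: the paper normalizes $u^i$ by $\eps_k^{3/4}$ (not $\eps_k$) and proves the separate improvement $|\uu-\langle\uu,\bar f\rangle\bar f|\le(\eps/2)^{3/4}r^\ka$, which is what allows the hypothesis of the Harnack inequality to be verified at every scale. Your uniform $\eps_k^{-1}$ normalization of all components does not reproduce this bookkeeping, and without it the induction does not close.
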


Viscosity solutions include minimizers of our energy functional. Precisely,

\begin{prop}\label{prop:min-vis} Let $\uu$ be a minimizer to $E(\cdot)$ in $\Omega$. Then $\uu$
 is a viscosity solution to \eqref{VOP}.
\end{prop}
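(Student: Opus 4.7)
The plan is to split the verification into the interior equation \eqref{int1} and the free--boundary condition of Definition~\ref{solutionnew}. The interior equation will be obtained by a standard first--variation argument: testing $E(\cdot)$ against inner perturbations $\uu+t\psi$ with $\psi\in C_c^\infty(\Omega(\uu);\R^m)$, on whose support $|\uu|$ is bounded below so that the potential is smooth, produces $\Delta\uu=|\uu|^{p-2}\uu$ componentwise on $\Omega(\uu)$.

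For the free--boundary condition I would argue by contradiction. Suppose a test function $\varphi$ as in Definition~\ref{solutionnew} touches $u_f:=\langle\uu,f\rangle$ from below at some $x_0\in\Gamma(\uu)$. On $D:=\{\varphi>0\}\cap U$ for a small neighborhood $U$ of $x_0$ one has $u_f\ge\varphi>0$, so $D\subset\Omega(\uu)$ and $u_f$ is smooth there. Combining the subsolution property \eqref{super} (a consequence of \eqref{int1} together with $|\uu|\ge u_f$) with the strict subsolution property of $\varphi$ and the monotonicity reversal $u_f\ge\varphi>0\Rightarrow u_f^{p-1}\le\varphi^{p-1}$ (since $p-1<0$) yields
\[
\Delta(u_f-\varphi)\le u_f^{p-1}-\Delta\varphi<u_f^{p-1}-\varphi^{p-1}\le 0\quad\text{in }D.
\]
Strict negativity of this Laplacian rules out any interior minimum of $w:=u_f-\varphi$ on $D$, so $w>0$ throughout $D$ while $w(x_0)=0$ at $x_0\in\partial D$. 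Since $|\nabla\varphi(x_0)|\ne 0$, the hypersurface $\partial D=\{\varphi=0\}$ is $C^1$ near $x_0$ with the interior sphere property, and Hopf's lemma produces
\[
\nabla u_f(x_0)\cdot e>|\nabla\varphi(x_0)|,\qquad e:=\nabla\varphi(x_0)/|\nabla\varphi(x_0)|.
\]

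The optimal regularity $\uu\in C^{1,\ka-1}$ from Theorem~\ref{thm:grad-u-holder} ensures that $u_f$ is $C^1$ at $x_0$, and the touching of the two $C^1$ functions $\varphi\le u_f$ at $x_0$ with $\varphi(x_0)=u_f(x_0)=0$ forces the gradient--matching $\nabla u_f(x_0)=\nabla\varphi(x_0)$. Consequently $\nabla u_f(x_0)\cdot e=|\nabla\varphi(x_0)|$, directly contradicting the strict Hopf inequality.

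The main obstacle I expect is the careful setup of the strict superharmonicity of $w=u_f-\varphi$ on $D$: it rests on combining the two opposite--direction inequalities $\Delta u_f\le u_f^{p-1}$ and $\Delta\varphi>\varphi^{p-1}$ together with the monotonicity reversal of $t\mapsto t^{p-1}$ on the positive axis, and one must verify that $w$ is genuinely $C^1$ up to $\partial D$ at $x_0$---which is delivered by Theorem~\ref{thm:grad-u-holder}---to legitimately invoke Hopf's lemma.
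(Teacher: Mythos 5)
Your strategy is essentially the paper's: the interior system comes from the first variation, and the free-boundary condition is reduced---via the gradient matching $\nabla u_f(x_0)=\nabla\varphi(x_0)$ supplied by the $C^{1,\ka-1}$ regularity of Theorem~\ref{thm:grad-u-holder}---to a contradiction with Hopf's lemma for the strictly superharmonic function $w=u_f-\varphi$. This is exactly the argument the paper invokes by pointing to Remark~\ref{notouch}.

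The one step that does not hold as written is the claim that $\partial D=\{\varphi=0\}$ ``is $C^1$ near $x_0$ with the interior sphere property.'' Definition~\ref{solutionnew} only assumes $\varphi\in C^1$ across its zero level set ($\varphi$ is $C^2$ only in $\{\varphi\neq0\}$, which gives no control on the tangency of balls at $\{\varphi=0\}$), and a merely $C^1$ hypersurface need not admit an interior tangent ball; the Hopf boundary point lemma can genuinely fail in $C^1$ (non-Dini) domains. The paper sidesteps this by applying Hopf's lemma in the set $\{u_f<0\}$ instead of $\{\varphi>0\}$: there $\{u_f<0\}\subset\Omega(\uu)$, so $\Delta u_f\le 0$ and $\Delta\varphi>0$ give $\Delta w<0$ without even needing the monotonicity of $t\mapsto t^{p-1}$, and---this is precisely why the optimal regularity is quoted---the boundary $\{u_f=0\}$ is a $C^{1,\ka-1}$ hypersurface near $x_0$ (implicit function theorem, using $\nabla u_f(x_0)=\nabla\varphi(x_0)\neq0$), for which the Hopf--Oleinik lemma is valid since $t^{\ka-1}$ is a Dini modulus. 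Your proof is repaired either by making the same switch of domain or by otherwise producing an interior tangent ball for $\{\varphi>0\}$ at $x_0$; the rest of your computation (in particular the inequality $u_f^{p-1}\le\varphi^{p-1}$ from $p-1<0$) is correct.
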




The proof of this Proposition follows as in Remark \ref{notouch}, given that minimizers are $C^{1,\ka-1}$. 
Theorem \ref{main} and Proposition \ref{prop:min-vis} imply the $C^{1,\alpha}$ regularity of minimizing free boundaries and we are left with the question of analiticity to conclude the proof of Theorem \ref{thm:main}.



\subsection{Structure of the paper}
The paper is organized as follows. In Section~\ref{sec:reg}, we obtain the optimal regularity of minimizers. Sections ~\ref{sec:Harnack}-~\ref{sec:flat} are devoted to the proof of our central result, the $C^{1,\alpha}$-regularity of flat free boundaries. Specifically, Section~\ref{sec:Harnack} contains the proof of a Harnack type inequality, Theorem~\ref{holder}. In Section~\ref{sec:flat} we prove the improvement of flatness property for viscosity solutions and establish the regularity of flat free boundary. Finally, in Section~\ref{sec:analy} we extend the $C^{1,\alpha}$-regularity to the analyticity.


\section{Optimal regularity of minimizers}\label{sec:reg}
The objective of this section is to establish the optimal  $C^{1,\ka-1}$ regularity of minimizers. We start by introducing some notation related to energy functional.

For $p\in(0,1)$, $B_r(x_0)\Subset B_1$ and $\uu\in W^{1,2}(B_r(x_0);\R^m)$, we set
\begin{align*}
&E(\uu,x_0,r):=\int_{B_r(x_0)}\left(|\D\uu|^2+\frac{2}{p}|\uu|^p\right),\\
&\tilde E(\uu,x_0,r):=\int_{B_r(x_0)}\left(|\D\uu|^2+\frac{2}{p}|\uu|^2\right).
\end{align*}
When $x_0=0$, we simply write $E(\uu,0,r)=E(\uu,r)$ and $\tilde E(\uu,0,r)=\tilde E(\uu,r)$.
By using 
Hölder's inequality and interpolation inequality, it is easily seen that
\begin{align*}
&E(\uu,1)\le\tilde E(\uu,1)+\tilde E(\uu,1)^{p/2},\\
&\tilde E(\uu,1)\le C(n,p)\left(E(\uu,1)+E(\uu,1)^{2/p}\right).
\end{align*}

\begin{lem}
For $0<p<1$, there exists a constant $C_0=C_0(n,p)>0$ such that \begin{align}
    \label{eq:v-w-p-est}
    \int_{B_r(x_0)}\left(|\vv|^p-|\w|^p\right)\le \frac{p}{4}\int_{B_r(x_0)}|\D(\vv-\w)|^2+C_0r^{n+\frac{2p}{2-p}},
\end{align}
for any ball $B_r(x_0)\subset \R^n$ and for any functions $\vv$, $\w\in W^{1,2}(B_r(x_0);\R^m)$ with $\vv=\w$ on $\p B_r(x_0)$.
\end{lem}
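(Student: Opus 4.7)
The plan is to reduce the estimate to standard inequalities via a pointwise bound. The key observation is that since $0<p<1$, the function $t\mapsto t^p$ on $[0,\infty)$ is concave and vanishes at $0$, hence subadditive: $(a+b)^p\le a^p+b^p$ for $a,b\ge 0$. Combined with the reverse triangle inequality $||\vv|-|\w||\le |\vv-\w|$, this yields the pointwise bound
$$
|\vv|^p-|\w|^p \;\le\; \bigl||\vv|-|\w|\bigr|^p \;\le\; |\vv-\w|^p.
$$

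Integrating and applying Hölder's inequality with exponents $2/p$ and $2/(2-p)$, I would get
$$
\int_{B_r(x_0)}|\vv-\w|^p \;\le\; |B_r(x_0)|^{1-p/2}\left(\int_{B_r(x_0)}|\vv-\w|^2\right)^{p/2}.
$$
Since $\vv=\w$ on $\partial B_r(x_0)$, the Poincaré inequality gives $\int_{B_r(x_0)}|\vv-\w|^2\le C(n)\,r^2\int_{B_r(x_0)}|\D(\vv-\w)|^2$. Chaining these,
$$
\int_{B_r(x_0)}(|\vv|^p-|\w|^p) \;\le\; C\,r^{n(1-p/2)+p}\left(\int_{B_r(x_0)}|\D(\vv-\w)|^2\right)^{p/2}.
$$

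The final step is a careful application of Young's inequality $XY\le \eps X^{2/p}+C_\eps Y^{2/(2-p)}$, with the choice
$$
X=\left(\int_{B_r(x_0)}|\D(\vv-\w)|^2\right)^{p/2},\qquad Y=C\,r^{n(1-p/2)+p},\qquad \eps=\frac{p}{4},
$$
so that $X^{2/p}=\int_{B_r(x_0)}|\D(\vv-\w)|^2$ and $Y^{2/(2-p)}$ becomes the desired power of $r$. The exponent arithmetic is the only computation to check:
$$
\bigl(n(1-p/2)+p\bigr)\cdot\frac{2}{2-p}=\frac{n(2-p)+2p}{2-p}=n+\frac{2p}{2-p},
$$
which matches the claim exactly. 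This is clearly the ``hinge'' of the proof, and the fact that the Young-inequality exponents generate precisely the scaling exponent $n+2p/(2-p)$ (essentially $n+p\kappa$, the natural scaling of the problem) is the real content of the lemma.

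There is no genuine obstacle in the argument, only the bookkeeping of constants; the only conceptual ingredient is the subadditivity of $t^p$ for $0<p<1$, which is what allows the pointwise control. I would note that both the use of concavity and the resulting exponent $n+2p/(2-p)$ are specific to the superlinear regime $0<p<1$ of interest in the paper.
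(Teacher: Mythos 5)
Your proof is correct and follows essentially the same route as the paper: the pointwise subadditivity bound $|\vv|^p-|\w|^p\le|\vv-\w|^p$, followed by Poincar\'e and a Young-type absorption with the scaling exponent $n+\frac{2p}{2-p}$. The only (immaterial) difference is that the paper applies Young's inequality pointwise before integrating and invoking Poincar\'e, whereas you apply H\"older and Young at the level of the integrals.
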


\begin{proof}
For notational simplicity, we may assume $x_0=0$. Since $0<p<1$, we have that for any $a>0$, $b>0$, there holds $(a+b)^p\le a^p+b^p$ (see e.g. Lemma 2.5 in \cite{LeiDeQTei15}), which implies that $|\vv|^p-|\w|^p\le |\vv-\w|^p$. This, combined with Young's inequality and Poincare's inequality, yields that for any $\e\in(0,1)$,
\begin{align*}
    \int_{B_r}\left(|\vv|^p-|\w|^p\right)&\le \int_{B_r}|\vv-\w|^p\\
    &\le\int_{B_r}\left(\e\left(r^{-p}|\vv-\w|^p\right)^{\frac2p}+C(\e)\left(r^p\right)^{\frac2{2-p}}\right)\\
    &=\e r^{-2}\int_{B_r}|\vv-\w|^2+C(n,\e)r^{n+\frac{2p}{2-p}}\\
    &\le\e C_1(n)\int_{B_r}|\D(\vv-\w)|^2+C(n,\e)r^{n+\frac{2p}{2-p}}.
\end{align*}
Now, \eqref{eq:v-w-p-est} follows by taking $\e=\e(n,p)$ small.
\end{proof}

\begin{prop}\label{prop:Mor-est}
Let $\uu$ be a minimizer in $B_1$. There exists a universal constant $C_0>0$, such that \begin{align}\label{eq:Mor-est}
   \tilde E(\uu,x_0,\rho)\le C_0\left[\left(\rho/ r\right)^n\tilde E(\uu,x_0,r)+r^n\right] ,
\end{align}
for any $x_0\in B_{1/2}$ and $0<\rho<r<1/2$.
\end{prop}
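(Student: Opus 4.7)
\smallskip

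\textbf{Plan.} The estimate is a standard Morrey-type decay, and the natural approach is harmonic replacement together with the $L^p$ absorption afforded by \eqref{eq:v-w-p-est}. Fix $x_0\in B_{1/2}$ and $0<\rho<r<1/2$, and let $\h\in W^{1,2}(B_r(x_0);\R^m)$ be the (componentwise) harmonic replacement of $\uu$ on $B_r(x_0)$, so that $\h=\uu$ on $\partial B_r(x_0)$ and $\Delta\h=0$ in $B_r(x_0)$. Since the extension of $\h$ by $\uu$ outside $B_r(x_0)$ is an admissible competitor, the minimality of $\uu$ gives $E(\uu,x_0,r)\le E(\h,x_0,r)$. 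Using the orthogonality $\int|\D\uu|^2=\int|\D\h|^2+\int|\D(\uu-\h)|^2$ (valid since $\uu-\h\in W^{1,2}_0$ and $\h$ is harmonic), this rearranges to
\[
\int_{B_r(x_0)}|\D(\uu-\h)|^2\le \frac{2}{p}\int_{B_r(x_0)}\bigl(|\h|^p-|\uu|^p\bigr).
\]

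\smallskip

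\textbf{Step 1 (Gradient decay).} I would then apply \eqref{eq:v-w-p-est} with $\vv=\h$, $\w=\uu$ and absorb the resulting $\tfrac{p}{4}\int|\D(\uu-\h)|^2$ term to the left-hand side, obtaining
\[
\int_{B_r(x_0)}|\D(\uu-\h)|^2\le C\,r^{n+\frac{2p}{2-p}}\le C\,r^n.
\]
Combined with the standard decay $\int_{B_\rho}|\D\h|^2\le C(n)(\rho/r)^n\int_{B_r}|\D\h|^2$ for harmonic functions and $\int_{B_r}|\D\h|^2\le\int_{B_r}|\D\uu|^2$, the triangle inequality gives
\[
\int_{B_\rho(x_0)}|\D\uu|^2\le C(\rho/r)^n\int_{B_r(x_0)}|\D\uu|^2+C\,r^n.
\]

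\smallskip

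\textbf{Step 2 ($L^2$ decay).} For the $|\uu|^2$ piece, I would invoke the subharmonicity of $|\h|^2$, which yields the familiar $L^\infty$--$L^2$ bound for harmonic functions and hence
$$\int_{B_\rho(x_0)}|\h|^2\le C(\rho/r)^n\int_{B_r(x_0)}|\h|^2$$
(trivial when $\rho\in(r/2,r)$, by pointwise control when $\rho\le r/2$). To switch $\h$ back to $\uu$ on the right, I would use Poincaré on $\uu-\h\in W^{1,2}_0(B_r(x_0))$ together with Step 1:
\[
\int_{B_r(x_0)}|\uu-\h|^2\le C r^2\int_{B_r(x_0)}|\D(\uu-\h)|^2\le C\,r^{n+2}.
\]
Triangulating in $|\uu|^2$ and using $r<1$ then gives $\int_{B_\rho}|\uu|^2\le C(\rho/r)^n\int_{B_r}|\uu|^2+C\,r^n$.

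\smallskip

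\textbf{Step 3 (Conclusion).} Adding the bounds from Steps 1 and 2 (weighting the latter by $2/p$) yields \eqref{eq:Mor-est}. The only mildly delicate point is the absorption in Step 1, which is already prepared by the careful choice of the constant $p/4$ in \eqref{eq:v-w-p-est}; the rest is standard harmonic-comparison and Poincaré bookkeeping. No genuine obstacle is expected.
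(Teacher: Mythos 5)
Your proposal is correct and follows essentially the same route as the paper: harmonic replacement, minimality plus the Dirichlet orthogonality identity, absorption via \eqref{eq:v-w-p-est}, Poincar\'e, and the sub-mean-value decay for $|\D\h|^2$ and $|\h|^2$. The only cosmetic difference is that you replace $r^{n+\frac{2p}{2-p}}$ by $r^n$ immediately, whereas the paper keeps the sharper exponent (which it needs later anyway in the proof of Theorem~\ref{thm:grad-u-holder}); for this Proposition that makes no difference.
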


\begin{proof}
Without loss of generality, we may assume $x_0=0$. Let $\h\in W^{1,2}(B_r;\R^m)$ be a harmonic function with $\h=\uu$ on $\p B_r$. Then, from the harmonicity of $\h$, $$
\int_{B_r}\mean{\D\h,\D(\h-\uu)}=0.
$$
Combining this with \eqref{eq:v-w-p-est}, we deduce that for $0<r<1/2$ \begin{align*}
    \int_{B_r}|\D(\uu-\h)|^2
    &=\int_{B_r}|\D\uu|^2-|\D\h|^2\le\frac2p\int_{B_r}\left(|\h|^p-|\uu|^p\right)\\
    &\le \frac12\int_{B_r}|\D(\uu-\h)|^2+C(n,p)r^{n+\frac{2p}{2-p}}.
\end{align*}
Applying Poincare's inequality, we further have
\begin{align}
    \label{eq:u-h-est}
    \int_{B_r}\left(|\D(\uu-\h)|^2+|\uu-\h|^2\right)\le Cr^{n+\frac{2p}{2-p}}.
\end{align}
Moreover, we note that $|\h|^2$ and $|\D\h|^2$ are subharmonic, thus satisfy the following sub-mean value property 
\begin{align*}
\int_{B_\rho}\left(|\D\h|^2+\frac2p|\h|^2\right)\le\left(\frac\rho r\right)^n\int_{B_r}\left(|\D\h|^2+\frac2p|\h|^2\right),\quad0<\rho<r.
\end{align*}
We combine this inequality with \eqref{eq:u-h-est} to conclude that \begin{align*}
    &\int_{B_\rho}\left(|\D\uu|^2+\frac2p|\uu|^2\right)\\
    &\qquad\le 2\int_{B_\rho}\left(|\D \h|^2+\frac2p|\h|^2+|\D(\uu-\h)|^2+\frac2p|\uu-\h|^2\right)\\
    &\qquad \le 2(\rho/r)^n\int_{B_r}\left(|\D\h|^2+\frac2p|\h|^2\right)+2\int_{B_\rho}\left(|\D(\uu-\h)|^2+\frac2p|\uu-\h|^2\right)\\
    &\qquad\le 4(\rho/r)^n\int_{B_r}\left(|\D \uu|^2+\frac2p|\uu|^2\right)+6\int_{B_r}\left(|\D(\uu-\h)|^2+\frac2p|\uu-\h|^2\right)\\
    &\qquad\le 4(\rho/r)^n\int_{B_r}\left(|\D \uu|^2+\frac2p|\uu|^2\right)+Cr^{n+\frac{2p}{2-p}}.\qedhere
\end{align*}
\end{proof}

From here, we deduce the almost Lipschitz regularity of $\uu$ with the help of the following lemma, whose proof can be found in \cite{HanLin97}.

\begin{lem}\label{lem:HL}
  Let $r_0>0$ be a positive number and let
  $\vp:(0,r_0)\to (0, \infty)$ be a nondecreasing function. Let $a$,
  $\beta$, and $\gamma$ be such that $a>0$, $\gamma >\beta >0$. There
  exist two positive numbers $\e=\e(a,\gamma,\beta)$,
  $c=c(a,\gamma,\beta)$ such that, if
$$
\vp(\rho)\le
a\Bigl[\Bigl(\frac{\rho}{r}\Bigr)^{\gamma}+\e\Bigr]\vp(r)+b\, r^{\beta}
$$ for all $\rho$, $r$ with $0<\rho\leq r<r_0$, where $b\ge 0$,
then one also has, still for $0<\rho<r<r_0$,
$$
\vp(\rho)\le
c\Bigl[\Bigl(\frac{\rho}{r}\Bigr)^{\beta}\vp(r)+b\rho^{\beta}\Bigr].
$$
\end{lem}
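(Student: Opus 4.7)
The plan is to apply the hypothesis at geometrically decreasing radii $\tau^k r$ and then extend to arbitrary $\rho\in(0,r)$ using the monotonicity of $\vp$. This is the standard Campanato/Giaquinta-type iteration scheme.

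First, I would exploit $\gamma>\beta$ to fix $\tau=\tau(a,\beta,\gamma)\in(0,1)$ small enough that $a\tau^{\gamma-\beta}\le\tfrac14$, equivalently $a\tau^{\gamma}\le\tfrac14\tau^{\beta}$. With this $\tau$ fixed, I would then choose $\e:=\tau^{\beta}/(4a)$, so that $a\e=\tfrac14\tau^{\beta}$ as well. Plugging $\rho=\tau r$ into the hypothesis yields the clean one-step inequality
\[
\vp(\tau r)\le a(\tau^{\gamma}+\e)\vp(r)+b r^{\beta}\le \tfrac12\tau^{\beta}\vp(r)+b r^{\beta},\qquad 0<r<r_0.
\]

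Second, I would iterate. Applying the one-step inequality at radius $\tau^k r$ (permitted since $\tau^k r\le r<r_0$) gives
\[
\vp(\tau^{k+1}r)\le \tfrac12\tau^{\beta}\vp(\tau^k r)+b\,\tau^{k\beta}r^{\beta}.
\]
Setting $A_k:=\vp(\tau^k r)/\tau^{k\beta}$ and dividing by $\tau^{(k+1)\beta}$ transforms this into $A_{k+1}\le \tfrac12 A_k+\tau^{-\beta}b r^{\beta}$. An easy induction then gives $A_k\le A_0+2\tau^{-\beta}b r^{\beta}$ for every $k\in\N$, i.e.,
\[
\vp(\tau^k r)\le \tau^{k\beta}\vp(r)+2\tau^{(k-1)\beta}\,b r^{\beta}.
\]

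Third, I would fill in the intermediate scales. Given any $\rho\in(0,r)$, pick the unique integer $k\ge 0$ with $\tau^{k+1}r\le \rho<\tau^k r$; from $\tau^{k+1}\le \rho/r$ one obtains $\tau^{k\beta}\le \tau^{-\beta}(\rho/r)^{\beta}$. Combining monotonicity of $\vp$ with the iterated bound above gives
\[
\vp(\rho)\le \vp(\tau^k r)\le \tau^{-\beta}(\rho/r)^{\beta}\vp(r)+2\tau^{-2\beta}(\rho/r)^{\beta}\,b r^{\beta}=\tau^{-\beta}(\rho/r)^{\beta}\vp(r)+2\tau^{-2\beta}\,b\rho^{\beta},
\]
which is the desired estimate with $c:=2\tau^{-2\beta}$, a constant depending only on $a,\beta,\gamma$ through $\tau$. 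The argument is almost entirely arithmetic once $\tau$ is selected; the only delicate point (and the reason $\e$ must depend on all three of $a,\beta,\gamma$) is the two-fold absorption requiring both $a\tau^{\gamma}\le\tfrac14\tau^{\beta}$ (which forces $\tau$ to depend on $\gamma-\beta$) and $a\e\le\tfrac14\tau^{\beta}$, in order to produce the strict contraction factor $\tfrac12\tau^{\beta}<1$ that drives the geometric iteration.
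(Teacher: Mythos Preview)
Your argument is correct and is precisely the standard iteration found in \cite{HanLin97}; the paper itself does not give a proof of this lemma but simply cites that reference. There is nothing to add.
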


\begin{thm}\label{thm:alm-Lip-reg}
Let $\uu$ be a minimizer in $B_1$. Then $\uu\in C^{0,\si}(B_1)$ for all $0<\si<1$. Moreover, for any $K\Subset B_1$, $$
\|\uu\|_{C^{0,\si}(K)}\le C\left(\tilde E(\uu,1)^{1/2}+1\right)
$$
with $C=C(n,p,\si,K)$.
\end{thm}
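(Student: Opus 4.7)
The plan is to read off the Hölder estimate from the Morrey-type decay in Proposition~\ref{prop:Mor-est} via the Han--Lin iteration Lemma~\ref{lem:HL}, followed by a Campanato/Morrey embedding. All the ingredients are already in place, so this amounts to an essentially routine assembly.

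Fix any $\si \in (0,1)$ and set $\beta := n - 2 + 2\si$, which lies in $(n-2,n)$. Since $r < 1$ forces $r^n \le r^\beta$, Proposition~\ref{prop:Mor-est} may be rewritten as
$$\tilde E(\uu, x_0, \rho) \le C_0\bigl[(\rho/r)^n \tilde E(\uu, x_0, r) + r^\beta\bigr],$$
which fits the hypothesis of Lemma~\ref{lem:HL} with $\gamma = n > \beta$ and $\e$ arbitrarily small. Its conclusion, specialized to a fixed universal scale (say $r = 1/4$), yields
$$\int_{B_\rho(x_0)} |\D\uu|^2 \le \tilde E(\uu, x_0, \rho) \le C(n,p,\si)\bigl[\tilde E(\uu, 1) + 1\bigr]\,\rho^{n-2+2\si}$$
for every $x_0 \in B_{1/4}$ and $\rho < 1/4$. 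This is a Morrey-space bound on $\D\uu$ with the exponent that matches $C^{0,\si}$, so Campanato's characterization of Hölder spaces (equivalently, Morrey's embedding) produces $\uu \in C^{0,\si}(B_{1/4})$ with seminorm controlled by $[\tilde E(\uu,1) + 1]^{1/2}$. A sup bound on $|\uu|$ follows either by applying the same Campanato argument directly to $\uu$, or from the subharmonicity properties of $|\uu|^2$ used inside the proof of Proposition~\ref{prop:Mor-est}.

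For a general compact $K \Subset B_1$, the proof of Proposition~\ref{prop:Mor-est} is purely local: replacing $B_1$ by a ball $B_R(x_0) \Subset B_1$ and repeating the harmonic-replacement argument verbatim yields the identical decay for $0 < \rho < r < R/2$, with constants now depending on $R$. Covering $K$ by finitely many such balls, all with closures in $B_1$, delivers the asserted estimate with $C = C(n,p,\si,K)$, the factor $\tilde E(\uu,1)^{1/2} + 1$ appearing after taking square roots. I do not anticipate a genuine obstacle here: the analytic content lives in Proposition~\ref{prop:Mor-est}, and the only point requiring any care is the bookkeeping needed to localize the Morrey decay from $B_{1/2}$ to an arbitrary $K \Subset B_1$ and to track the exact dependence of the constant on $\tilde E(\uu,1)$.
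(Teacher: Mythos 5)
Your proposal is correct and follows essentially the same route as the paper: feed the Morrey-type decay of Proposition~\ref{prop:Mor-est} into Lemma~\ref{lem:HL} (using $r^n\le r^{\,n-2+2\si}$ for $r<1$ to trade the exponent $n$ for $\beta=n-2+2\si$), obtain $\int_{B_\rho(x_0)}|\D\uu|^2\le C(\tilde E(\uu,1)+1)\rho^{\,n-2+2\si}$, and conclude by Morrey/Campanato embedding. The only cosmetic differences are that the paper fixes $K=B_{1/2}$ and lets $r\nearrow 1/2$ rather than freezing $r=1/4$ and covering a general $K$, and it leaves the $L^\infty$ bound implicit where you spell it out.
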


\begin{proof}
Without loss of generality, we may assume $K=B_{1/2}$. We fix $0<\sigma<1$. By applying Proposition~\ref{prop:Mor-est} and Lemma~\ref{lem:HL} (with $\e=0$), we have for any $x_0\in B_{1/2}$ and $0<\rho<r<1/2$
$$
\tilde E(\uu,x_0,\rho)\le C(n,p,\sigma)\left[(\rho/r)^{n+2\si-2}\tilde E(\uu,x_0,r)+\rho^{n+2\si-2}\right].
$$
Taking $r\nearrow 1/2$ gives
\begin{align*}
    \int_{B_\rho(x_0)}|\D \uu|^2\le \tilde E(\uu,x_0,\rho)\le C(n,p,\sigma)(\tilde E(\uu,1)+1)\rho^{n+2\si-2}.
\end{align*}
By Morrey space embedding, we conclude $\uu\in C^{0,\si}(B_{1/2})$ with 
\begin{equation*}
\|\uu\|_{C^{0,\si}(B_{1/2})}\le C(n,p,\si)\left(\tilde E(\uu,1)^{1/2}+1\right).\qedhere
\end{equation*}
\end{proof}

We are now ready to prove our first main result, the optimal regularity of the minimizer.

\begin{proof}[Proof of Theorem~\ref{thm:grad-u-holder}] Without loss of generality, we assume $K=B_{1/2}$. For $x_0\in B_{1/2}$ and $0<\rho<r<1/2$, let $\h\in W^{1,2}(B_r(x_0);\R^m)$ be a harmonic function such that $\h=\uu$ on $\p B_r(x_0)$. Note that
$$
\int_{B_\rho(x_0)}|\D\h-\overline{\D\h}_{x_0,\rho}|^2\le \left(\frac\rho r\right)^{n+2}\int_{B_r(x_0)}|\D\h-\overline{\D\h}_{x_0,r}|^2.
$$
Moreover, by Jensen's inequality, \begin{align*}
    &\int_{B_\rho(x_0)}|\D\uu-\overline{\D\uu}_{x_0,\rho}|^2\\
    &\qquad\le 3\int_{B_\rho(x_0)}|\D\h-\overline{\D\h}_{x_0,\rho}|^2+|\D(\uu-\h)|^2+|\overline{\D(\uu-\h)}_{x_0,\rho}|^2\\
    &\qquad\le 3\int_{B_\rho(x_0)}|\D\h-\overline{\D\h}_{x_0,\rho}|^2+6\int_{B_\rho(x_0)}|\D(\uu-\h)|^2,
\end{align*}
and similarly, $$
\int_{B_r(x_0)}|\D\h-\overline{\D\h}_{x_0,r}|^2\le 3\int_{B_r(x_0)}|\D \uu-\overline{\D\uu}_{x_0,r}|^2+6\int_{B_r(x_0)}|\D(\uu-\h)|^2.
$$
Now, we use the above inequalities and \eqref{eq:u-h-est} to obtain 
\begin{align*}
    &\int_{B_\rho(x_0)}|\D\uu-\overline{\D\uu}_{x_0,\rho}|^2\\
    &\qquad\le 3\int_{B_\rho(x_0)}|\D\h-\overline{\D\h}_{x_0,\rho}|^2+6\int_{B_\rho(x_0)}|\D(\uu-\h)|^2\\
    &\qquad\le 3\left(\frac\rho r\right)^{n+2}\int_{B_r(x_0)}|\D\h-\overline{\D\h}_{x_0,r}|^2+6\int_{B_\rho(x_0)}|\D(\uu-\h)|^2\\
    &\qquad\le 9\left(\frac\rho r\right)^{n+2}\int_{B_r(x_0)}|\D \uu-\overline{\D \uu}_{x_0,r}|^2+24\int_{B_r(x_0)}|\D(\uu-\h)|^2\\
    &\qquad\le 9\left(\frac\rho r\right)^{n+2}\int_{B_r(x_0)}|\D\uu-\overline{\D\uu}_{x_0,r}|^2+C(n,p)r^{n+\frac{2p}{2-p}}.
\end{align*}
We apply Lemma~\ref{lem:HL} (with $\e=0$) to get 
\begin{align*}
    \int_{B_\rho(x_0)}|\D\uu-\overline{\D\uu}_{x_0,\rho}|^2&\le C(n,p)\left[\left(\frac\rho r\right)^{n+\frac{2p}{2-p}}\int_{B_r(x_0)}|\D\uu-\overline{\D\uu}_{x_0,r}|^2+\rho^{n+\frac{2p}{2-p}}\right]
\end{align*}
for $0<\rho<r<1/2$. Taking $r\nearrow 1/2$ gives 
\begin{align*}
\int_{B_\rho(x_0)}|\D\uu-\overline{\D\uu}_{x_0,\rho}|^2\le C(n,p)\left(\tilde E(\uu,1)+1\right)\rho^{n+\frac{2p}{2-p}}.
\end{align*}
By Campanato space embedding, we obtain $\D\uu\in C^{0,\frac{p}{2-p}}(B_{1/2})$ with 
\begin{equation*}
\|\D\uu\|_{C^{0,\frac{p}{2-p}}(B_{1/2})}\le C(n,p)(\tilde E(\uu,1)^{1/2}+1).
\end{equation*}
This, along with Theorem~\ref{thm:alm-Lip-reg}, concludes Theorem~\ref{thm:grad-u-holder}.
\end{proof}

\section{A Harnack type inequality}\label{sec:Harnack}

In this Section we prove a Harnack type inequality for viscosity solutions to \eqref{VOP}, as defined in the Subsection \ref{sec:visco}. This is the key tool in the linearization approach utilized to obtain the $C^{1,\alpha}$ regularity of the flat part of the free boundary. As mentioned in the introduction, the viscosity method offers a robust alternative to the development of an epiperimetric inequality for minimizers, which is particularly challenging in this context.

\begin{thm}\label{holder}
There exists a universal constant $\overline{\eps}>0$ such that the following holds: suppose $\uu$ solves \eqref{VOP} in $B_1$, and for some point $x_0 \in B_1(\uu) \cup \Gamma(\uu),$
\be\label{flat_2*} |\uu| \leq u_0(x_n+b_0) \quad \text{in $B_r(x_0) \subset B_1$,} 
\ee 
\be\label{flat_2'} u_0(x_n +a_0)\leq u^1 \quad \text{in $B_r(x_0) \cap \{x_n \geq - a_0\}$}
\ee
and
\be\label{58}|u^i|\leq r^\ka\left( \frac{b_0-a_0}{r}\right)^{5/8} \quad \text{in $B_{r}(x_0)$}, \quad i=2,\ldots, m,\ee with $$b_0 -a_0 \leq  \bar\eps r.$$
Then, for some universal constant $\eta\in(0,1)$, 
\be\label{flat_22} |\uu| \leq u_0(x_n+b_1) \quad \text{ in $B_{\eta r}(x_0)$,}
\ee 
\be\label{flat_22'} u_0(x_n +a_1) \leq u^1 \quad \text{ in $B_{\eta r}(x_0) \cap \{x_n \geq -a_1\}$}
\ee
holds with $$a_0 \leq a_1 \leq b_1 \leq b_0, \quad  b_1 - a_1= (1-\eta)(b_0-a_0).$$
 \end{thm}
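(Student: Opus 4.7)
The plan is to adapt the De Silva linearization-Harnack approach \cite{DeS11} for scalar free-boundary problems, with the system effects handled as perturbations via hypothesis \eqref{58}. After translating and rescaling, take $x_0 = 0$, $r = 1$, and set $\delta := b_0 - a_0 \leq \bar\eps$. On the set $\{u^1>0\}$, the trapping $u_0(x_n+a_0) \leq u^1 \leq |\uu| \leq u_0(x_n+b_0)$ combined with \eqref{58} yields $|\uu| - u^1 \lesssim \delta^{5/4}$, strictly lower order than the characteristic scale $\delta \cdot u_0'(x_n+a_0) \sim \delta$ on compact subsets of $\{x_n + a_0 \geq c\}$. Hence at leading order the argument reduces to the scalar one, with the secondary components $u^i$, $i \geq 2$, contributing only lower-order error.

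Fix an interior point $\bar x = \tfrac{1}{5}e_n$ and perform the dichotomy: either $u^1(\bar x) - u_0(\bar x_n+a_0) \geq \tfrac{1}{2}\big(u_0(\bar x_n+b_0) - u_0(\bar x_n+a_0)\big)$ (Case A, closer to the upper barrier), or $u_0(\bar x_n+b_0) - |\uu|(\bar x)$ dominates (Case B). In Case A, set $v := u^1 - u_0(x_n+a_0) \geq 0$ on the positivity set of $u^1$. Linearizing $\Delta u^1 = |\uu|^{p-2} u^1$ around $u_0(x_n+a_0)$, one finds that $v$ satisfies a linear uniformly elliptic equation
\begin{equation*}
\Delta v = c(x)\,v + O(\delta^{5/4}) \quad\text{in } B_{3/4}\cap\{x_n \geq 1/10\},
\end{equation*}
with $c(x) = (p-1)u_0^{p-2}(x_n+a_0)$ bounded, the inhomogeneity absorbing the system error $|\uu|^{p-2} - u_0^{p-2}(x_n+a_0)$. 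Splitting off a particular solution of size $\delta^{5/4}$ and applying interior Harnack to the homogeneous remainder (with $v(\bar x)\gtrsim \delta$) gives $v \geq c\delta$ on $B_{1/2}\cap\{x_n \geq 1/10\}$. Case B is analogous for $w := u_0(x_n+b_0) - |\uu|$, invoking Remark~\ref{sub2} to obtain the differential inequality $\Delta w \leq c(x) w + O(\delta^{5/4})$.

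To propagate the interior gap across the free boundary into all of $B_\eta$, construct a strict subsolution (Case A) or supersolution (Case B) barrier modeled on $u_0(x_n + a_0 + c\delta\,\psi(x))$ for a smooth nonnegative cutoff $\psi$ equal to $1$ on $\{x_n \geq 1/10\}\cap B_{1/2}$ and vanishing on $\partial B_\eta$. For small $\delta, \eta$, a direct computation yields the strict inequality $\Delta \varphi > \varphi^{p-1}\chi_{\{\varphi > 0\}}$ (resp.\ $<$ in Case B), and $\varphi$ compares correctly with $u^1$ (resp.\ $|\uu|$) on the boundary of the sliding region by combining the Step~2 Harnack estimate with the original flatness. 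In Case A, if $\varphi$ touched $u^1 = \langle \uu, e_1\rangle$ from below anywhere in $B_\eta$, Remark~\ref{notouch} would force the touching point onto $\Gamma(\uu)$, violating Definition~\ref{solutionnew} with $f = e_1$; hence $u^1 \geq \varphi$ throughout, giving $u^1 \geq u_0(x_n + a_1)$ in $B_\eta$ with $a_1 = a_0 + c\eta\delta$. In Case B, the dual statement for $|\uu|$ uses that $|\uu|$ is a scalar subsolution on its positivity set (Remark~\ref{sub2}); comparison with the strict supersolution $\varphi$ and the original bound $|\uu| \leq u_0(x_n+b_0)$ in the thin degenerate sliver yields $|\uu| \leq u_0(x_n+b_1)$ in $B_\eta$ with $b_1 = b_0 - c\eta\delta$. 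Combining the two cases gives $b_1 - a_1 = (1-\eta)(b_0-a_0)$ for a universal $\eta \in (0,1)$.

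The principal obstacle is the barrier construction in Step~3: the $\ka$-homogeneous degeneracy of $u_0$ at its zero set makes the strict sub/supersolution inequality $\Delta\varphi \gtrless \varphi^{p-1}$ delicate to preserve under the $\delta$-sized perturbation, and the viscosity condition in Definition~\ref{solutionnew} only accesses $\uu$ through scalar components $\langle\uu, f\rangle$, so Case A must be phrased in terms of $u^1$ rather than $|\uu|$ directly. The exponent $5/8$ in \eqref{58} is tuned precisely so that the system perturbation error is strictly lower order than the Harnack gap $c\delta$, preserving the strict inequalities needed to invoke Definition~\ref{solutionnew} in Case A and the scalar comparison principle in Case B.
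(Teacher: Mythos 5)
Your overall architecture (dichotomy at an interior point, interior Harnack for the difference with the translated one-dimensional profile, then a barrier plus sliding to propagate the gap across the free boundary, with \eqref{58} making the system error $O(\delta^{5/4})$ lower order) matches the paper's strategy, but the proposal leaves unresolved exactly the steps where the real work lies. The most serious gap is the barrier. A function of the form $u_0(x_n+a_0+c\delta\,\psi(x))$ with a generic cutoff $\psi$ is \emph{not} in general a strict sub- or supersolution of $\Delta\varphi \gtrless \varphi^{p-1}$ near the zero set: writing $g=x_n+a_0+c\delta\psi$, one has $\Delta[u_0(g)]-u_0(g)^{p-1}=u_0''(g)\left(|\nabla g|^2-1\right)+u_0'(g)\Delta g$, and since $u_0''(g)/u_0'(g)\sim 1/g\to\infty$ at the free boundary, the term $u_0''(g)\cdot 2c\delta\,\partial_n\psi$ dominates there and its sign depends on $\partial_n\psi$, which cannot have a fixed sign for a cutoff vanishing on $\partial B_\eta$. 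You flag this as the ``principal obstacle'' but do not resolve it. The paper avoids it by a different construction: barriers of the form $(1-c_0\eps)u_0(d(x))$ and $\Psi(d(x))$ with $d$ the (signed) distance to a ball of radius $C_0/\eps$, so that $|\nabla d|\equiv 1$ exactly and $\Delta d=O(\eps)$ with a definite sign, plus (in the subsolution case) the explicit additive correction $\eps\frac{K}{C_0}(t+\tfrac12 t^2)$ so the barrier is genuinely negative below its zero level.

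Two further gaps concern the comparison itself. First, for the upper bound on $|\uu|$ you invoke ``the scalar comparison principle,'' but $|\uu|$ is only a subsolution on its positivity set and Definition~\ref{solutionnew} gives no condition for touching $|\uu|$ from above at the free boundary; a supersolution that vanishes could touch $|\uu|$ exactly at a point of $\Gamma(\uu)$, where the strong maximum principle argument breaks down. The paper excludes this by replacing $\psi_0$ with a family of \emph{strictly positive} supersolutions $\psi_0^\lambda$ (built from solutions $u_\lambda$ of the ODE with $u_\lambda(0)=\lambda>0$) and passing to the limit. Second, for the lower bound on $u^1$, the sliding subsolution is negative on part of the comparison region, so one must rule out $u^1$ being more negative than the barrier there. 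The crude bound $|u^1|\le u_0(x_n+b_0)\lesssim\delta^\ka$ in the sliver is far too weak (the barrier's minimum on the relevant boundary piece is of order $\delta^2$, and $\ka<2$); the paper needs the exponential decay estimate of Lemma~\ref{lem:subhar} applied to the subharmonic function $(u^1)^-$ to get $u^1>-\eps^3$. Your proposal does not mention either device, and without them both halves of Step~3 are incomplete.
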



To prove this theorem, we need the following technical lemma which will allow us to better estimate the size of $u^1$ in the flatness regime (see \eqref{eq:u^1-lower-bound}).

\begin{lem}\label{lem:subhar}
Let $w\geq0$ be a subharmonic function in $B_1$ such that for some $\eps>0$ small,
$$\{w>0\} \subset \{|x_n| < \eps\}.$$ Then, there is a universal constant $c>0$ such that
$$\|w\|_{L^\infty(B_{1/2})} \leq e^{-\frac{c}{\eps}} \|w\|_{L^\infty(B_1)}.$$
\end{lem}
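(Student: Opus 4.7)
The plan is to reduce by rescaling to an exponential decay in a large ball, then iterate a uniform one-step decay coming from the sub-mean-value inequality combined with the vanishing of $w$ outside the thin slab.

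Set $R:=1/\eps$ and $v(z):=w(\eps z)$; then $v\ge 0$ is subharmonic in $B_R$, bounded by $M:=\|w\|_{L^\infty(B_1)}$, and vanishes on $B_R\cap\{|z_n|\ge 1\}$. The statement reduces to proving $\sup_{B_{R/2}}v\le M e^{-cR}$ for some dimensional $c>0$. The key one-step estimate is: for every $y_0\in B_{R-2}$,
\begin{equation*}
v(y_0)\le (1-c_0)\sup_{B_R}v,
\end{equation*}
with a dimensional $c_0\in(0,1)$. Indeed, if $|y_{0,n}|\ge 1$ then $v(y_0)=0$; otherwise the sub-mean-value property combined with $v\equiv 0$ on $\{|z_n|\ge 1\}$ gives
\begin{equation*}
v(y_0)\le\fint_{B_2(y_0)}v\le\frac{|B_2(y_0)\cap\{|z_n|<1\}|}{|B_2|}\sup_{B_2(y_0)}v,
\end{equation*}
and since $|y_{0,n}|\le 1$, one of the balls $B_{1/2}(y_0\pm\tfrac{3}{2}e_n)$ sits inside $B_2(y_0)\cap\{|z_n|\ge 1\}$, so the volume ratio is bounded by $1-(1/4)^n=:1-c_0$.

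Iterating the one-step decay on the nested balls $B_R\supset B_{R-2}\supset\dots\supset B_{R-2k}$ yields $\sup_{B_{R-2k}}v\le (1-c_0)^k M$. Taking $k=\lfloor R/4\rfloor$ gives
\begin{equation*}
\sup_{B_{R/2}}v\le C(1-c_0)^{R/4}M\le M e^{-cR},\qquad c:=\tfrac{1}{4}\log\tfrac{1}{1-c_0}>0.
\end{equation*}
Undoing the rescaling yields the lemma for $\eps$ small; the regime where $\eps$ is bounded below is absorbed into the constant $c$ by shrinking it. There is no real obstacle: the only substantive step is the elementary geometric volume bound, and the rest is a standard rescale-and-iterate argument.
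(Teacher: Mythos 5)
Your proof is correct and follows essentially the same route as the paper's: a uniform one-step decay factor $(1-c_0)$ obtained from the fact that $w$ vanishes on a fixed fraction of each ball of radius comparable to $\eps$, iterated on the order of $1/\eps$ times. The only cosmetic difference is that you get the one-step decay directly from the sub-mean-value inequality and an explicit volume bound, whereas the paper invokes the weak Harnack inequality for the same purpose.
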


\begin{proof}
In each ball of radius $2\eps$, the set $\{w=0\}$ covers a percentage of the ball, so by Weak Harnak Inequality, the value of $w$ at the center of the ball is less than $(1-c_0)\|w\|_{L^\infty(B_1)}$ for some $c_0>0$ universal. Thus,
$$\|w\|_{L^\infty(B_{1-2\eps})} \leq (1-c_0)\|w\|_{L^\infty(B_1)},$$ 
and we apply this $\lfloor1/4\e\rfloor$ times, where $\lfloor1/4\e\rfloor$ is the greatest integer less than or equal to $1/4\e$. 
\end{proof}

Now we prove Theorem~\ref{holder}.

\begin{proof}[Proof of Theorem~\ref{holder}] After translation and rescaling of order $\ka$, we can assume that $x_0=0, r=2$, $\uu$ is a solution in a large ball containing $B_2$, and $b_0-a_0=2\eps \leq \bar \eps$. We split the proof into two steps.

\medskip\noindent\emph{Step 1.} In this step, we consider the simple case when $a_0=-\eps$ and $b_0=\eps$. To prove the statement in a ball of radius $\eta$, we distinguish two cases 
\begin{align*}
    &\text{{\it Case A:\qquad }  $|\{|\uu|(x) - u_0(x_n)\le0\}\cap B_{\frac{1}{4}}(\bar x)| \geq \frac 1 2 |B_{\frac{1}{4}}(\bar x)|$, }\\
    &\text{{\it Case B: \qquad }  $|\{|\uu|(x) - u_0(x_n)>0\}\cap B_{\frac{1}{4}}(\bar x)|\ge \frac 1 2 |B_{\frac{1}{4}}(\bar x)|$,}
\end{align*}
where $\bar x= \frac 12 e_n$.

\medskip\noindent 
{\it Case A:} We first consider the case
$$|\{|\uu|(x) - u_0(x_n)\le0\}\cap B_{\frac{1}{4}}(\bar x)| \geq \frac 1 2 |B_{\frac{1}{4}}(\bar x)| .$$
Call $w(x):= u_0(x_n+\eps) -|\uu|(x) \geq 0$. By the above inequality and the Lipschitz regularity of $u_0$, we have for some $c=c(p)>0,$
$$|\{w(x) \geq c \eps \}\cap B_{\frac{1}{4}}(\bar x)| \geq \frac 1 2 |B_{\frac{1}{4}}(\bar x)| .$$
For a small universal constant $\de>0$ with $\e\ll \de$ to be chosen later, by using \eqref{sub2}, \eqref{ODE}, \eqref{flat_2*} and the fact that $p \in (0,1)$, we have that in $B_1 \cap \{x_n>\frac \delta 2\} \subset B_1(\uu)$,
$$
\Delta w(x) = \Delta u_0(x_n+\eps)  -\Delta |\uu|(x) \leq u_0^{p-1}(x_n+\eps) - |\uu|^{p-1}(x) \leq 0.
$$
Then, by Weak Harnack Inequality, 
\be w \geq c_\delta\eps \quad \text{in $B_{\frac{3}{4}}\cap \{x_n \geq \delta\}$},\ee
which is equivalent to
\be\label{top}|\uu|(x)\leq u_0(x_n+\eps) - c_\delta \eps \quad \text{in $B_{\frac 3 4} \cap \{x_n\geq \delta\}$.}\ee

Next, we set
\begin{align}\label{eq:domain}
\begin{split}
    &\cR:=\{|x'| <1/2, \,\,\,-\delta < x_n < \delta\},\qquad \cS:=\{|x'|=\frac 1 2,\,\,\, -\delta< x_n <\delta\},\\
    &\cT:=\{|x'| \leq \frac 1 2,\,\,\, x_n =\delta\}.
\end{split}\end{align}
We wish to construct an explicit nonnegative test function $\psi_0$ in $B_1$ satisfying
\begin{align}
    &\label{super2}\Delta \psi_0 < \psi_0^{p-1} \quad \text{in $\cR \cap \{\psi_0>0\}$},\\
    &\label{B1}\psi_0(x) \geq u_0(x_n+\eps)\quad \text{on $\cS$ with strict inequality on $\cS \cap \{\psi_0>0\}$,}\\
    &\label{B2}\psi_0(x) > u_0(x_n+\eps) - c_\delta \eps\quad\text{on $\cT$, where $c_\de$ is as in } \eqref{top},\\
    &\label{improv}\psi_0(x) \leq u_0(x_n+(1-c)\eps)\quad\text{in $B_\eta$ for some small universal $c, \eta>0$. }
\end{align}
For this purpose, let $\cB$ be the ball of radius $\frac{C_0}{\eps}$ and centered at $-(\frac {C_0} \eps+(1-c_1)\eps)e_n$,  i.e.,
$$
\cB:=B_{\frac {C_0} {\eps}}(-(\frac {C_0} \eps+(1-c_1)\eps)e_n),
$$
and we set
\begin{align}\label{eq:dist-ftn}
d(x): =\begin{cases}\dist (x, \p \cB), & x \not\in\cB,\\ 0,& x\in \cB.\end{cases}
\end{align}
We then define
$$\psi_0(x):= (1-c_0\eps) u_0(d(x)), \quad x \in B_2.$$
Here, $c_0$, $c_1$ and $C_0>0$ are universal constants to be made precise later. Below, we show that such $\psi_0$ satisfies \eqref{super2} - \eqref{improv}.

We first prove the supersolution property \eqref{super2}. It is easily seen that
$$
\Delta d(x)=\frac{n-1}{d(x)+\frac{C_0}\e},\quad x\in \cB^c.
$$
We then compute that in its positivity set (for notational simplicity we omit the dependence on $x$)
\begin{align}\label{comp}
 \Delta \psi_0 & = (1-c_0\eps)(u_0''(d) + \eps \frac{n-1}{\eps d+C_0}u_0'(d))\\
 \nonumber  & =(1-c_0\eps)u_0''(d)( 1 + \eps \frac{n-1}{\eps d+C_0}\frac{u_0'(d)}{u_0''(d)})\\
\nonumber & \leq (1-c_0\eps)u_0''(d)(1 + M \eps \frac{(n-1)}{C_0}),
\end{align}
where we have used that in $B_1 \cap \{d>0\}$
\be\label{u'u''} \frac{u_0'(d)}{u_0''(d)} \leq M 
\ee with $M>0$ universal.
Thus, given that $u_0''(t) = u_0^{p-1}(t)$ and $p \in (0,1),$
$$
\Delta \psi_0 <(1-c_0\eps)^{p-1} u_0(d)^{p-1}=\psi_0 ^{p-1}
$$
as long as 
$$(1-c_0\eps)(1 + M\frac{(n-1) \eps }{C_0}) < 1,$$ 
which is true for \begin{equation*}c_0 C_0 > K:=M (n-1).\end{equation*}
In particular we can pick
\begin{equation}\label{C0}
C_0 = 2 c_0^{-1}K, 
\end{equation} 
and $c_0>0$ will be specified later.

Next, we prove \eqref{B1}. Since $u_0(x_n+\e)=0$ whenever $x_n\le -\e$, we only need to consider points on $\cS \cap \{x_n \geq -\eps\}$. Let
\be\label{c1}c_1:= \frac{1}{32 C_0} = \frac{c_0}{64 K}.\ee
Then a direct computation gives
$$
\dist\left(x,-(\frac{C_0}\e+(1-c_1)\e)e_n\right)>\frac{C_0}\e,\quad x\in \cS\cap\{x_n\ge-\e\},
$$
which implies
$$
(\cS\cap\{x_n\ge -\e\})\cap\cB=\emptyset.
$$
For $x\in \cS\cap\{x_n\ge-\e\}$, we write $d_1: = d(x)$ and $d_2:=\dist(x,\{y:y_n=-\e\})=x_n+\e$. 
Notice that, up to an error of order $\eps^2$, $d_1$ is the vertical distance to the graph of the parabola 
$$
y_n +\eps= c_1\eps - \frac{\eps}{2C_0}|y'|^2,
$$ 
hence
\begin{align}\label{eq:dist-1-2}
d_1=x_n+(1-c_1)\e+\frac{\e}{8C_0}+O(\e^2)\geq d_2 + (\frac{\eps}{16C_0} - c_1\eps).
\end{align}
In view of the definitions of $\psi_0$ and $d_2$, \eqref{B1} follows once we show for $x$ as above
$$(1-c_0\eps) u_0(d_1) > u_0(d_2).$$ 
Due to \eqref{c1} and \eqref{eq:dist-1-2}, it is enough to prove that 
$$(1-c_0\eps) u_0(d_2 + \frac{\eps}{32C_0}) > u_0(d_2).$$
By the strict convexity of $u_0$, it suffices to check that, 
$$(1-c_0\eps) (u_0(d_2) + u_0'(d_2)\frac{\eps}{32C_0}) - u_0(d_2) \geq 0.$$
Due to \eqref{C0}, this holds if we choose $\delta$ small enough such that
$$-u_0(t) + \frac{1}{ 128 K} u_0'(t) \geq 0 \quad \text{on $[0,2\delta].$}$$
Thus, \eqref{B1} is verified for $\delta$ small universal satisfying $2\de\le\frac{\ka}{128K}$.
 
In order to prove \eqref{B2}, it suffices to show that
$$(1-c_0\eps)u_0(\delta+\eps-c_1\eps) - u_0(\delta+\eps)> -c_\delta \eps.$$
Thus, using the Lipschitz continuity of $u_0$ on the interval $[0,2\delta]$, we need to prove that 
$$
(1-c_0\eps)(u_0(\delta+\eps) -c_1 \eps\, c(p,\delta)) - u_0(\delta+\eps)>-c_\delta \eps
$$ 
for a constant $c(p,\delta)>0$. Hence, it suffices to verify that
$$-c_0(u_0(\delta) + O(\eps)) - c_1  c(p,\delta) \geq -c_\delta,
$$
which, from the identity \eqref{c1}, holds true if (for some $c'>0$ universal)
$$-c_0(u_0(\delta) + O(\eps)) - c_0c'(\delta, n, p) \geq -c_\delta,$$
that is, as long as $$c_0 \leq c''(n, p, \delta)$$ for some universal constant $c''>0$.

It remains to prove \eqref{improv}. If $\e>0$ is small, then $\partial\cB$ has a small curvature, thus $d(x)\approx\dist(x_n,\{y_n=-(1-c_1)\e\})$ for $x\in \cB^c$ near the origin. Then we have 
$$d(x) \leq x_n + (1- \frac{c_1}{2})\eps$$
as long as $x \in B_\eta$ with $\eta>0$ universal. Then, in such ball,
$$\psi_0(x) \leq u_0(d(x)) \leq u_0(x_n+(1 -\frac{c_1}{2})\eps),$$
as desired.

\smallskip

We claim that
\begin{align}
    \label{eq:bdry}
    \psi_0\ge|\uu|\text{ on }\partial\cR\quad\text{and}\quad \psi_0>|\uu|\text{ on }\partial\cR\cap\{\psi_0>0\}.
    \end{align}
Indeed, $|\uu|\le u_0(x_n+\e)=0\le \psi_0$ on the bottom of the cylinder $\{|x'|\le1/2,\,\,\, x_n=-\de\}$. This, together with \eqref{top}, \eqref{B1} and \eqref{B2}, implies \eqref{eq:bdry}.

Now we show via a  sliding method that $$\psi_0 \geq |\uu| \quad \text{in $\cR$}.$$
To this end, we consider a family of translates - monotone in $t$ - given by 
$$
\psi_t(x): = \psi_0 (x + t e_n),\quad t\in(-3/4,1),\quad x\in\cR.
$$
We have in $\overline{\cR}$ 
\begin{equation}\label{T}
\psi_1(x)=(1-c_0\e)u_0(d(x+e_n))>(1-c_0\e)u_0(1-\de)>u_0(2\de)\ge u_0(x_n+\e).
\end{equation}
Moreover, if $x\in\overline{\cR}$, then $x-\frac12e_n\in B_1\cap\cB$, thus $d(x-\frac12 e_n)=0$, which implies that
$$
\psi_{-1/2}=0\quad\text{in }\overline\cR.
$$
This, along with \eqref{T} and the monotonicity of the family $\psi_t$, gives 
$$
\bar t:= \inf\{t \in (-3/4,1) : \psi_t \geq |\uu|\quad \text{in $\bar \cR \cap \overline{\{|\uu|>0\}}$}\, \}\in (-1/2,1).
$$
We take a point $\bar x \in \bar \cR \cap \overline{\{|\uu|>0\}}$ satisfying 
$$
\psi_{\bar t}(\bar x) = |\uu|(\bar x).
$$ 
We claim that
\begin{align}
    \label{eq:claim}
    \text{if $\bar t>0$, then $\bar x\in \Gamma(\uu)$.}
\end{align}
To prove it, we assume to the contrary $\bar x\in \{|\uu|>0\}$ (and $\psi_{\bar t}>|\uu|$ on $\Gamma(\uu)$). Note that from \eqref{eq:bdry} and $\psi_{\bar t}\ge\psi_0$, we see that $\bar x\not\in\partial\cR$, thus $\bar x$ is contained in the open set $\cR\cap\{|\uu|>0\}$. Then we have by \eqref{sub2} and \eqref{super2} that
$$
\Delta(\psi_{\bar t}-|\uu|)<\psi_{\bar t}^{p-1}-|\uu|^{p-1}\le 0\quad\text{near }\bar x.
$$
However, $\psi_{\bar t}-|\uu|$ has a local minimum $0$ at $\bar x$, which is a contradiction by the strong maximum principle.

We consider a family of positive functions $\psi_0^\la:B_2\to(0,\infty)$, where $0<\la<\la_0$ for some $\la_0>0$, satisfying
\begin{align}
    \label{inf}
    \begin{cases}
    \Delta \psi_0^\la<(\psi_0^\la)^{p-1}\quad\text{in }\cR,\\
    \text{$\psi_0^\la$ is nondecreasing in $x_n$-variable},\\
    \|\psi_0^\la-\psi_0\|_{L^\infty(\cR)}\to0\quad\text{as }\la\to0,
\end{cases}\end{align}
whose existence will be proved later. Since the inequalities in \eqref{B1}, \eqref{B2} and \eqref{T} are strict, these estimates hold also for $\psi_0^\la$ if $\la>0$ is small enough. For such $\la$, we apply the sliding method above to \begin{equation*}\psi_t^\la(x): = \psi_0^\la (x + t e_n) >0\end{equation*} 
to deduce that no touching can occur on the free boundary $F(\uu)$, which combined with \eqref{eq:claim} gives 
\begin{equation}\psi_0^\la \geq |\uu| \quad \text{in $\cR$}.\end{equation}
Passing to the limit for $\la \to 0$, we obtain $\psi_0\ge |\uu|$ in $\cR$. By combining this with \eqref{improv} and making $\eta>0$ smaller if necessary so that $B_\eta\subset\cR$, we conclude \eqref{flat_22}.

It remains to prove that the family $\psi_0^\la$ exists. To this point, we call $u_\la$ the solution to the ODE:
$$u_\la '' = u_\la^{p-1}\chi_{\{u_\la>0\}}, \quad t \in [0,1],$$
with initial condition
$$u_\la(0) = \la>0, \quad u_\la'(0)=0,$$
and we reflect it evenly across $t=0.$ It is easy to see that $u_\la>0$ and $u''_\la > 0$, hence we have in the interval $[0,1]$ that $u_\la$ and $u'_\la$ are increasing while $u''_\la$ is decreasing. This, along with \eqref{u'u''} and the uniform convergence $u_\la\to u_0$, gives that for $\la>0$ small  
$$u_\la' \leq  N u_\la '' \quad \text{in $[-1,1],$}$$ for some universal constant $N>0$. Thus, if we define 
$$\psi_0^\la(x):= (1-c_0\eps) u_\la(d(x)) >0, \quad x \in B_2,$$
where $d(x)$ is as in \eqref{eq:dist-ftn}, then we can repeat the calculations in \eqref{comp} and conclude that $\psi_0^\la(x)$ is a family of strictly positive supersolutions in $B_1$. Clearly, $\psi_0^\la$ satisfies \eqref{inf} and we are done.

\

\noindent
{\it Case B:} Suppose
$$|\{|\uu|(x) - u_0(x_n)>0\}\cap B_{\frac{1}{4}}(\bar x)| \geq \frac 1 2 |B_{\frac{1}{4}}(\bar x)| .$$
First, observe that by our assumptions, $u^1$ is bounded above and below by universal positive constants in the ball $B_{1/4}(\bar x).$ Thus, using assumption \eqref{58}, we have
\begin{align}\label{eq:u^1}
|\uu| = u^1\sqrt{1+\sum_{i=2}^m(u^i/u^1)^2} = u^1 + O(\eps^{5/4}) \quad \text{in $B_{1/4}(\bar x)$}.
\end{align}
Then, for $\eps>0$ small, the function
$$
w(x):= u^1(x)-u_0(x_n-\eps)
$$ 
satisfies 
\be\label{wh}
|\{w \geq c\eps\}\cap B_{\frac{1}{4}}(\bar x)| \geq \frac 1 2 |B_{\frac{1}{4}}(\bar x)|
\ee 
for some universal $c>0$. Moreover, for a universal constant $\de>0$ to be chosen later, we have by \eqref{super}, \eqref{ODE} and $p\in(0,1)$ that in $B_1\cap \{x_n \geq \delta/2\}$,
$$
\Delta w(x)= \Delta u^1(x) - \Delta u_0(x_n-\eps) \leq (u^1)^{p-1}(x) - u_0^{p-1}(x_n-\eps) \leq 0.
$$ 
From this and \eqref{wh}, we have by applying Weak Harnack inequality,
\be\label{wwbarx}w \geq c_\delta\eps \quad \text{in $B_{\frac{1}{2}}\cap \{x_n \geq \delta\}$},\ee
from which we infer that 
\be\label{wbarx}u^1(x) \geq u_0(x_n-\eps) + c_\delta\eps \quad \text{in $B_{\frac{1}{2}}\cap \{x_n \geq \delta\}$}.\ee

We let $\cR$, $\cS$ and $\cT$ be as in \eqref{eq:domain}. We now construct an explicit test function $\psi_0 \in C^{1,\ka-1}(B_2)$ such that 
\begin{align}
    &\label{test}\Delta \psi_0 > \psi_0^{p-1}\chi_{\{\psi_0>0\}}\,\,\text{ and }\,\,\psi_0 \in C^2 \quad\text{in $B_{3/4} \cap \{\psi_0 \neq 0\}$},\\
    &\label{eq:deriv-positive} \p_n \psi_0 >0 \quad \text{in $B_{3/4}$},\\
    &\label{psiu0}\psi_0(x) < u_0(x_n-\eps)  \quad \text{on $\cS \cap \{x_n \geq \eps\},$}\\
    &\label{eps3} \psi_0(x) \leq -\eps^3  \quad \text{on $\cS \cap \{x_n < \eps\},$}\\
    &\label{psiC0} \psi_0(x) < u_0(x_n-\eps) + c_\delta \eps  \quad \text{on $\cT$,}\\
    &\label{improv2}\psi_0(x) \ge u_0(x_n-(1-c)\eps)\quad\text{in $B_\eta\cap\{x_n\ge (1-c)\e\}$,}
\end{align}
where $c_\de$ is as in \eqref{wbarx} and $c,\eta$ are small universal constants to be chosen later.

To this end, we set
$$
\cB:=B_{\frac {C_0} \eps}((\frac {C_0} \eps+(1-c_1)\eps)e_n)
$$
for some universal constants $C_0, c_1>0$ to be made precise later. Moreover, we let $d(x)$ 
be the signed distance from $x$ to $\p \cB$ which is positive inside of $\cB$, i.e.,
\begin{align*}
    d(x)=\begin{cases}
        \dist(x,\partial\cB)&\text{when }x\in\cB,\\
        -\dist(x,\partial\cB)&\text{when }x\not\in\cB.
    \end{cases}
\end{align*}
We then define
$$
\Psi(t):= u_0(t) + \eps \frac{K}{C_0}(t+ \frac 1 2t^2)
$$
for some universal constant $K>0$ to be determined later and 
$$
\psi_0(x):=\Psi(d(x)).
$$
From $u_0\in C^{1,\ka-1}$ and $d\in C^{1,1}$, we see that $\psi_0\in C^{1,\ka-1}(B_2)$.

We now verify that $\psi_0$ satisfies \eqref{test}-\eqref{improv2}.

We first prove \eqref{test}. In $B_{3/4}$, $\psi_0\neq 0$ if and only if $d\neq0$, and $u_0\in C^2(\R\setminus\{0\})$. Thus $\psi_0\in C^2(B_{3/4}\cap\{\psi_0\neq0\})$. For the subsolution property of $\psi_0$, we observe that $\psi_0$ and $d$ have the same sign in $B_{3/4}$. Moreover,
$$
\Delta d(x)=-\frac{n-1}{\frac{C_0}\e-d(x)}\quad\text{in }\{d>0\}.
$$
Thus, for $K$ large universal and $\e$ small, we have in $\{d>0\}$,
\begin{align*}
    \Delta \psi_0(x)&= \Psi''(d) - \eps \frac{n-1}{C_0 - \eps d} \Psi'(d)\geq u_0^{p-1}(d)+ \frac{K}{C_0}\eps - \eps(n-1) \frac{u_0'(d)}{C_0} +O(\eps^2)\\
    &> u_0^{p-1}(d) \geq \psi_0^{p-1}(x).
\end{align*}
Similarly, in $\{d\le0\},$
$$\Delta \psi_0(x)= \Psi''(d) - \eps \frac{n-1}{C_0 - \eps d}\Psi'(d) \geq  \frac{K}{C_0}\eps +O(\eps^2)> 0.
$$

\eqref{eq:deriv-positive} simply follows from the fact that $\Psi$ is increasing in $(-1,\infty)$ and $d$ is increasing in $x_n$-variable in $B_{3/4}$.

In order to prove \eqref{psiu0}, notice that, as in Case A, \be\label{d}d(x)= x_n -(1-c_1)\eps - \frac{\eps}{2C_0}|x'|^2 + O(\eps^2),\ee hence for $c_1= (32 C_0)^{-1}$,
\begin{align}\label{eq:d}
d(x) \leq x_n - \eps -\frac{\eps}{16C_0} \quad \text{on $\cS$}.
\end{align}
Due to the monotonicity of $\Psi$ in $(-1,\infty)$, it is enough to show that
$$\Psi(x_n-\eps  -\frac{\eps}{16C_0}) < u_0(x_n-\eps) \quad \text{on $\{\eps\le x_n\le\de\}$},$$ 
or alternatively
$$\Psi(t)< u_0(t+\frac{\eps}{16C_0}), \quad -\frac{\eps}{16C_0}\le t\le \de-\e-\frac{\e}{16C_0}.$$
If $t \in [-\frac{\eps}{16C_0}, 0)$, then the inequality is obvious as the left hand side is negative. When $t=0$, we have $\Psi(0)=0<u_0(\frac{\e}{16C_0})$. So it is enough to show the inequality above in the interval  $(0, \delta].$
Using the convexity of $u_0$ and $\ka\in(1,2)$,
\begin{align*}
    &u_0(t+\frac{\e}{16C_0})-\Psi(t)\\
    &=u_0(t+\frac{\e}{16C_0})-u_0(t)-\e\frac{K}{C_0}(t+\frac{t^2}2)\ge\frac{\e}{16C_0}u_0'(t)-\e\frac{K}{C_0}(t+\frac{t^2}2)\\
    &=\frac{\e}{C_0}(\frac{c_p\ka}{16}t^{\ka-1}-K(t+\frac{t^2}2))>0\quad\text{in }(0,\de]
\end{align*}
if $0<\de\ll 1/K$ is sufficiently small universal.

We now check that \eqref{eps3} holds as well. Arguing as above, it suffices to show that 
$$\Psi(t) \leq -\eps^3, \quad -2\delta \leq t \leq -\frac{\eps}{16 C_0}.$$ This holds for small $\e>0$ as $\Psi$ is increasing and $\Psi(-\frac{\eps}{16 C_0}) = -\e^2\frac{K}{16C_0^2}(1-\frac{\e}{32C_0}).$

We now turn to \eqref{psiC0}. From $\cT\subset\cB$, we see that
$$
d(x) \leq x_n -(1-c_1)\eps\quad\text{on }\cT.
$$ 
Thus it is sufficient to show that
$$\Psi(x_n -(1-c_1)\eps) < u_0(x_n-\eps)+c_\delta \eps\quad\text{on }\cT.$$ 
This follows by using the formula for $\Psi$, $c_1=(32C_0)^{-1}$ and the convexity of $u_0$: if $C_0$ is sufficiently large so that $C_0\gg1/\de\gg1/K$, then we have on $\cT$
\begin{align*}
    &\Psi(x_n-(1-c_1)\e)-u_0(x_n-\e)=\Psi(\de-(1-c_1)\e)-u_0(\de-\e)\\
    &\le u_0(\de-\e+c_1\e)-u_0(\de-\e)+\frac{2\de K}{C_0}\e\le c_1\e u_0'(\de-\e+c_1\e)+\frac{2\de K}{C_0}\e\\
    &=\left(\frac{u_0'(\de-\e+c_1\e)}{32}+2\de K\right)\frac{\e}{C_0}< c_\de\e.
\end{align*}

Finally, for \eqref{improv2}, we observe that if $\e>0$ is small so that $\partial\cB$ has a small curvature, then 
$$
d(x)\ge x_n-(1-\frac{c_1}2)\e\ge0\quad\text{in }B_\eta\cap\{x_n\ge(1-\frac{c_1}2)\e\}
$$
with $\eta>0$ small universal. Then,
$$\Psi(d) \geq u_0(d) \geq u_0(x_n - (1- \frac{c_1}{2})\eps)\quad\text{in }B_\eta\cap\{x_n\ge(1-\frac{c_1}2)\e\},$$
as desired.

Next, we claim that
\begin{align}
    \label{eq:bdry-2}
    \psi_0< u^1\quad\text{on }\partial\cR.
\end{align}
Indeed, from \eqref{int1}, we see that $(u^1)^-:=\max\{-u^1,0\}$ is a nonnegative function in $B_1$ and is subharmonic in its positivity set. Thus it is subharmonic in $B_1$. Thanks to \eqref{flat_2*} and \eqref{flat_2'}, we can apply Lemma~\ref{lem:subhar} to obtain that for $\e>0$ small
\begin{align*}
    \|(u^1)^-\|_{L^\infty(B_{3/4})}&\le e^{-c/\e}\|(u^1)^-\|_{L^\infty(B_1)}\le e^{-c/\e}\|u_0(x_n+\e)\|_{L^\infty(B_1)}\\
    &\le Ce^{-c/\e}<\e^{3},
\end{align*}
thus 
\begin{align}
    \label{eq:u^1-lower-bound}
    u^1>-\e^3\quad\text{in }B_{3/4}.
\end{align}
Moreover, from \eqref{flat_2*}, we deduce that in the bottom of $\cR$, $\{|x'|<1/2,\,\,\, x_n=-\de\}$, $|\uu|\le u_0(-\de+\e)=0$, thus $u^1=0>\psi_0$ by the definition of $\psi_0$. This, along with \eqref{flat_2'}, \eqref{wbarx}, \eqref{psiu0}, \eqref{eps3}, \eqref{psiC0} and \eqref{eq:u^1-lower-bound}, implies \eqref{eq:bdry-2}.

Now we use a sliding argument to obtain
\begin{align}\label{eq:psi_u^1}
\psi_0\le u^1\quad\text{in }\cR.
\end{align}
For this purpose, we set
$$
\psi_s(x):=\psi_0(x-se_n),\quad s\in (-3/4,1),\,\,\, x\in\cR.
$$
We observe that by \eqref{eq:u^1-lower-bound}
$$
\psi_{2\de}\le \Psi(-\de)<-\e^3<u^1\quad\text{in }\cR,
$$
and that by \eqref{flat_2*}
$$
\psi_{-1}\ge u_0(x_n+1)>u_0(x_n+\e)\ge|\uu|\ge u^1\quad\text{in }\cR.
$$
Then we have
\begin{align*}
    \bar s:=\inf\{s\in(-1,2\de)\,:\, \psi_s\le u^1\,\,\,\text{in }\bar\cR\cap\overline{\{|\uu|>0\}}\}\in(-1,2\de).
\end{align*}
We claim that $\bar s\le0$, which readily implies \eqref{eq:psi_u^1} as $\psi_0\le \psi_{\bar s}$. To prove the claim, towards a contradiction we suppose $\bar s>0$. We take $\bar x\in\bar\cR\cap\overline{\{|\uu|>0\}}$ such that $\psi_{\bar s}(\bar x)=u^1(\bar x)$. Since $\psi_{\bar s}\le\psi_0$ in $B_{3/4}$, we have by \eqref{eq:bdry-2} that $\bar x\not\in\partial\cR$. Then we have that $\psi_{\bar s}$ touches $u^1$ from below at $\bar x\in\cR\cap\overline{\{|\uu|>0\}}$. This is a contradiction by the definition of viscosity solution \eqref{VOP} and Remark~\ref{notouch}.

Finally, \eqref{flat_22'} follows from \eqref{improv2} and \eqref{eq:psi_u^1}.

\medskip\noindent\emph{Step 2}. In this step, we consider the general case without the assumption $a_0=-\e$ and $b_0=\e$. Three instances can occur. 

If $|a_0| \leq \eta/4$ and $\bar \eps$ is small, then in view of Step 1, we obtain the desired conclusion in $B_\eta(z)$, where $z$ is the middle point between $-b_0$ and $-a_0$. As $B_{\eta/2}\subset B_\eta(z)$, this in turn implies that the statement holds in $B_{\eta/2}$. 

If instead $a_0 < -  \eta/4$, then $|\uu|=0$ in $B_{\eta/8}$ by \eqref{flat_2*} and the conclusion is trivial. 

Finally, if $a_0> \eta/4$, then $B_{\eta/8} \subset \{u^1>0\} \cap \{|\uu|>0\}$ by \eqref{flat_2'}, thus we can use the standard weak Harnarck inequality as in the beginning of Cases A and B in Step 1. 
\end{proof}

By restricting the domain of $u_0$ from $\R$ to $[0,\infty)$, we can consider its inverse function $u_0^{-1}:[0,\infty)\to[0,\infty)$. Then a repeated application of Theorem \ref{holder} leads to the following corollary. 

\begin{cor}\label{AA} Let $\uu$ be a solution to  \eqref{VOP} in $B_1$ such that for $\eps>0$
\be\label{flat_2} |\uu| \leq u_0(x_n+\eps) \quad \text{in $B_1$,} 
\ee 
\be\label{flat_2''} u_0(x_n -\eps)\leq u^1 \quad \text{in $B_1 \cap \{x_n \geq \eps\}$}
\ee
and
$$|u^i|\leq \eps^{3/4} \quad \text{in $B_1$}, \quad i=2,\ldots, m.$$Denote
\be\label{tilde}
\widetilde u^1= \frac{u_0^{-1}((u^1)^+) - x_n}{\eps}, \quad \widetilde{|\uu|} = \frac{u_0^{-1}(|\uu |)-x_n}{\eps},
\ee defined in $\overline{\{|\uu|>0\}}.$

 There exists a small universal constant $\bar \eps>0$ such that if $\eps \leq \bar \eps$, then $\widetilde u^1$ and $\widetilde{|\uu|}$ have a universal H\"older modulus of continuity at $x_0 \in B_{1/2} \cap \{|\uu|>0\}$ outside a ball of radius $r_\eps,$ with $r_\eps \to 0$ as $\eps \to 0$, i.e., for any $x_0\in B_{1/2}\cap\{|\uu|>0\}$,
 \begin{align*}
     &|\widetilde u^1(x)-\widetilde u^1(x_0)|\le C|x-x_0|^\beta,\quad x\in\{|\uu|>0\}\setminus B_{r_\e}(x_0),\\
     &|\widetilde{|\uu|}(x)-\widetilde{|\uu|}(x_0)|\le C|x-x_0|^\beta,\quad x\in \{|\uu|>0\}\setminus B_{r_\e}(x_0),
 \end{align*}
 where $\beta\in(0,1)$ and $C>0$ are universal constants.
\end{cor}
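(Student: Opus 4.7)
The proof iterates Theorem \ref{holder} at dyadic scales centered at a fixed point $x_0\in B_{1/2}\cap\{|\uu|>0\}$. Starting from \eqref{flat_2}--\eqref{flat_2''} with $a_0=-\e$, $b_0=\e$, $r_0=1$, I would construct sequences $a_k\nearrow$ and $b_k\searrow$ satisfying
$$|\uu|\le u_0(x_n+b_k)\text{ in }B_{\eta^k}(x_0),\qquad u_0(x_n+a_k)\le u^1\text{ in }B_{\eta^k}(x_0)\cap\{x_n\ge -a_k\},$$
with $b_k-a_k=2\e(1-\eta)^k$. Since the conclusion \eqref{flat_22}--\eqref{flat_22'} of Theorem \ref{holder} at scale $r_k=\eta^k$ is precisely the hypothesis \eqref{flat_2*}--\eqref{flat_2'} at scale $r_{k+1}$, one only needs to check that the smallness conditions $b_k-a_k\le\bar\e\eta^k$ and \eqref{58} remain satisfied at each step.

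The flatness condition $b_k-a_k\le\bar\e\eta^k$ reads $2\e((1-\eta)/\eta)^k\le \bar\e$ and forces at most $k\lesssim\log(1/\e)$ iterations. The non-degeneracy condition \eqref{58} reads $|u^i|\le \eta^{k\ka}((b_k-a_k)/\eta^k)^{5/8}$; since Theorem \ref{holder} gives no information on how $u^i$ for $i\ge 2$ evolve at smaller scales, one simply reuses the crude global bound $|u^i|\le\e^{3/4}$ at every step. This reduces the condition to
$$\e^{1/8}\le 2^{5/8}\eta^{k(\ka-5/8)}(1-\eta)^{5k/8}.$$
Because $\ka-5/8>3/8>0$ and $\eta,1-\eta\in(0,1)$, the right-hand side decays geometrically in $k$, again forcing $k\le c\log(1/\e)$. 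Let $K$ be the largest integer for which both conditions hold, and set $r_\e:=\eta^K$; then $r_\e\to 0$ as $\e\to 0$.

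For the Hölder estimate, applying $u_0^{-1}$ to the flatness bounds gives, on $B_{\eta^k}(x_0)\cap\{u^1>0\}$,
$$a_k\le u_0^{-1}(u^1(x))-x_n\le b_k,$$
so that $\widetilde u^1$ oscillates by at most $(b_k-a_k)/\e=2(1-\eta)^k$ on that set; the same argument applied to $|\uu|$ using the two-sided bound $u^1\le|\uu|\le u_0(x_n+b_k)$ yields the analogous oscillation control for $\widetilde{|\uu|}$. For any $x\in\{|\uu|>0\}\setminus B_{r_\e}(x_0)$, choose $k\le K$ with $\eta^{k+1}\le|x-x_0|\le\eta^k$; then
$$|\widetilde u^1(x)-\widetilde u^1(x_0)|\le 2(1-\eta)^k\le C|x-x_0|^\beta,\qquad \beta:=\frac{\log(1-\eta)}{\log\eta}\in(0,1),$$
and likewise for $\widetilde{|\uu|}$.

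The main obstacle is precisely the inability of Theorem \ref{holder} to propagate any improved estimate on the orthogonal components $u^i$, $i\ge 2$. This is what caps the iteration at a finite scale $r_\e>0$ depending on $\e$ and compels the statement of the corollary to exclude the ball $B_{r_\e}(x_0)$ rather than claim Hölder continuity up to $x_0$ itself. Verifying that $r_\e$ decays like a (small) power of $\e$ is then a direct consequence of the logarithmic bounds on $K$ derived above.
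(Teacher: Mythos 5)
Your proposal is correct and follows essentially the same route as the paper: iterate Theorem \ref{holder} on the balls $B_{\eta^k}(x_0)$, reuse the crude bound $|u^i|\le\eps^{3/4}$ to verify \eqref{58} for $k\lesssim|\log\eps|$ (which fixes $r_\eps=\eta^K\to0$), and convert the resulting trapping $a_k\le u_0^{-1}((u^1)^+)-x_n\le b_k$ into the Hölder modulus with $\beta=\log_\eta(1-\eta)$. Your explicit tracking of the additional constraint $b_k-a_k\le\bar\eps\,\eta^k$ is a harmless refinement of the same argument.
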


\begin{proof}
We apply Theorem \ref{holder} in balls $B_{r_k}(x_0)$ with $r_k= \eta^k$, $k=1,2,\ldots$ and obtain
\be\label{coro}u_0(x_n + a_k) \leq (u^1)^+ \leq |\uu| \leq u_0(x_n+b_k) \quad \text{in $B_{r_k}(x_0)$}\ee
with \be\label{osci}b_k-a_k = 2\eps (1-\eta)^{k-1} = c\eps r_k^\beta ,\quad |a_k|, |b_k| \leq \eps,\ee
where $\beta=\log_\eta(1-\eta)\in(0,1)$, as long as $r_k$ satisfies the inequality
$$
|u^i| \leq \eps^{3/4} \leq r_k^\ka \left(\frac{b_k-a_k}{r_k}\right)^{5/8}.
$$ 
This inequality holds for $k\leq C |\log \eps|,$ which tends to $\infty$ as $\eps \to 0.$ It is easy to check that inequality \eqref{coro} implies that $\widetilde u^1, \widetilde{|\uu|}$ are trapped between $a_k/\eps$ and $b_k/\eps$ in $B_{r_k}(x_0) \cap \overline{\{|\uu|>0\}}$, and in view of \eqref{osci} their oscillation is controlled by $cr_k^\beta.$
\end{proof}


\section{Improvement of Flatness \\ (Proof of Theorem \ref{main})}\label{sec:flat}

In this section, we will prove our main Improvement of Flatness lemma. This is the key tool from which the desired regularity Theorem \ref{main} is derived from a standard iterative argument (see for example \cite{DeSTor20}).

\begin{lem}[Improvement of Flatness]\label{IMPF} Let $\uu$ be a viscosity solution to \eqref{VOP} in $B_1$ satisfying the following version of $\eps$-flatness assumption in $B_1$
\be\label{flat1} |\uu| \leq u_0(x_n+\eps) \quad \text{in $B_1$},
\ee
\be\label{flat1'} u^1 \geq u_0(x_n-\eps) \quad \text{in $B_1 \cap \{x_n \geq \eps\}$},
\ee
and
\be\label{34} |\uu - f^1u^1| \leq \eps^{3/4}  \quad \text{in $B_1$},
\ee
with $0 \in \Gamma(\uu).$ If $0<r \leq r_0$ for a universal constant $r_0>0$, and $0<\eps \leq \eps_0$ for some $\eps_0$ depending on $r$, then
\be\label{flat_imp}
|\uu| \leq u_0(\langle x, \nu\rangle + \eps \frac r  2) \quad \text{in $B_r$},
\ee
\be\label{non_dimpr} \langle \uu, \bar f\rangle \geq u_0(\langle x, \nu\rangle - \eps \frac r  2) \quad \text{in $B_r \cap \left\{\langle x , \nu \rangle \geq  \eps \frac{r}{2}\right\}$}, \ee
and
\be\label{new} |\uu - (\langle \uu, \bar f \rangle) \bar f| \leq (\frac{\eps}{2})^{3/4}r^\ka \quad \text{in $B_r$,} 
\ee
with $|\nu -e_n|\leq C\eps, |\bar f-f^1| \leq C \eps$ for a universal constant $C>0.$
\end{lem}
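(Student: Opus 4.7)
The approach is a standard compactness-and-linearization contradiction argument, in the spirit of \cite{DeS11} and its many successors. Assume the lemma fails: there exist a fixed $r\le r_0$, a sequence $\e_k\downarrow 0$, and viscosity solutions $\uu_k$ to \eqref{VOP} in $B_1$ satisfying \eqref{flat1}--\eqref{34} with $\e=\e_k$ and $0\in\Gamma(\uu_k)$, yet for which no admissible pair $(\nu,\bar f)$ realizes the conclusion \eqref{flat_imp}--\eqref{new}.

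For the principal component I would work with the normalization
\[
\tilde v_k(x):=\frac{u_0^{-1}((u_k^1)^+)-x_n}{\e_k},\qquad x\in\overline{\{|\uu_k|>0\}},
\]
which, by Corollary \ref{AA}, is uniformly bounded by $1$ in absolute value and enjoys a universal H\"older modulus of continuity at every point of $B_{1/2}\cap\{|\uu_k|>0\}$ outside a ball of radius $r_{\e_k}\to 0$. For the transverse components I would set
\[
\tilde w_k^i(x):=\frac{u_k^i(x)}{\e_k^{3/4}},\qquad i=2,\ldots,m,
\]
uniformly bounded by $1$ in $B_1$ by \eqref{34}. Combining these with interior Schauder estimates for \eqref{int1} in balls whose distance to $\Gamma(\uu_k)$ is controlled by Theorem \ref{holder}, and with the Hausdorff convergence $\Gamma(\uu_k)\to\{x_n=0\}$ forced by \eqref{flat1}--\eqref{flat1'}, an Arzel\`a--Ascoli argument extracts subsequences $\tilde v_k\to\tilde v$ and $\tilde w_k^i\to\tilde w^i$ locally uniformly on $B_{1/2}\cap\{x_n>0\}$.

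Plugging the Ansatz $u_k^1=u_0(x_n+\e_k\tilde v_k)$ into \eqref{int1} and using $|\uu_k|\approx u_k^1$ together with the ODE $u_0''=u_0^{p-1}$ and the identity $c_p^{p-2}=\ka(\ka-1)$, the linearization around the background profile $u_0(x_n)f^1$ yields, at leading order,
\[
\ddiv\bigl(x_n^{2(\ka-1)}\nabla\tilde v\bigr)=0\quad\text{in }B_{1/2}^+,
\]
with a weighted-Neumann boundary behaviour on $\{x_n=0\}$ inherited from the free boundary condition via the viscosity definition of Section \ref{sec:visco}. Similarly, using $u_0^{p-2}(x_n)=\ka(\ka-1)x_n^{-2}$, each transverse limit solves
\[
\Delta\tilde w^i=\frac{\ka(\ka-1)}{x_n^2}\tilde w^i\quad\text{in }B_{1/2}^+,\qquad \tilde w^i=0\text{ on }\{x_n=0\}.
\]
The $A_2$-weighted regularity theory (as in \cite{FotKoc24}) applied to $\tilde v$, and an indicial analysis for the singular Bessel-type operator on the right (whose bounded solutions vanishing on $\{x_n=0\}$ have leading behaviour $\sim x_n^\ka$) applied to $\tilde w^i$, combined with $\tilde v(0)=0$ (because $0\in\Gamma(\uu_k)$), yield the Taylor expansions
\[
\tilde v(x)=\langle a,x'\rangle+O(|x|^{1+\alpha}),\qquad \tilde w^i(x)=c_i\,u_0(x_n)+O(|x|^{\ka+\alpha}),
\]
with $|a|,|c_i|$ universally bounded. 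Choosing the unit vector $\nu_k$ proportional to $e_n+\e_k(a,0)$ and the unit target direction $\bar f_k$ proportional to $f^1+\e_k^{3/4}\sum_{i\ge 2}c_p c_i f^i$, and translating these expansions back to the original variables in $B_r$, produces \eqref{flat_imp}--\eqref{new} for $k$ large, contradicting the failure assumption.

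The principal obstacle will be passing the viscosity free-boundary condition to the limit rigorously: one must verify that the touching-from-below characterization of Definition \ref{solutionnew}, at scale $\e_k$, forces exactly the correct weighted-Neumann boundary behaviour for $\tilde v$ on $\{x_n=0\}$. The explicit barriers $\psi_0^\la$ and $\Psi\circ d$ constructed in the proof of Theorem \ref{holder} are precisely the objects to plug in as test functions, and their asymptotics as $\e_k\to 0$ should identify the admissible class of test functions for the weighted limit operator. A secondary technical issue is that the equation for $\tilde w^i$ has a singular coefficient $x_n^{-2}$, so uniform compactness up to $\{x_n=0\}$ requires the two-sided control from Corollary \ref{AA} to keep $|\uu_k|^{p-2}$ well-behaved throughout the limiting process.
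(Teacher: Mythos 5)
Your plan coincides with the paper's own proof: the same contradiction/compactness setup with $0\in\Gamma(\uu_k)$ and $\e_k\to0$, the same normalizations $\e_k^{-1}(u_0^{-1}((u^1_k)^+)-x_n)$ and $\e_k^{-3/4}u^i_k$ controlled by Corollary \ref{AA}, convergence to the same degenerate linearized operator $\Delta\varphi+2(\ka-1)\varphi_n/x_n=0$ with the weighted Neumann condition (and the Bessel-type equation $\Delta v^i=\ka(\ka-1)x_n^{-2}v^i$ for the transverse components, handled exactly as in the paper via the substitution $v^i=x_n^\ka w^i$ and Theorem \ref{T7.2}), and the same use of the radial barriers from Theorem \ref{holder} to pass the viscosity free boundary condition to the limit. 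The only caveat is that this is a sketch --- the sliding/touching arguments of Step 2 and the final verification of \eqref{non_dimpr} via the comparison of $\langle\uu_k,\bar f\rangle$ with $u^1_k$ still need to be carried out --- but every ingredient you name is the one the paper actually uses.
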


The following lemma is straightforward and it guarantees that we can replace the flatness assumptions \eqref{flat}-\eqref{nond} in our main Theorem \ref{main}, with \eqref{flat1}-\eqref{flat1'} and \eqref{34}. Thus, the conclusion of Theorem \ref{main} follows by standard arguments from Lemma \ref{IMPF} (see for example \cite{DeSTor20}).


\begin{lem}\label{change}Let $\uu$ be a viscosity solution to  \eqref{VOP} in $B_1$ such that for $\eps>0$ small universal,
\be\label{flat_harnack1} |\uu- f^1 u_0(x_n)| \leq \eps \quad \text{in $B_1$,}
\ee
and
\be\label{non_d11} |\uu| \equiv 0 \quad \text{in $B_1 \cap \{x_n < - \eps\}$}. \ee Then
 \be  |\uu| \leq u_0(x_n+\eps^{\frac{1}{2\ka}}) \quad \text{in $B_1$}\ee
 and
\be u_0(x_n-\eps^{\frac{1}{2\ka}})\leq u^1  \quad \text{in $B_1 \cap \{x_n \geq \eps^{\frac{1}{2\ka}}\}$}.\ee
\end{lem}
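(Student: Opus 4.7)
The plan is to extract pointwise scalar bounds from the vector-valued flatness hypothesis \eqref{flat_harnack1} and then absorb the additive error $\eps$ into a horizontal shift of size $s:=\eps^{1/(2\ka)}$, exploiting the superlinear growth of $u_0$ at its free boundary. Writing $\uu=(u^1,\ldots,u^m)$, \eqref{flat_harnack1} immediately yields $|u^1-u_0(x_n)|\leq \eps$ and $|u^i|\leq \eps$ for $i\geq 2$ throughout $B_1$, hence
$$u_0(x_n)-\eps \leq u^1(x) \quad\text{and}\quad |\uu(x)|\leq u_0(x_n)+\eps \quad\text{in } B_1.$$
Both conclusions of the lemma therefore reduce to the scalar comparison claim $u_0(t+s)-u_0(t)\geq \eps$ on a suitable range of $t$.

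The first step is to prove this scalar claim for $t\geq -s$ and $\eps$ universally small. For $t\geq 0$, using $u_0'(r)=c_p\ka r^{\ka-1}$ and its monotonicity on $[0,\infty)$, a shift of variable gives
$$u_0(t+s)-u_0(t)=\int_0^s u_0'(t+r)\,dr \geq \int_0^s u_0'(r)\,dr = u_0(s)=c_p s^\ka = c_p\eps^{1/2}.$$
For $-s\leq t<0$, a direct computation yields $u_0(t+s)-u_0(t)=c_p(t+s)^\ka$, and in the regime $t\geq -\eps$ (the only one relevant below, given \eqref{non_d11}) this is at least $c_p(s-\eps)^\ka\geq 2^{-\ka}c_p\eps^{1/2}$, since $s\gg \eps$ for small $\eps$ (note $1/(2\ka)<1$). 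In either case the right hand side dominates $\eps$ once $\eps$ is smaller than a universal constant, proving the claim.

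The upper bound in the conclusion then splits into two cases. If $x_n\geq -\eps$, then $x_n\geq -s$, and the scalar claim applied at $t=x_n$ together with the componentwise estimate gives $|\uu(x)|\leq u_0(x_n)+\eps \leq u_0(x_n+s)$. If instead $x_n<-\eps$, then \eqref{non_d11} forces $|\uu|\equiv 0\leq u_0(x_n+s)$. For the lower bound, on $\{x_n\geq s\}$ we apply the scalar claim at $t=x_n-s\geq 0$ to obtain $u_0(x_n)-u_0(x_n-s)\geq \eps$, and therefore $u^1(x)\geq u_0(x_n)-\eps \geq u_0(x_n-s)$.

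There is no genuine obstacle here; the argument is a routine quantitative consequence of the $C^{1,\ka-1}$ profile of $u_0$. The exponent $1/(2\ka)$ is tuned only to make $s^\ka=\eps^{1/2}$ dominate the error $\eps$ while ensuring $s\to 0$ as $\eps\to 0$, and any exponent strictly between $1/\ka$ and $1$ would work equally well.
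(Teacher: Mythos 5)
Your proof is correct, and since the paper declares this lemma ``straightforward'' and omits its proof entirely, your argument supplies exactly the intended elementary computation: reduce to the scalar inequality $u_0(t+\eps^{1/(2\ka)})-u_0(t)\ge\eps$ via the convexity/monotonicity of $u_0'$, and use \eqref{non_d11} to handle the strip $-\eps^{1/(2\ka)}\le x_n<-\eps$ where that inequality alone would not suffice. The choice of exponent and the role of each hypothesis are correctly identified; nothing is missing.
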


This next lemma will be used in the proof of Lemma \ref{IMPF}.
\begin{lem}\label{54}Let $\uu$ be a viscosity solution to  \eqref{VOP} in $B_1$ such that for $\eps>0$,
$$|u^i| \leq \eps^{3/4} \quad \text{in $B_1$}, \quad i=2,\ldots,m,$$ 
and 
\be\label{no} |\uu| \equiv 0 \quad \text{in $B_{1} \cap \{x_n < - \eps\}$}. \ee
Then, for $C>0$ universal, 
\be |u^i| \leq C\eps^{3/4}(x_n+\eps)^+ \quad \text{in $B_{1/2}$}, \quad i=2,\ldots,m.  \ee
\end{lem}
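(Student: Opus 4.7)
The plan is to exploit the subharmonicity of $|u^i|$ combined with a standard harmonic barrier vanishing on the hyperplane $\{x_n=-\eps\}$.

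First, I would establish that $|u^i|$ is subharmonic in all of $B_1$. By \eqref{int1}, each $u^i$ satisfies $\Delta u^i = |\uu|^{p-2}u^i$ classically in $\Omega(\uu)$, and since the coefficient $|\uu|^{p-2}$ is nonnegative, Kato's inequality gives $\Delta |u^i|\geq |\uu|^{p-2}|u^i|\geq 0$ in $\Omega(\uu)$. Because $|u^i|$ is continuous, nonnegative, and identically zero off $\Omega(\uu)$, the sub-mean-value property extends automatically across the free boundary: for $x_0\notin\Omega(\uu)$ one has $|u^i|(x_0)=0\leq \fint_{B_r(x_0)}|u^i|$, while for $x_0\in\Omega(\uu)$ small balls remain in $\Omega(\uu)$ where subharmonicity is classical. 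Thus $|u^i|$ is subharmonic in $B_1$.

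Next, using \eqref{no} and the continuity of $\uu$, I obtain $|u^i|\equiv 0$ on $\{x_n\leq -\eps\}\cap B_1$. Combined with the global bound $|u^i|\leq \eps^{3/4}$, I can apply the comparison principle on $D=B_{3/4}\cap\{x_n>-\eps\}$ against the harmonic majorant $h\in C(\overline D)$ with boundary data $h=0$ on $\{x_n=-\eps\}\cap\overline{B_{3/4}}$ and $h=\eps^{3/4}$ on $\partial B_{3/4}\cap\{x_n\geq -\eps\}$, obtaining $|u^i|\leq h$ in $D$.

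Finally, I would derive a boundary Lipschitz estimate for $h$. Since $h$ vanishes on the flat piece $\{x_n=-\eps\}\cap B_{3/4}$, odd reflection across this hyperplane extends $h$ to a bounded harmonic function in a symmetric neighborhood of any point of $B'_{1/2}\times\{-\eps\}$. Interior gradient estimates then give $|\nabla h|\leq C\eps^{3/4}$ on $B_{1/2}\cap\{x_n\geq -\eps\}$ for a universal $C$, and since $h(x',-\eps)=0$, integration along vertical segments yields $h(x)\leq C\eps^{3/4}(x_n+\eps)$ on this set. For $x_n<-\eps$ the bound is trivial, since then $u^i\equiv 0$. Combining, $|u^i(x)|\leq C\eps^{3/4}(x_n+\eps)^+$ in $B_{1/2}$ as claimed.

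The main technical point is the promotion of subharmonicity from $\Omega(\uu)$ to all of $B_1$ across the free boundary; as noted above, this reduces to the elementary fact that a nonnegative continuous function that is subharmonic on its positivity set (and zero elsewhere) automatically satisfies the sub-mean-value inequality at every point. The rest is a textbook barrier/reflection argument in the upper half-ball.
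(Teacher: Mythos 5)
Your proof is correct and follows essentially the same route as the paper: compare the subharmonic function(s) built from $u^i$ with the harmonic majorant on the half-ball that vanishes on $\{x_n=-\eps\}$, then bound that majorant by $C\eps^{3/4}(x_n+\eps)$. The only cosmetic differences are that you use $|u^i|$ where the paper uses $(u^i)^\pm$, and you obtain the linear bound on the majorant via Schwarz reflection and interior gradient estimates rather than invoking the boundary Harnack principle; both are standard and equivalent here.
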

\begin{proof} 
Notice that for $i=2,\ldots,m$, $(u^i)^\pm$ are both subharmonic in $B_1 \cap \{x_n > -\eps\}$ by \eqref{int1} and vanish on $B_1\cap\{x_n=-\eps\}$. Let $w:B_1\cap\{x_n>-\e\}\to\R$ be the solution of
\begin{align*}
    \begin{cases}
        \Delta w=0&\text{in }B_1\cap\{x_n>-\e\},\\
        w=0&\text{on }B_1\cap\{x_n=-\e\},\\
        w=1&\text{on }\partial B_1\cap\{x_n\ge-\e\}.
    \end{cases}
\end{align*}
Note that by the boundary Harnack principle, $w(x)\le C(x_n+\e)$ in $B_{1/2}\cap\{x_n>-\e\}$ for some universal constant $C>0$. Moreover, $(u^i)^\pm\le \e^{3/4}w$ in $B_1\cap \{x_n>-\e\}$ by the comparison principle. These two inequalities, along with \eqref{no}, conclude the lemma.
\end{proof}

To prove Lemma~\ref{IMPF}, we also need to consider the linearized problem
\begin{equation}\label{LiE}
\begin{cases}
\Delta \varphi + s\dfrac{ \varphi_n}{x_n}=0& \text{in $B_{1}^+,$}\\
\frac{\partial }{\partial x_n^{1-s}} \, \varphi =0& \text{on $B_1'$},
\end{cases}
\end{equation}
with $s>-1.$

We say that $\varphi: \overline {B}_1^+ \to \mathbb R$ is a viscosity solution of the equation above if it satisfies the equation in $B_1^+$ in the viscosity sense and  

a) If $s \geq 1$, $\varphi$ is uniformly bounded in $B_1^+$;

b) If $s<1$, $\varphi$ cannot be touched by below (above) at a point on $B_1'$ by a test function
$$q(x):= a |x'- y'|^2 + b + \gamma x_n^{1-s}, \quad \quad a,b \in \mathbb R, y' \in \mathbb {R}^{n-1},$$
with $\gamma>0$ ($\gamma<0$). 

\

In \cite{DeSSav21}, the following $C^{1,\sigma}$ estimate was proved for solutions of \eqref{LiE}.

\begin{thm}\label{T7.2}
Assume that $\varphi$ is a solution of \eqref{LiE}, and $s > -1$. Then
$$|\varphi(x)- \varphi(0) - a' \cdot x' | \le C \|\varphi\|_{L^\infty} \,  |x|^{1+\sigma}\quad\text{in }B_{1/2}^+,$$
with $C$ large and $\sigma >0$ small, depending only on $n$ and $s$.
\end{thm}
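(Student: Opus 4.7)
The plan is to obtain this boundary $C^{1,\sigma}$ estimate through a Campanato-type dyadic iteration driven by a compactness/Liouville argument, following the scheme developed in \cite{DeSSav21} for such degenerate--singular problems. The first step is to recast \eqref{LiE} in divergence form as $\operatorname{div}(x_n^s \nabla \varphi) = 0$ in $B_1^+$, with the conormal condition $\lim_{x_n \to 0^+} x_n^s \varphi_n = 0$ on $B_1'$ (the natural reformulation of $\partial \varphi / \partial x_n^{1-s} = 0$). For $-1 < s < 1$ the weight $|x_n|^s$ lies in the Muckenhoupt class $A_2$, so the Fabes--Kenig--Serapioni theory delivers interior H\"older estimates and a weak Harnack inequality; combined with even reflection across $B_1'$ (which produces a weighted equation on all of $B_1$), this also yields boundary H\"older regularity up to $B_1'$. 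For $s \geq 1$ a parallel argument based on weighted energy estimates together with the uniform $L^\infty$ bound gives the analogous behaviour near $\{x_n = 0\}$.

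The heart of the proof is the quantitative improvement-of-flatness lemma at the origin: there exist universal $\rho \in (0, 1/2)$, $\sigma > 0$, $C > 0$, depending only on $n$ and $s$, such that for any solution $\varphi$ of \eqref{LiE} with $\|\varphi\|_{L^\infty(B_1^+)} \leq 1$ one can find $L(x) = \varphi(0) + a' \cdot x'$ with $|a'| \leq C$ and
$$\|\varphi - L\|_{L^\infty(B_\rho^+)} \leq \rho^{1 + \sigma}.$$
I would establish this by contradiction and compactness: assuming failure along a sequence $\varphi_k$, normalize the residuals $\psi_k := (\varphi_k - L_k^*)/\eta_k$, where $L_k^*$ is the best affine-in-$x'$ $L^\infty$ approximation in $B_\rho^+$ and $\eta_k \to 0$ is the residual. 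The boundary H\"older estimate from Step~1 gives equicontinuity on compact subsets of $\overline{B_{1/2}^+}$, so $\psi_k$ converges locally uniformly to a global viscosity solution $\psi_\infty$ on the half-space $\{x_n > 0\}$, of at most linear growth, with $\psi_\infty(0) = 0$ and with vanishing best affine-in-$x'$ approximation in $B_\rho^+$. A Liouville-type classification then forces $\psi_\infty$ to be affine in $x'$ alone, contradicting the normalization that extracted $\psi_\infty$ as a nontrivial limit.

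With the lemma in hand, a standard dyadic iteration at scales $r = \rho^k$, combined with the translation invariance of \eqref{LiE} in the $x'$-directions, produces affine functions $L_k = \alpha_k + \beta_k \cdot x'$ with geometrically small corrections $|\beta_{k+1} - \beta_k| \lesssim \rho^{k\sigma}$; the $\beta_k$ converge to some $a'$ and summing the errors yields the pointwise $C^{1,\sigma}$ estimate at the origin, which via a translation argument gives the estimate at every boundary point in $B_{1/2}'$. The main obstacle, as so often in degenerate/singular elliptic regularity, is the Liouville classification step: one must show that a bounded global viscosity solution of $\operatorname{div}(x_n^s \nabla \psi) = 0$ on $\{x_n > 0\}$ with the conormal boundary condition, having sublinear-in-$x'$ growth, must be affine in $x'$ and independent of $x_n$. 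For $s < 1$ this is genuinely delicate because the admissible test functions $a|x'-y'|^2 + b + \gamma x_n^{1-s}$ in the viscosity formulation encode the nontrivial trace behaviour of the singular weight: the spurious $x_n^{1-s}$ mode satisfies the PDE formally, and one has to use those very test functions as two-sided barriers to exclude it from the global limit. For $s \geq 1$ the $L^\infty$ control plus a Harnack-type estimate on differences in the weighted setting delivers the classification more directly.
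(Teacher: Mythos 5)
This statement is not proved in the paper at all: it is quoted verbatim from \cite{DeSSav21}, so there is no in-paper argument to compare against. Your outline is broadly consistent with the strategy of that reference (divergence form $\ddiv(x_n^s\nabla\varphi)=0$, the $A_2$ regime for $|s|<1$, the role of the test functions $a|x'-y'|^2+b+\gamma x_n^{1-s}$ in killing the spurious mode for $s<1$ versus plain boundedness for $s\ge 1$), and you correctly locate where the difficulty sits. But as a proof it has two genuine gaps. First, the compactness step is set up inconsistently and, as written, is circular. If the improvement-of-flatness lemma fails for normalized solutions $\|\varphi_k\|_{L^\infty}\le 1$ at a fixed scale $\rho$, the best affine residual in $B_\rho^+$ is bounded \emph{below} by $\rho^{1+\sigma}$, so your $\eta_k$ does not tend to $0$ and there is no blow-up to a global solution; and if instead you pass to a fixed-scale limit, the limit solves the very same equation \eqref{LiE}, so concluding that it is well-approximated by an affine function is exactly the theorem being proved. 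Since \eqref{LiE} is already the model (scale- and $x'$-translation-invariant) operator, compactness buys nothing by itself: the entire content is the Liouville classification of global solutions with growth $o(|x|^{1+\sigma_0})$ (equivalently, a spectral gap showing there are no homogeneous solutions on $\{x_n>0\}$, compatible with the boundary condition, of degree strictly between $1$ and $1+\sigma_0$ other than the tangential linear ones). You explicitly flag this as ``the main obstacle'' and then do not prove it; a correct write-up must either prove that Liouville theorem independently (e.g.\ by separation of variables/eigenvalue analysis on the half-sphere for $\Delta+\tfrac{s}{x_n}\partial_n$, or via the identification with radially symmetric harmonic functions in $\R^{n-1}\times\R^{1+s}$) or bypass it by differentiating the equation tangentially and running a De Giorgi--Nash--Moser oscillation decay on $\partial_{e'}\varphi$, supplemented by a barrier argument controlling $\varphi(x',x_n)-\varphi(x',0)$.

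Second, the case $s\ge1$ is not actually handled. The weight $x_n^s$ is not in $A_2$ there, so the Fabes--Kenig--Serapioni theory and the even-reflection trick are unavailable, and ``weighted energy estimates together with the uniform $L^\infty$ bound'' is not an argument: one still needs uniform boundary H\"older continuity (for the compactness) and the exclusion of singular behavior near $\{x_n=0\}$. The standard substitutes are explicit comparison functions adapted to the operator, such as the degenerate harmonic polynomial $|x'-y'|^2-\tfrac{n-1}{1+s}x_n^2$ and the radial solutions $x_n^{1-s}$ (or $\log x_n$ when $s=1$), whose unboundedness for $s\ge1$ is precisely what makes the $L^\infty$ bound an adequate boundary condition; this needs to be carried out, not asserted. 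Note also that the paper applies Theorem~\ref{T7.2} with $s=2\ka\in(2,4)$ in Step~3 of the proof of Lemma~\ref{IMPF}, so the $s\ge1$ regime cannot be treated as a footnote.
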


We now provide the proof of the improvement of flatness lemma by following the strategy of \cite{DeS11}.

\begin{proof}[Proof of Lemma \ref{IMPF}]
We divide the proof into four different steps.

\medskip\noindent\emph{Step 1 (Compactness):} Fixing $r \leq r_0$ with $r_0$ universal (the value of $r_0$ will be given in Step 4), suppose by contradiction that there exists $\eps_k \to 0$ and a sequence of solutions $\{\uu_k\}_k$ of \eqref{VOP} such that $0 \in \Gamma(\uu_k)$ and \eqref{flat1}-\eqref{flat1'} and \eqref{34} are satisfied for every $k$,
but  the conclusions \eqref{flat_imp}-\eqref{non_dimpr} and \eqref{new} of the lemma do not hold.\\
Let us set in $B_1(\uu_k) \cup \Gamma(\uu_k) \subset \{x_n  \geq -\eps_k\}$
\be\label{tildek}
\widetilde u^1_k= \frac{u_0^{-1}((u^1_k)^+) - x_n}{\eps_k}, \quad \widetilde{|\uu|}_k = \frac{u_0^{-1}(|\uu_k |)-x_n}{\eps_k }.
\ee
By the flatness assumptions \eqref{flat1}-\eqref{flat1'}, $\{\widetilde u^1_k\}_k$ and $\{\widetilde{|\uu|}_k\}_k$ are uniformly bounded in $B_1$. Moreover, $\Gamma(\uu_k)$ converges to $B_1'$ in the Hausdorff distance. Now, by Corollary \ref{AA} and Ascoli-Arzel\`{a}, it follows that, up to a subsequence, the graphs of $\widetilde{u}^1_k$ and of $\widetilde{|\uu|}_k$ over $B_{1/2} \cap (B_1(\uu_k) \cup \Gamma(\uu_k))$ converge in the Hausdorff distance to the graphs of Hölder continuous functions $\widetilde u^1_\infty, \widetilde{|\uu|}_\infty$ in $B_{1/2}^+\cup B_{1/2}'$. Moreover, for any $0<\de<1/2$, by using the flatness assumptions and arguing as in \eqref{eq:u^1}, we have for small $\e_k$,
$$||\uu_k| - u^1_k| \leq C(n,\delta) \eps_k^{3/2} \quad \text{in $B_{1} \cap \{x_n > \delta\}$},$$
with $C> |\uu|\geq u^1_k>c \delta^\ka$ for some universal constants $C,c>0$. Thus, given that $u_0^{-1}$ is Lipschitz on compact intervals in $(0,\infty)$, we conclude that  
$$|\widetilde{|\uu|}_k - \widetilde{u}_k^1| \leq C'(n,\delta) \eps_k^{1/2} \quad \text{in $B_{1} \cap \{x_n > \delta\}$},$$ hence in the limit
\be\label{equal} \widetilde{|\uu|}_\infty \equiv \widetilde{u}_\infty^1 \quad \text{in $B_{1/2} \cap \{x_n \geq 0\}.$}\ee

\medskip\noindent\emph{Step 2 (Linearized problem):} In this step, we show that $v:=\widetilde {|\uu|}_\infty$ satisfies the linearized problem \eqref{LiE} in $B^+_{1/2}$ in the viscosity sense for $s:=2(\ka-1)\in(0,2)$.

\medskip\noindent\emph{Step 2-1.} We start with the interior equation. We show that the equation is satisfied in the viscosity sense. Assume that $\varphi \in C^2$ touches $v$ strictly by above at $x_0 \in B^+_{1/2}.$ Then, up to adding a small constant to $\varphi$, $u_0(x_n + \eps_k \varphi)$ touches $|\uu_k|$ by above 
at a point $x_k \to x_0.$ Note that $x_k\in B_{1/2}\cap\{x_n>\e_k\}\subset\{|\uu_k|>0\}$ for large $k$. For such $k$, since in a neighborhood of $x_k$,
$$\Delta |\uu_k| \geq |\uu_k|^{p-1},$$ 
then
$$\Delta (u_0(x_n+\eps_k \varphi)) \geq u_0^{p-1}(x_n+\eps_k \varphi).$$ 
Thus, we have for $g(t)=t^{p-1}$
\begin{align*}
    \Delta(u_0(x_n+\e_k\vp))-\Delta u_0(x_n)&\ge g(u_0(x_n+\e_k\vp))-g(u_0(x_n))\\
    &=\e_kg'(u_0(x_n))u_0'(x_n)\vp+O(\e_k^2).
\end{align*}
By dividing by $\e_k$, we get that in the limit
$$\Delta (u_0'(x_n)\varphi) \geq g'(u_0(x_n))u_0'(x_n)\varphi \quad \text{at $x_0$.}$$ 
Combining this with 
$$\Delta (u_0'(x_n)) = g'(u_0(x_n))u_0'(x_n),$$ 
which follows by differentiating the equation satisfied by $u_0$ in the $x_n$-direction, we derive that 
$$\Delta \varphi + 2 \frac{\nabla u_0'(x_n)}{u_0'} \cdot \nabla \varphi \geq 0 \quad \text{at $x_0$,}$$
hence by homogeneity
$$\Delta \varphi + 2(\ka-1)\frac{e_n}{x_n} \cdot \nabla \varphi \geq 0 \quad \text{at $x_0$.}$$
If $\varphi$ touches $v$ strictly by below, we repeat the above process with $u_k^1$ rather than $|\uu_k|,$ since $$\Delta u^1_k \leq (u^1_k)^{p-1}\quad\text{in }B_{1/2}\cap\{x_n>\e_k\}.$$

\medskip\noindent\emph{Step 2-2.} Since $v$ is bounded in $B^+_{1/2}$, it is sufficient to check the boundary condition on $B_{1/2}'$ for $s\in(0,1)$.

\medskip\noindent\emph{Step 2-2-1.}
Assume by contradiction that there exists a function 
$$q(x):=  \frac 1 2 a |x'- y'|^2 + b + \gamma x_n^{1-s}, \quad \quad a,b \in \mathbb R, y' \in \mathbb {R}^{n-1},$$
with $\gamma<0$, which touches $v$ by above at $x_0 \in B_{1/2}'.$ Without loss of generality we can assume $a \geq 1$, and since $s\in (0,1)$, we can replace $q$ with the function
$$\frac 1 2 a |x'- y'|^2 + b - \frac{aK}{\ka} x_n,$$ with $K>0$ universal to be made precise later. By abuse of notation we continue to denote this function with $q$.

Next, we define a family of radial supersolutions as in Therem \ref{holder} (case A in Step 1):
$$\psi(x) = (1- Ka\eps ) u_0(d(x)),\quad x\in B_1,$$
with $$d(x): =\begin{cases}dist (x, \partial\cB),& x \in B_1 \cap \cB^c,\\ 
0, & x\in B_1\cap \cB,\end{cases}$$
where
$$\cB : = B_{\frac{1}{a\eps}}(-\frac{1}{a\eps} e_n).$$
By following the computations in \eqref{comp}, we get
\begin{align*}
    \Delta\psi\le(1-Ka\e)u_0''(d)(1+M\e a(n-1))\quad\text{in }\{\psi>0\}
\end{align*}
for some $M>0$ universal. It follows that
$$
\Delta\psi<(1-Ka\e)^{p-1}u_0^{p-1}(d)=\psi^{p-1}\quad\text{in }\{\psi>0\}
$$
as long as $(1-Ka\e)(1+M\e a(n-1))<1$ which holds if $K$ is large universal. We observe that by the homogeneity of $u_0,$
$$\widetilde{\psi} := \frac{u_0^{-1}(\psi) - x_n}{\eps} = \frac{1}{\eps} (d(1-a K \eps)^{\frac{1}{\ka}} - x_n) =\frac{1}{\eps} ( d - \frac{aK\eps}{\ka}d -x_n) + O(\eps).$$
Using that up to an $\eps^2$ error, $$d(x) = x_n + \frac{a\eps}{2}|x'|^2$$ we conclude that 
$$\widetilde{\psi} = \frac{a}{2}|x'|^2 - \frac{aK}{\ka} x_n + O(\eps) \quad \text{in $\overline{\{\psi>0\}}$}.$$ 
Then it is easily seen that for the translation
$$
\psi_\eps(x):=\psi(x-y' + b\eps e_n),
$$ 
it holds
\be\label{q}
\widetilde{\psi_\eps}:=\frac{u_0^{-1}(\psi_\e)-x_n}\e = q(x) + O(\eps) \quad \text{in $\overline{\{\psi_\eps>0\}}$}.
\ee 
We claim that for $k$ large, $\eps_k \to 0$, a translation $\psi_{\eps_k, \tau_k}$ of $\psi_{\eps_k}$ by  $O(\eps_k)$ in the $e_n$ direction touches $|\uu_k|$ by above at $x_k \to x_0$ with $x_k \in \Gamma(\psi_{\eps_k, \tau_k}) \cap \Gamma(\uu_k)$. This is a contradiction by the same argument as in Case A in Step 1 of the proof of Theorem \ref{holder}, where we considered a family of positive supersolutions.

To justify the claim, note that if $\psi_{\eps,\tau}$ represents the translation of $\psi_\eps$ by a vector $\tau\eps e_n,$
$$\psi_{\eps,\tau} = \psi_\eps(x+\tau \eps e_n),$$ then the graph of $\widetilde{\psi_{\eps,\tau}}=\frac{u_0^{-1}(\psi_{\e,\tau})-x_n}\e$ in $\R^{n+1}$ is translated by the vector $-\tau\e e_n+(\tau+O(\e))e_{n+1}$. Hence, by the uniform convergence of the graphs of $\widetilde{|\uu|}_k$ to $v$ and of $\widetilde{\psi_{\eps_k, \tau}}$ to $q+\tau $, by fixing $\tau=\tau_0>0$ small, 
we can conclude that, $\widetilde{\psi_{\eps_k, \tau_0}}$ is strictly larger than $\widetilde{|\uu|}_k$ in $\overline{\{|\uu_k|>0\}}\cap\overline{\{\psi_{\eps_k, \tau_0}>0\}}$  in a neighborhood of $x_0$.
This in turn gives that in a neighborhood $B_\eta(x_0),$
\be\label{ineq}\psi_{\eps_k, \tau_0} > |\uu_k| \quad \text{in {$B_\eta(x_0) \cap \overline{\{|\uu_k|>0\}} \cap\overline{\{\psi_{\eps_k, \tau_0}>0\}}.$}}\ee

On the other hand, from the definition of $\widetilde{|\uu|}_k$ it follows that $F(\uu_k)$ lies in a $o(\eps_k)$ neighborhood of the graph $x_n+\eps_k v(x',0)=0,$ 
while $F(\psi_{\eps_k, \tau_0})$ lies in a $o(\eps_k)$ neighborhood of $x_n = -\eps_k(q+\tau_0)$.  
Thus, $$\overline{\{|\uu_k| > 0\}} \subset \overline{\{\psi_{\eps_k,\tau_0}>0\}},$$ 
which means that \eqref{ineq} is satisfied everywhere in $ B_\eta(x_0) \cap \overline{\{ |\uu_k| > 0\}},$ and therefore 
\be\label{ineq1}\psi_{\eps_k, \tau_0} \geq |\uu_k| \quad \text{in {$B_\eta(x_0)$.}}\ee
Similarly, for $\tau=-\tau_0$ we use 
$$q-\tau_0 > v \quad \text{on $B_{2 \eta}(x_0) \setminus B_{\eta/4}(x_0)$}$$
and obtain 
\be\label{ineq11}\psi_{\eps_k, -\tau_0} \geq |\uu_k| \quad \text{in $B_\eta(x_0) \setminus B_{\eta/2}(x_0) $.}\ee
However, this inequality cannot hold in a full ball around $x_0$, otherwise it would imply the same inequality for $\widetilde{\psi_{\eps_k, -\tau_0}}$ and $\widetilde{|\uu|}_k$. This is clearly not true, as their graphs converge to $q-\tau_0$ and $v$, respectively. In conclusion, a translation $\psi_{\eps_k, \tau_k}$ with $\tau_k \in (-\tau_0,\tau_0),$ must touch $|\uu_k|$ by above at some point $x_k \in \overline{\{|\uu_k| >0\}} \cap B_{\eta/2}(x_0)$, $x_k \to x_0$, and since no touching can occur in the interior ($\psi$ is a strict supersolution), it must occur on the intersection of the free boundaries.

\medskip\noindent\emph{Step 2-2-2.}
Assume by contradiction that there exists a function 
$$q(x):=  -\frac 1 2 a |x'- y'|^2 + b + \gamma x_n^{1-s}, \quad \quad a,b \in \mathbb R, y' \in \mathbb {R}^{n-1},$$
with $\gamma>0, a\geq 1$, which touches $v$ by below at $x_0 \in B'_{1/2}.$
Set, $$\cB : = B_{\frac{1}{a\eps}}(\frac{1}{a\eps} e_n).$$ and let
$d(x)$ 
be the signed distance from $x$ to $\p \cB$ which is positive inside of $\cB$. 
Set
$$\Psi(t):= u_0(t) + \eps Ka (t+ \frac 12 t^2).$$ Then, following the computations in Case B in Step 1 of Theorem \ref{holder}, $\psi(x):=\Psi(d(x))$ is a strict subsolution in $B_1\cap \{\psi \neq 0\}$ for $K>0$ universal.

As before, we compute that for $t \in (0,1)$
\begin{align*}u_0^{-1}(\Psi(t)) &= t(1+ \eps \frac{Ka}{c_p}t^{1-\ka}(1+\frac 1 2 t))^{\frac{1}{\ka}}\\
&= t+ K_0 \eps a t^{2-\ka}(1 + \frac 1 2 t) + O(\eps^2 t^{3-2\ka}), \quad \text{where }K_0=\frac{K}{c_p\ka},\\
&= t+ K_0 \eps a t^{2-\ka}(1 + \frac 1 2 t) + O(\eps^2),\end{align*}
where we have used that $\ka \in (1,\frac 3 2 )$ when $s<1.$ Thus, 

$$\widetilde{\psi}(x):=\frac{u_0^{-1}(\psi)-x_n}{\e}= \frac{d-x_n}{\eps}+ K_0 a d^{2-\ka}(1 + \frac 1 2 d) + O(\eps)$$
and using that $d-x_n = -\frac{\eps a}{2} |x'|^2 + O(\eps^2)$ we get that 
$$\widetilde{\psi}(x)= -\frac{a}{2}|x'|^2+ K_0 a x_n^{2-\ka}(1 + \frac 1 2 x_n) + O(\eps) \quad \text{in $\overline{\{\psi>0\}}.$}$$
As before, after an appropriate translation of $\psi_\eps$ and up to appropriately modifying $q$ in a neighborhood of $x_0$,
$$\widetilde{\psi_\eps}= q + O(\eps) \quad \text{in $\overline{\{\psi_\eps>0\}}.$}$$ Precisely, we used that for $x_n$ small
$$\gamma x_n^{1-s} \geq K_0 a x_n^{2-\ka}(1 + \frac 1 2 x_n)$$
 as $1-s<2-\ka.$
As in Step 2-2-1, in a small ball $B_{\eta}(x_0)$, for $k$ large, $\eps_k \to 0$, a translation $\psi_{\eps_k,\tau_k}^+$ of $(\psi_{\eps_k})^+$ by  $o(\eps_k)$ in the $e_n$ direction touches $(u^1_k)^+$ by below at $x_k \to x_0$ with $x_k \in \{\psi_{\eps_k, \tau_k}=0\}$. We show that this translation touches $u_k^1$ by below at $x_k$, reaching a contradiction. For notational simplicity we call the translation $\psi_k$ 
and we still denote with $d$ the signed distance to $\{\psi_k=0\}$. We argue as in Case B in Step 1 of Theorem \ref{holder}: using that $(u^1_k)^-$ is subharmonic and $(u^1_k)^- \leq \eps_k^3$ in $B_{1/2}$, and $(u^1_k)^- \equiv 0 $ in $\{d \geq 0\}$ 
we conclude that in the region $B_{\eta/2}(x_0) \cap \{d<0\}$
$$(u^1_k)^- \leq  C(\eta)\eps_k^3|d|.$$
Hence, in the same region, for $k$ large
$$u^1_k \geq  C(\eta)\eps_k^3 d \geq \eps_k Ka (d+ \frac 12 d^2)=\psi_k, $$
as desired.

\medskip\noindent\emph{Step 3 (Convergence of the remaining components):} For $i=2,\ldots,m$, set  $v^i_k:=\eps_k^{-3/4}u^i_k$. Then, by assumption \eqref{34}, the $v^i_k$ are uniformly bounded and they satisfy
$$\Delta v^i_k = |\uu_k|^{p-2} v^i_k \quad \text{in $\{|\uu_k|>0\}.$}$$ 
By the flatness assumption \eqref{34}, interior H\"older estimates, and Lemma \ref{54}, we conclude that $v^i_k \to v^i$ uniformly on compact sets in $B^+_{1/2}$ with $v^i$ satisfying
\begin{equation}\label{components}
\begin{cases}
\Delta v^i = |u_0(x_n)|^{p-2}v^i &\text{in $B_{1/2}^+,$}\\
v^i =0 & \text{on $B_{1/2}\cap\{x_n\le0\}$}.
\end{cases}
\end{equation}
This problem can be reduced to \eqref{LiE} with $s=2\ka\in(2,4)$, after the transformation:
$$v^i = x_n^\ka w^i,$$ that is:
\begin{equation}
\Delta w^i + 2\ka\dfrac{ w^i_n}{x_n}=0  \quad\text{in $B_{1/2}^+,$}
\end{equation}
with $w^i$ uniformly bounded in $B_{1/4}^+$. The latter claim follows by standard arguments by comparing $v^i$ with a multiple of the explicit supersolution/subsolution $$\pm (C_1x_n^\ka + |x'-y'|^2 - C_2 x_n^2),\quad y'\in B'_{1/4},$$ with $C_1 \gg C_2$ universal. 

In particular, by the Lipschitz regularity of $w^i$ that follows from Theorem \ref{T7.2}, for $C>0$ universal,
\be\label{boh}|v^i - a_i u_0(x_n)| \leq C |x|^{1+\ka}\quad\text{in }B_{1/4}, \quad |a_i|\leq C.\ee
 
\medskip\noindent\emph{Step 4 (Improvement of flatness):}
Since $v$ satisfies \eqref{LiE} and $v(0)=0$, using the estimate in Theorem \ref{T7.2} and the convergence of the graphs of $\tilde u_k^1$ and $\widetilde{|\uu|}_k$ in the Hausdorff distance, we conclude that for $\rho$ small universal,
$$u_0(x_n + \eps_k \xi' \cdot x' - \eps_k \frac{\rho}{8}) \leq (u^1_k(x))^+ \leq |\uu_k| \leq u_0(x_n + \eps_k \xi'\cdot x' + \eps_k \frac{\rho}{8}), \quad |\xi'| \leq C,$$
holds in $B_\rho(\uu_k) \cup \Gamma(\uu_k)$. Then, thanks to the monotonicity of $u_0$, the inequality holds on the set where $|\uu_k|=0$,
hence it is valid in the whole $B_\rho.$ Denote
$$\nu:= \frac{e_n + \eps_k \xi'}{|e_n + \eps_k \xi'|} = e_n + \eps_k \xi' + O(\eps_k^2).$$ Then we easily deduce that in $B_\rho$,
\be\label{bbb} u_0(x \cdot \nu - \eps_k \frac{\rho}{4}) \leq (u^1_k(x))^+ \leq |\uu_k| \leq u_0(x \cdot \nu + \eps_k \frac{\rho}{4}).\ee
From \eqref{boh}, using that by the flatness assumption
$$u^1_k = u_0(x_n) + O(\eps_k),$$
 we conclude that for $c_0>0$ small universal to be chosen later, in $B_\rho$ ($\rho$ small universal)
\be\label{ua}|u^i_k - a_i \eps_k^{3/4}u^1_k| \leq c_0 \eps_k^{3/4}\rho^\ka,\quad i=2,\ldots,m.\ee
Set
$$\bar f:=\frac{f^1 + \eps_k^{3/4}\sum_{i \neq 1} a_i f^i}{\left|f^1 + \eps_k^{3/4}\sum_{i \neq 1} a_i f^i\right|} = f^1 + \eps_k^{3/4}\sum_{i \neq 1} a_i f^i + O(\eps_k^{3/2}).$$
Since $\bar f$ is orthogonal to $f^i - \eps_k^{3/4}a_i f^1$ for each $i=2,\ldots,m$, we conclude from \eqref{ua} that in $B_\rho$
$$|\uu_k - \mean{\uu_k, \bar f} \bar f | \leq (\frac{\eps_k}{2})^{3/4} \rho^\ka.$$
It remains to show that $\mean{\uu_k, \bar f}$ satisfies \eqref{non_dimpr}, hence reaching a contradiction.

From the definition of $\bar f$ we obtain that for $C_0>0$ universal
\begin{align}\label{eq:u-f}|\mean{\bar f,\uu_k}-u^1_k|\le C_0\e_k^{3/4}|u_k^i|\quad\text{in }B_\rho.\end{align}
We use this estimate, together with \eqref{bbb}
and
$$|u^i_k| \leq \min\{\eps^{3/4}, |\uu_k|\} \quad\text{in }B_1,\quad i=2,\ldots,m,$$ to prove the desired claim. Indeed, note that by \eqref{bbb}, we have in $\{\mean{x,\nu}\ge\e_k\frac\rho2\}$ $(u_k^1)^+>0$, thus $u_k^1=(u_k^1)^+$. If $t:=\mean{x, \nu}  -\frac{\eps_k}{4} \rho \in [\frac{\eps_k}{4} \rho, \eps_k^{1/3}]$ then
\begin{align*}
u_k^1-u_0(\mean{x,\nu}-\e_k\frac\rho2)&\ge u_0(t) - u_0(t-\frac{\eps_k}{4} \rho) \geq c \eps_k u_0'(t) \geq c \eps_k \frac{u_0(t)}{t}\\
&\geq C_0 \eps_k^{3/4} u_0(t+\eps_k \frac\rho 2)\ge C_0\e_k^{3/4}|\uu_k|\ge C_0\e_k^{3/4}|u_k^i|.
\end{align*}
Combining this with \eqref{eq:u-f} gives \eqref{non_dimpr}.
Similarly, if $t > \eps_k^{1/3}$, we use $\ka \in (1,2)$ to obtain
\begin{equation*}u_k^1-u_0(\mean{x,\nu}-\e_k\frac\rho2) \geq c \eps_k u_0'(t) \geq c \eps_k t \geq C_0 \eps_k^{3/2}.\qedhere\end{equation*}
\end{proof}


\section{Analyticity of the free boundary \\ 
(Proof of Theorem \ref{thm:main})}\label{sec:analy}




In this section, we prove that the flat free boundary for minimizers is analytic, which hinges on the $C^{1,\alpha}$-regularity results established in the previous sections, see Theorem \ref{main}. To achieve this, we adopt the approach from \cite{FotKoc24}, which relies on applying the partial Hodograph-Legendre transformation and the implicit function theorem.

We start with some technical results on the energy minimizer.

\begin{prop}[Weiss-type monotonicity formula]
    \label{prop:Weiss}
    Let $\uu$ be a minimizer to $E(\cdot)$ in $B_1$. For $x_0\in B_{1/2}$ and $r\in(0,1/2)$, we set
    $$
    W(\uu,x_0,r):=\frac1{r^{n+2\ka-2}}\left[\int_{B_r(x_0)}\left(|\D \uu|^2+\frac2p|\uu|^p\right)-\frac\ka{r}\int_{\partial B_r(x_0)}|\uu|^2\right].
    $$
    Then
    \begin{enumerate}
        \item $W(\uu,x_0,r)$ is nondecreasing in $r$.
        \item If $W(\uu,x_0,r)$ is constant for $0<r<r_0$, then $\uu$ is homogeneous of degree $\ka$ with respect to $x_0$ in $B_{r_0}(x_0)$.
        \item $x\longmapsto W(\uu,x,0+)$ is upper-semicontinuous.
    \end{enumerate}
\end{prop}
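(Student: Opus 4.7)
\medskip

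\noindent\textbf{Proof plan.} The overall strategy is the classical Weiss approach via scale invariance of the functional. The key observation is that, because $p\ka=2(\ka-1)$, the rescaling
$$\uu_r(y):=r^{-\ka}\uu(x_0+ry),\qquad y\in B_1,$$
preserves the equation, and the Dirichlet and potential terms of $E$ both scale with the factor $r^{n+2\ka-2}$. In particular, $\uu_r$ is again a minimizer on $B_1$ with its own boundary data (compared to competitors via the obvious rescaling), and
$$W(\uu,x_0,r)=\int_{B_1}\Bigl(|\D\uu_r|^2+\tfrac{2}{p}|\uu_r|^p\Bigr)-\ka\int_{\p B_1}|\uu_r|^2=:I(\uu_r)-\ka B(\uu_r).$$
From Theorem~\ref{thm:grad-u-holder} we know $\uu\in C^{1,\ka-1}_{\loc}$, so $\uu_r\in C^{1,\ka-1}(\overline{B_1})$ and, since $|\uu_r|^{p-1}$ is bounded, the distributional Laplacian $\Delta\uu_r=|\uu_r|^{p-2}\uu_r\chi_{\{|\uu_r|>0\}}$ is an $L^\infty$ function. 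All the integrations by parts below are therefore legitimate.

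For \emph{monotonicity}, I differentiate $W$ in $r$. A direct computation gives
$$\frac{d\uu_r}{dr}(y)=\frac{1}{r}\bigl(y\cdot\D\uu_r(y)-\ka\,\uu_r(y)\bigr).$$
Differentiating $I(\uu_r)$ and using $\Delta\uu_r=|\uu_r|^{p-2}\uu_r\chi_{\{|\uu_r|>0\}}$ to integrate by parts, the bulk terms cancel and one obtains
$$\frac{d}{dr}I(\uu_r)=2\int_{\p B_1}\Bigl\langle \uu_{r,\nu},\,\tfrac{d\uu_r}{dr}\Bigr\rangle,\qquad \frac{d}{dr}B(\uu_r)=2\int_{\p B_1}\Bigl\langle \uu_r,\,\tfrac{d\uu_r}{dr}\Bigr\rangle.$$
On $\p B_1$ one has $\uu_{r,\nu}=y\cdot\D\uu_r$ and hence $\frac{d\uu_r}{dr}=\frac{1}{r}(\uu_{r,\nu}-\ka\,\uu_r)$. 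Combining,
\begin{equation*}
\frac{d}{dr}W(\uu,x_0,r)=\frac{2}{r}\int_{\p B_1}\bigl|\uu_{r,\nu}-\ka\,\uu_r\bigr|^2\,d\sigma=\frac{2}{r^{n+2\ka-1}}\int_{\p B_r(x_0)}\Bigl|\uu_\nu-\tfrac{\ka}{r}\uu\Bigr|^2\,d\sigma\ge 0.
\end{equation*}
The (mild) technical point to handle carefully is the differentiation of $\int_{B_1}|\uu_r|^p$ in $r$, which I would justify by writing this integral as $r^{2-2\ka-n}\int_{B_r(x_0)}|\uu|^p\,dx$ and differentiating the latter, rather than passing the derivative inside where $|\uu_r|^{p-2}$ could blow up.

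For (ii), the above identity shows that if $W(\uu,x_0,\cdot)$ is constant on $(0,r_0)$, then $\uu_\nu=\frac{\ka}{r}\uu$ on $\p B_r(x_0)$ for a.e.\ $r\in(0,r_0)$. But $\frac{d}{dr}\bigl[r^{-\ka}\uu(x_0+r\omega)\bigr]=r^{-\ka-1}(r\uu_\nu-\ka\uu)\bigr|_{x_0+r\omega}$, so this condition is exactly that $r\mapsto r^{-\ka}\uu(x_0+r\omega)$ is constant for every $\omega\in\Ss^{n-1}$; i.e.\ $\uu$ is $\ka$-homogeneous about $x_0$ in $B_{r_0}(x_0)$. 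For (iii), I note that for each fixed $r\in(0,1/2)$, continuity of $\uu$ and $\D\uu$ (from Theorem~\ref{thm:grad-u-holder}) gives that $x\mapsto W(\uu,x,r)$ is continuous on $B_{1/2-r}$. By the monotonicity in (i),
$$W(\uu,x,0+)=\inf_{r>0}W(\uu,x,r),$$
and an infimum of continuous functions is upper-semicontinuous, concluding (iii).
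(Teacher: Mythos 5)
Your argument follows essentially the same route as the paper, which simply invokes the computation of \cite[Proposition~2.3]{FotShaWei21} to obtain the identity
$W(\uu,x_0,s)-W(\uu,x_0,t)=\int_t^s\int_{\partial B_1}\tfrac{2}{r}|\langle\D\uu_r,x\rangle-\ka\uu_r|^2\,dS_x\,dr$
and then deduces (i)--(iii) exactly as you do; your derivative formula matches this identity, and your treatments of (ii) and (iii) coincide with the paper's.

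One justification in your write-up is wrong, though the step it supports survives: for $0<p<1$ the function $|\uu_r|^{p-1}$ is \emph{not} bounded --- it blows up near $\Gamma(\uu_r)$ like $d^{\ka(p-1)}=d^{\ka-2}$ with $\ka-2\in(-1,0)$ --- so $\Delta\uu_r=|\uu_r|^{p-2}\uu_r\chi_{\{|\uu_r|>0\}}$ is only in $L^q_{\loc}$ for $q<\tfrac{1}{2-\ka}$, not in $L^\infty$. This weaker integrability (together with $\dot\uu_r\in L^\infty$) still legitimizes the integration by parts, but the reason you give does not. Relatedly, your workaround for differentiating $\int_{B_1}|\uu_r|^p$ only relocates the difficulty: after you integrate by parts the Dirichlet term, the bulk term $\int_{B_1}\langle\Delta\uu_r,\dot\uu_r\rangle$ contains $|\uu_r|^{p-2}\langle\uu_r,y\cdot\D\uu_r\rangle=\tfrac1p\,y\cdot\D(|\uu_r|^p)$, which behaves like $d^{\ka p-1}=d^{2\ka-3}$ and is merely in $L^1_{\loc}$ (since $2\ka-3>-1$); one must still invoke the divergence theorem for the $W^{1,1}$ function $|\uu_r|^p$ to match it against the boundary and bulk terms produced by your differentiation of $r^{-(n+2\ka-2)}\int_{B_r(x_0)}|\uu|^p$. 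With these integrability statements corrected, the proof is complete and agrees with the one the paper imports from \cite{FotShaWei21}.
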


\begin{proof}
By arguing as in the proof of \cite[Proposition~2.3]{FotShaWei21}, we can obtain that for $0<t<s<r_0$,
\begin{align*}
    W(\uu,x_0,s)-W(\uu,x_0,t)=\int_t^s\int_{\partial B_1}\frac2r|\mean{\D \uu_r,x}-\ka \uu_r|^2dS_xdr,
\end{align*}
where $\uu_r(x)=\frac{\uu(x_0+rx)}{r^\ka}$. This implies the first and the second statements in Proposition~\ref{prop:Weiss}. The last one follows from the first one.    
\end{proof}

We consider the class  of half-space solutions of \eqref{eq:system}:
\begin{align}\label{eq:half-space-sol}
    \Hf:=\{x\longmapsto c_p \mean{x, e}_+^\ka f\,:\, \text{$e\in\R^n$ and $f\in\R^m$ are unit vectors}\}.
\end{align}

\begin{lem}\label{lem:Half-space}
    Let $\uu\not\equiv0$ be a homogeneous global solution of \eqref{eq:system}. Suppose that $\uu=0$ in $\{x_n<0\}$ and $W(\uu,0,1)=\omega_p$, where $\omega_p$ is the Weiss energy of half-space solutions in $\Hf$. Then $\uu\in\Hf$.
\end{lem}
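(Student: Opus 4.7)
My plan is to combine a Pohozaev-type identity with the decomposition $\uu = UF$ of $\uu$ into its modulus $U = |\uu|$ and its direction $F = \uu/|\uu|$, and then to reduce to a scalar variational problem on the unit hemisphere via a Nehari-type projection. The homogeneity of $\uu$ gives $\p_\nu \uu = \ka \uu$ on $\p B_1$ by Euler's identity. Integration by parts on $B_1$, together with $\Delta \uu = |\uu|^{p-2}\uu \chi_{\{|\uu|>0\}}$, yields the Pohozaev-type identity
\[W(\uu, 0, 1) = \frac{2-p}{p}\int_{B_1}|\uu|^p.\]
Applied to any half-space solution $\vv \in \Hf$, this gives $\omega_p = \frac{2-p}{p}\int_{B_1}|\vv|^p$, so the hypothesis $W(\uu,0,1) = \omega_p$ is equivalent to the $L^p$-equality $\int_{B_1}|\uu|^p = \int_{B_1}|\vv|^p$.

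Next, the pointwise identity $|\D\uu|^2 = |\D U|^2 + U^2|\D F|^2$ gives $W(\uu,0,1) = W_{\mathrm{sc}}(U,0,1) + \int_{B_1} U^2|\D F|^2$, where $W_{\mathrm{sc}}$ denotes the scalar Weiss energy. By \eqref{sub2}, $U$ is a scalar subsolution: $\Delta U \geq U^{p-1}$ in $\{U>0\}$. Writing $U(x) = r^\ka\varphi(\theta)$ with $\varphi \geq 0$ on $\Ss^{n-1}_+ := \Ss^{n-1}\cap\{x_n \geq 0\}$ and $\varphi = 0$ on $\p\Ss^{n-1}_+$, the subsolution inequality translates to $-\Delta_{\Ss}\varphi - \ka(\ka+n-2)\varphi + \varphi^{p-1} \leq 0$ on $\{\varphi>0\}$; testing against $\varphi$ yields $A(\varphi) + \int \varphi^p \leq 0$, where $A(\varphi) := \int_{\Ss^{n-1}}(|\D_{\Ss}\varphi|^2 - \ka(\ka+n-2)\varphi^2)$. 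Rescaling $\tilde\varphi := t\varphi$ with $t^{2-p} := \int\varphi^p/|A(\varphi)| \in (0, 1]$ places $\tilde\varphi$ on the scalar Nehari manifold $\cN_{\mathrm{sc}} := \{\psi \ge 0 : A(\psi) + \int\psi^p = 0,\ \psi \not\equiv 0\}$ and yields $M_{\mathrm{sc}}(\tilde\varphi) = t^p \cdot \frac{2-p}{p}\int\varphi^p \leq \frac{2-p}{p}\int\varphi^p$, where $M_{\mathrm{sc}}(\psi) := \int_{\Ss^{n-1}}(|\D_{\Ss}\psi|^2 - \ka(\ka+n-2)\psi^2 + \tfrac{2}{p}\psi^p)$.

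Invoking the Alt--Phillips ground state characterization---namely, that $\varphi_0 := c_p\theta_n^\ka$ is the unique positive solution of the semilinear spherical equation $-\Delta_{\Ss}\varphi - \ka(\ka+n-2)\varphi + \varphi^{p-1} = 0$ on $\Ss^{n-1}_+$ with Dirichlet boundary data---identifies $\varphi_0$ as the unique $\cN_{\mathrm{sc}}$-minimizer of $M_{\mathrm{sc}}$, producing the reverse inequality $M_{\mathrm{sc}}(\tilde\varphi) \geq M_{\mathrm{sc}}(\varphi_0) = \frac{2-p}{p}\int\varphi_0^p$. Combined with the $L^p$-equality $\int\varphi^p = \int\varphi_0^p$, this forces $t = 1$ and $\varphi = \varphi_0$ (the former making the subsolution inequality an equality, hence $\varphi$ a scalar solution). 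Thus $U = c_p(x_n)_+^\ka$, and plugging back, $W_{\mathrm{sc}}(U,0,1) = \omega_p = W(\uu,0,1)$ forces $\int_{B_1}U^2|\D F|^2 = 0$, so $F$ is constant on each connected component of $\{U > 0\} = \{x_n > 0\}$. Since this set is connected for $n \geq 2$, $F \equiv f$ on $\{U > 0\}$ for some unit vector $f \in \R^m$, and hence $\uu = c_p(x_n)_+^\ka f \in \Hf$.

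The principal technical obstacle is the scalar ground state uniqueness invoked above. This is the hemispherical counterpart of the classical Alt--Phillips rigidity stating that the only nonnegative homogeneous degree-$\ka$ solution of the scalar equation $\Delta U = U^{p-1}\chi_{\{U>0\}}$ supported in $\{x_n \geq 0\}$ is the one-dimensional profile $V = c_p(x_n)_+^\ka$. It can be established by symmetrization in the $\theta_n$-variable to reduce to an ODE, followed by phase-plane analysis using the free-boundary condition $\varphi' = 0$ at $\p\{\varphi>0\}$ enforced by the $C^{1,\ka-1}$ regularity of the solution.
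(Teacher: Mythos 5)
Your architecture --- Pohozaev identity converting $W(\uu,0,1)=\omega_p$ into an $L^p$-equality, the splitting $|\D\uu|^2=|\D|\uu||^2+|\uu|^2|\D F|^2$, and reduction to a spherical variational problem --- is genuinely different from the paper's, and its outer layers are sound: the identity $W(\uu,0,1)=\frac{2-p}{p}\int_{B_1}|\uu|^p$ is correct for $\ka$-homogeneous solutions, and once $|\uu|=c_p(x_n)_+^\ka$ is known, $\int_{B_1}U^2|\D F|^2=0$ does force $F$ to be constant on the connected set $\{x_n>0\}$. The difficulty is that the entire weight of the argument rests on the step you yourself flag as the ``principal technical obstacle,'' and that step is a genuine gap, not a routine verification. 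What you actually need is: among all nonnegative $\varphi$ supported in $\Ss^{n-1}\cap\{x_n\ge0\}$ with $A(\varphi)+\int\varphi^p=0$, the functional $M_{\mathrm{sc}}$ is minimized \emph{uniquely} by $\varphi_0=c_p\theta_n^\ka$. This is strictly stronger than the input the paper imports from \cite{FotShaWei21}, which gives only the non-strict bound $W\ge\omega_p$ and only for homogeneous \emph{solutions}, not for arbitrary Nehari competitors. It also does not reduce to ``uniqueness of the positive solution of the spherical Euler--Lagrange equation with Dirichlet data'': a Nehari minimizer could a priori be supported in a strict subcone of the hemisphere, and since $0<p<1$ the constraint functional $\varphi\mapsto\int\varphi^p$ is concave and non-differentiable across $\{\varphi=0\}$, so passing from ``minimizer'' to ``solution up to the free boundary'' is itself nontrivial. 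Your proposed remedy (cap symmetrization plus phase-plane analysis) would further require the equality case of the spherical P\'olya--Szeg\H{o} inequality --- a Brothers--Ziemer type statement that is delicate on its own --- together with a full uniqueness analysis of a singular ODE free boundary problem. None of this is standard or cited, so the proof is incomplete at its decisive point.

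For contrast, the paper sidesteps the spherical classification entirely: using only the non-strict bound $W\ge\omega_p$ together with the monotonicity and upper semicontinuity of the Weiss energy based at translated points, it shows that $W(\uu,x_0,0+)=\omega_p$ at \emph{every} $x_0\in\partial\{|\uu|>0\}\setminus\{0\}$, hence that $\uu$ is $\ka$-homogeneous about every free boundary point; a short geometric argument then identifies the positivity cone with $\{x_n>0\}$ and shows $\uu$ is one-dimensional. If you wanted to complete your route, the missing spherical uniqueness would most naturally be proved by exactly this translated-Weiss mechanism, at which point the Nehari reduction becomes redundant.
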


\begin{proof}
By the homogeneity of $\uu$, $\cC:=\{|\uu|>0\}$ is a cone. We split the proof into three steps.

\medskip\noindent\emph{Step 1.} In this step, we show that for any point $x_0\in\partial\cC\setminus\{0\}$, $\uu$ is homogeneous of degree $\ka$ with respect to $x_0$. Indeed, for any $j>0$, we have by homogeneity of $\uu$
\begin{align*}
    W(\uu,x_0,0+)=\lim_{r\to0+}W(\uu,x_0,r)=\lim_{r\to0+}W(\uu,x_0/j,r/j)=W(\uu,x_0/j,0+).
\end{align*}
By the upper semicontinuity of $x\longmapsto W(\uu,x,0+)$, we further have
\begin{align}
    \label{eq:weiss-x_0}
    W(\uu,x_0,0+)=\limsup_{j\to\infty}W(\uu,x_0/j,0+)\le W(\uu,0,0+)=\omega_p.
\end{align}
By following the first part of the proof in \cite[Proposition~4.6]{FotShaWei21}, we can obtain that if $\vv\not\equiv0$ is a homogeneous solution of degree $\ka$ satisfying $\{|\vv|=0\}^\circ\neq\emptyset$, then $W(\vv,0,1)\ge\omega_p$. Then \eqref{eq:weiss-x_0} gives $W(\uu,x_0,0+)=\omega_p$. Moreover, $W(\uu,x_0,\infty)=W(\uu,0,\infty)=\omega_p$. Thus, $\uu$ is homogeneous of degree $\ka$ with respect to $x_0$.

\medskip\noindent\emph{Step 2.} In this step, we show that $\cC=\{x_n>0\}$. We assume to the contrary $\cC\neq\{x_n>0\}$. Then, since $\cC\subset\{x_n>0\}$, we can find two distinct points $y_1,y_2\subset\partial\cC\setminus\{0\}$ such that $[y_1,y_2]\cap\cC=(y_1,y_2)$. Since $y_1\in\partial\cC\setminus\{0\}$, the result in Step 1 implies that for all $x\in[y_1,y_2]$, $|\uu(x)|=C|x-y_1|^\ka$ for some $C>0$. This contradicts $\uu(y_2)=0$.

\medskip\noindent\emph{Step 3.} The aim of this step is to show that $\uu\in\Hf$. Indeed, from results in the previous steps, $\{|\uu|>0\}=\{x_n>0\}$ and $\uu$ is homogeneous with respect to any point on $\{x_n=0\}$. By the homogeneity of $\uu$ with respect to $0$, $\uu(0,x_n)=u(0,1)x_n^\ka$ for any $x_n>0$. Then, whenever $x'\in\R^{n-1}\setminus\{0\}$ and $x_n>0$, the homogeneity of $\uu$ with respect to $\left(x'+x_n\frac{x'}{|x'|},0\right)$ gives 
$$
\uu(x',x_n)=\uu(0,x_n+|x'|)\left(\frac{x_n}{x_n+|x'|}\right)^\ka=\uu(0,1)x_n^\ka.
$$
This completes the proof.    
\end{proof}

\begin{lem}
    \label{lem:unique-blowup}
    Let $\uu$ be an energy minimizer to $E(\cdot)$ in $B_1$. Suppose that $\uu$ is $\bar\e$-flat in $B_1$ for some $\bar\e>0$ small universal, and $0\in\Gamma(\uu)$. Then there are unique unit vectors $e_0\in\R^n$ and $f_0\in \R^m$ such that as $r\to0$
    $$
    \frac{\uu(rx)}{r^\ka}\to c_p\mean{x,e_0}_+^\ka f_0,\quad x\in B_1.
    $$
\end{lem}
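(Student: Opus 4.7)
The plan is to obtain uniqueness of the blow-up by iterating the improvement-of-flatness lemma (Lemma \ref{IMPF}) at a fixed universal scale, producing geometrically Cauchy sequences of flat directions in $\R^n$ and $\R^m$ whose limits determine the unique blow-up profile. Since every minimizer is a viscosity solution (Proposition \ref{prop:min-vis}), Lemma \ref{IMPF} applies to $\uu$ once its hypotheses are verified. First I would use Lemma \ref{change} to upgrade the flatness in the sense of Definition \ref{def:flat} into the normalized form \eqref{flat1}--\eqref{34} (at the cost of replacing $\bar\e$ by $\bar\e^{1/(2\ka)}$, which we keep small universal by taking $\bar\e$ smaller).

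Next, fix a universal $r_0 \in (0,1)$ as allowed in Lemma \ref{IMPF}, and define the rescalings
$$
\uu_k(x) := \frac{\uu(r_0^k x)}{r_0^{k\ka}}, \qquad k=0,1,2,\ldots,
$$
each of which is again a viscosity solution to \eqref{VOP} in $B_1$. Applied inductively, Lemma \ref{IMPF} provides unit directions $\nu_k \in S^{n-1}$ and $g_k \in S^{m-1}$ such that $\uu_k$ satisfies \eqref{flat1}--\eqref{34} in $B_1$ with directions $(\nu_k, g_k)$ and flatness parameter $\e_k = \bar\e \, 2^{-k}$, together with
$$
|\nu_{k+1}-\nu_k| \le C\e_k, \qquad |g_{k+1}-g_k| \le C\e_k.
$$
Here one checks that after rescaling $B_{r_0}\to B_1$, the conclusions \eqref{flat_imp}--\eqref{new} at step $k$ are exactly the hypotheses \eqref{flat1}--\eqref{34} at step $k+1$ with flatness $\e_{k+1}=\e_k/2$; the exponent $3/4$ in \eqref{new} is what makes this self-improving. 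Summing geometric series, $\{\nu_k\}$ and $\{g_k\}$ are Cauchy, and we set $e_0 := \lim_k \nu_k$, $f_0 := \lim_k g_k$.

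The rescaled flatness at step $k$ reads
$$
u_0(\langle x,\nu_k\rangle - \e_k) \le \langle \uu_k, g_k\rangle \le |\uu_k| \le u_0(\langle x,\nu_k\rangle + \e_k)
$$
on $B_1 \cap \{\langle x,\nu_k\rangle \ge \e_k\}$, while $|\uu_k|\equiv 0$ on $B_1 \cap \{\langle x,\nu_k\rangle < -\e_k\}$; combining with the rescaled \eqref{34}, continuity of $u_0$ and of the inner product, and letting $k\to\infty$ yields
$$
\uu_k \longrightarrow c_p\,\langle x, e_0\rangle_+^{\ka}\, f_0 \qquad \text{uniformly on }\overline{B_1}.
$$
For a general $r\in(0,r_0]$, pick $k$ with $r\in[r_0^{k+1}, r_0^k]$ and write
$$
\frac{\uu(rx)}{r^\ka} = s^{-\ka}\, \uu_k(sx), \qquad s:=r/r_0^k \in [r_0,1].
$$
The $\ka$-homogeneity of the limit profile, together with the uniform convergence of $\uu_k$ on $\overline{B_1}$, then gives that $\uu(rx)/r^\ka \to c_p\,\langle x,e_0\rangle_+^{\ka}\,f_0$ uniformly on $B_1$ as $r\to 0$, proving both existence and uniqueness.

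The main technical point I expect is the bookkeeping needed to ensure that the flatness hypotheses propagate through the iteration with the correct constants — in particular that the $(\e/2)^{3/4}$ bound in \eqref{new} matches exactly the $\e_{k+1}^{3/4}$ tolerance demanded by \eqref{34} at the next step. This is built into Lemma \ref{IMPF}, so once one unwinds the scalings the iteration runs without further difficulty.
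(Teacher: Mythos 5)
Your argument is correct and is exactly the route the paper takes: the paper's proof of this lemma consists of the single remark that it ``follows by a standard technique'' from Lemma \ref{IMPF} (citing \cite[Lemma~8.4]{Vel23}), namely the geometric iteration of improvement of flatness that you have written out. Your bookkeeping of the rescaling (the $\ka$-homogeneity of $u_0$ turning $\eps r/2$ into $\eps/2$, the matching of the $(\eps/2)^{3/4}$ bound in \eqref{new} with hypothesis \eqref{34}, and the Cauchy estimates $|\nu_{k+1}-\nu_k|,|g_{k+1}-g_k|\le C\eps_k$) is accurate, so the proposal is a faithful expansion of the paper's intended proof.
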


\begin{proof}
    Once we have the improvement of flatness lemma (Lemma~\ref{IMPF}), Lemma~\ref{lem:unique-blowup} follows by a standard technique, see e.g., \cite[Lemma~8.4]{Vel23}.
\end{proof}

\begin{lem}\label{lem:blowup-diff-center}
    Let $\uu$ be an energy minimizer to $E(\cdot)$ in $B_1$. Suppose that $\uu$ is $\bar\e$-flat in $B_1$ for some $\bar\e>0$ small universal, and $0\in\Gamma(\uu)$. If $\{|\uu|>0\}\cap B_1\ni x_i\to 0$, then for $d_i:=\dist(x_i,\Gamma(\uu))$, 
    $$
    \uu_i(x):=\frac{\uu(x_i+d_ix)}{d_i^\ka}\to c_p\mean{x+e_0,e_0}_+^\ka f_0
    $$
    in $C^{1,\gamma}(B_{1/2})$ for any $0<\gamma<1$, where $e_0$ and $f_0$ are as in Lemma~\ref{lem:unique-blowup}.
\end{lem}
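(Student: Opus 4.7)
The plan is to reduce to a blow-up at a free boundary point. I would pick $y_i \in \Gamma(\uu)$ with $|x_i-y_i|=d_i$; Theorem~\ref{thm:main} ensures $\Gamma(\uu)$ is analytic (in particular $C^{1,\alpha}$) near $0$ with tangent plane $\{x\cdot e_0=0\}$, so $y_i\to 0$ and, by the closest-point characterization, the unit vector $\xi_i:=(x_i-y_i)/d_i$ is the inward unit normal $\nu(y_i)$ to $\Gamma(\uu)$ at $y_i$. Continuity of the normal then forces $\xi_i\to e_0$. Writing $\uu_i(x)=\vv_i(x+\xi_i)$ with $\vv_i(z):=\uu(y_i+d_iz)/d_i^\ka$, the claim reduces to showing $\vv_i\to c_p\langle z,e_0\rangle_+^\ka f_0$ in $C^{1,\gamma}$ on a neighborhood of $e_0$.

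Each $\vv_i$ is a minimizer of $E$ by scaling invariance of the functional, with $0\in\Gamma(\vv_i)$. Theorem~\ref{thm:grad-u-holder} together with the standard non-degeneracy $|\uu(x)|\asymp\dist(x,\Gamma(\uu))^\ka$ for Alt--Phillips minimizers yields uniform $C^{1,\ka-1}$ bounds for $\vv_i$ on every fixed ball. Passing to a subsequence, $\vv_i\to\vv_\infty$ in $C^{1,\gamma}_{\loc}(\R^n)$ for some global minimizer $\vv_\infty$ with $0\in\Gamma(\vv_\infty)$. I would next identify $\vv_\infty$ in three steps: (i) the $C^{1,\alpha}$-regularity of $\Gamma(\uu)$ at $0$ forces $\Gamma(\vv_i)=(\Gamma(\uu)-y_i)/d_i$ to converge in Hausdorff distance on compact sets to the tangent plane $\{z\cdot e_0=0\}$, so $\Gamma(\vv_\infty)=\{z\cdot e_0=0\}$; (ii) Proposition~\ref{prop:Weiss} gives $W(\vv_i,0,R)=W(\uu,y_i,d_iR)$, and since $y_i$ is a regular free boundary point with $W(\uu,y_i,0+)=\omega_p$, monotonicity combined with upper semicontinuity pins down $W(\vv_\infty,0,R)\equiv\omega_p$, so $\vv_\infty$ is $\ka$-homogeneous by Proposition~\ref{prop:Weiss}(ii); (iii) Lemma~\ref{lem:Half-space} then produces $\vv_\infty(z)=c_p\langle z,e_0\rangle_+^\ka f_\infty$ for some unit $f_\infty\in\R^m$.

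The main obstacle will be identifying $f_\infty=f_0$, since the scales $|y_i|$ and $d_i$ are unrelated and a direct Taylor-type expansion of $\uu(y_i+d_i z)$ around $0$ breaks down when $d_i\ll|y_i|^{1+\alpha}$. To handle this I plan to establish continuity of the blow-up direction on the regular part of $\Gamma(\uu)$: Lemma~\ref{lem:unique-blowup} assigns to each regular free boundary point $y$ near $0$ a unique pair $(\nu(y),f(y))$, and iterating the Improvement of Flatness Lemma~\ref{IMPF} starting from $y$ should produce a universal rate
\[
\Big\|\tfrac{\uu(y+r\cdot)}{r^\ka}-c_p\langle\cdot,\nu(y)\rangle_+^\ka f(y)\Big\|_{L^\infty(B_1)}\le Cr^\alpha,
\]
with $C,\alpha>0$ independent of $y$ in a neighborhood of $0$. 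This uniformity forces $y\mapsto f(y)$ to be continuous, so $f(y_i)\to f_0$, and evaluating along the diagonal $r=d_i$ yields $\vv_i\to c_p\langle\cdot,e_0\rangle_+^\ka f_0$ locally uniformly, hence $f_\infty=f_0$.

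Finally, to upgrade the $C^{1,\gamma}_{\loc}$ convergence with $\gamma<\ka-1$ to $C^{1,\gamma}(B_{1/2})$ for every $\gamma<1$, I observe that $\dist(\xi_i,\Gamma(\vv_i))=1$, so for $i$ large $B_{1/2}(\xi_i)$ is at distance at least $1/2$ from $\Gamma(\vv_i)$ and contained in $\{z\cdot e_0>c_0\}$ for some $c_0>0$. On this set $|\vv_i|$ is bounded above and below by positive constants (by optimal regularity and non-degeneracy, respectively), so $\Delta\vv_i=|\vv_i|^{p-2}\vv_i$ has smooth coefficients there and standard interior elliptic estimates promote the convergence to $C^{k,\gamma}$ for any $k$ and any $\gamma<1$. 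Translating by $\xi_i$ then gives the claimed $C^{1,\gamma}(B_{1/2})$ convergence of $\uu_i$ to $c_p\langle x+e_0,e_0\rangle_+^\ka f_0$.
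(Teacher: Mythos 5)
Your overall architecture coincides with the paper's: blow up at the nearest free boundary point $y_i$, get compactness from uniform $C^{1,\ka-1}$ bounds, use the Weiss monotonicity formula (Proposition~\ref{prop:Weiss}) to force $\ka$-homogeneity of the limit, and invoke Lemma~\ref{lem:Half-space} to classify it as a half-space solution. Two remarks on where you diverge. First, for the identification $f_\infty=f_0$ you propose the heavier route of a uniform convergence rate for blow-ups at all nearby free boundary points (via iterating Lemma~\ref{IMPF}) and continuity of $y\mapsto f(y)$; this is valid and robust, but the paper's Step~3 is much shorter: evaluate at $x=0$ to get $c_pf_\infty=\lim\uu(x_i)/d_i^\ka=\lim\bigl(|x_i|/d_i\bigr)^\ka\,\uu(x_i)/|x_i|^\ka$, and since $\uu(x_i)/|x_i|^\ka\to c_p\langle x_*,e_0\rangle_+^\ka f_0$ by the unique blow-up at the origin (Lemma~\ref{lem:unique-blowup}), the $\R^m$-direction of $\uu(x_i)$ converges to $f_0$, forcing $f_\infty=f_0$ with no extra machinery. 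Second, your final interior-elliptic upgrade on $B_{1/2}(\xi_i)$, using that this ball sits at distance $1/2$ from $\Gamma(\uu_i)$ where $|\uu_i|$ is bounded below, is a clean and in fact more careful way to reach $C^{1,\gamma}$ for \emph{every} $\gamma<1$ than compactness from the $C^{1,\ka-1}$ bound alone (which only yields $\gamma<\ka-1$).

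One step is under-justified: you cite Theorem~\ref{thm:grad-u-holder} ``together with non-degeneracy'' for the uniform $C^{1,\ka-1}$ bounds on $\vv_i$, but that theorem requires a uniform bound on $\|\vv_i\|_{W^{1,2}(B_1)}$, and non-degeneracy (a lower bound) is not the relevant input. What is needed is the optimal upper growth $\sup_{B_r(y_i)}|\uu|\le Cr^\ka$ at free boundary points near $0$ — which the paper extracts from the flatness iteration \eqref{flat_imp} — combined with the Weiss monotonicity formula to convert it into the scaled energy bound $E(\uu,y_i,r)\le Cr^{n+2\ka-2}$, hence $E(\vv_i,1)\le C$. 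You should supply this energy estimate explicitly; once it is in place, the rest of your argument goes through.
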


\begin{proof}
The proof of this follows that of \cite[Lemma~2.4]{FotKoc24}. However, we give the full proof since there are technical differences. We divide the proof into three steps.

\medskip\noindent\emph{Step 1.} For each $i$, $y_i\in \partial B_{d_i}(x_i)\cap \Gamma(\uu)$. Since $\Gamma(\uu)$ is $C^{1,\al}$
 by Theorem~\ref{main}, we have by Lemma~\ref{lem:unique-blowup} that $\nu_i:=\frac{x_i-y_i}{d_i}\to e_0$ as $i\to\infty$. Moreover, thanks to \eqref{flat_imp}, we can find universal constants $C>0$ and $r_0>0$ such that $\sup_{B_r}|\uu|\le Cr^\ka$, $0<r<r_0$. This, along with the Weiss-type monotonicity formula (Proposition~\ref{prop:Weiss}), gives
 $$
 E(\uu,r)\le r^{n+2\ka-2}E(\uu,1)+\frac\ka r\int_{\partial B_r}|\uu|^2\le Cr^{n+2\ka-2}(E(\uu,1)+1).
 $$
Since $\uu$ is $\bar\e$-flat in $B_1$, we have that for large $i$,  $\uu(\cdot+y_i)$ is $2\bar\e$-flat in $B_{1/2}$, hence
$$
E(\uu,y_i,r)\le Cr^{n+2\ka-2}(E(\uu,1)+1),\quad 0<r<r_0/2.
$$
This implies that for large $i$,
$$
E(\uu_i,1)\le C(E(\uu,1)+1).
$$
Since $\uu_i$ is a minimizer of $E(\cdot)$ in $B_1$, we have by Theorem~\ref{thm:grad-u-holder} that $\uu_i\in C^{1,\ka-1}(\overline{B_{1/2}})$ with
$$
\|\uu_i\|_{C^{1,\ka-1}(\overline{B_{1/2}})}\le C(E(\uu,1)+1).
$$
Thus, for any $0<\gamma<1$, $\uu_i\to\uu_0$ in $C^{1,\gamma}(\overline{B_{1/2}})$ for some global solution $\uu_0$. 

\medskip\noindent\emph{Step 2.} For $x\in\Gamma(\uu)$ near $0$, $\lim_{r\to0+}W(\uu,x,r)=\omega_p$ by Lemma~\ref{lem:unique-blowup}. Since $r\longmapsto W(\uu,x,r)$ is monotone, the convergence is uniform by Dini's monotone convergence theorem. Then, for $0<r<1$,
\begin{align*}
    W(\uu,-e_0,r)=\lim_{i\to\infty}W(\uu_i,-\nu_i,r)=\lim_{i\to\infty}W(\uu,y_i,rd_i)=W(\uu,0,0+)=\omega_p.
\end{align*}
Thus, $\uu_0$ is homogeneous of degree $\ka$ with respect to $-e_0$ by Proposition~\ref{prop:Weiss}.

Moreover, we observe that for any $\e>0$, there is $\de>0$ such that for large $i$
$$
\left\{\lmean{\frac{x-y_i}{|x-y_i|}, \nu_i}<-\e \right\}\cap B_\de\subset \{\uu=0\},
$$
which, by a direct computation, is equivalent to
$$
\left\{\lmean{\frac{x+\nu_i}{|x+\nu_i|}, \nu_i}<-\e \right\}\cap B_{\de/d_i}\left(-x_i/d_i\right)\subset \{\uu_i=0\},
$$
hence
$$
\left\{\lmean{\frac{x+e_0}{|x+e_0|}, e_0}<-\e \right\}\cap B_\de\subset \{\uu_0=0\}.
$$
Since $\e$ is arbitrary, 
$$
\{\mean{x+e_0,e_0}<0\}\subset\{\uu_0=0\}.
$$

By summing up the above results, we have by Lemma~\ref{lem:Half-space} that
$$
\uu_0(x)=c_p\mean{x+e_0,e_0}_+^\ka f
$$
for some unit vector $f\in\R^m$.

\medskip\noindent\emph{Step 3.} It remains to show $f=f_0$. To this end, we observe
\begin{align*}
    c_pf=\uu_0(0)=\lim_{i\to\infty}\uu_i(0)=\lim_{i\to\infty}\frac{\uu(x_i)}{d_i^\ka}=\lim_{i\to\infty}\left(\frac{|x_i|}{d_i}\right)^\ka\frac{\uu(x_i)}{|x_i|^\ka}.
\end{align*}
Over a subsequence, $\frac{x_i}{|x_i|}\to x_*$ for some $x_*\in\partial B_1$, thus
$$
\lim_{i\to\infty}\frac{\uu(x_i)}{|x_i|^\ka}=\lim_{i\to\infty}\frac{\uu\left(|x_i|\frac{x_i}{|x_i|}\right)}{|x_i|^\ka}=c_p\mean{x_*,e_0}_+^\ka f_0.
$$
Therefore, $f=f_0$, as desired.
\end{proof}

We are now ready to prove analyticity of the free boundary for minimizers.



\medskip
\noindent
{\it Proof of Theorem \ref{thm:main}:}

In \cite{FotKoc24}, the authors obtained a similar result when $1\le p<2$ by using the following arguments: they used the partial Legendre transformation to reduce the regularity problem of the free boundary to a system which can be considered as a perturbation of a linear degenerate operator
$$
x_n\Delta+\gamma\partial_n,\qquad\text{where }\gamma=2(\ka-1).
$$
Then they applied the implicit function theorem to conclude the analyticity of the solution.

In our case $0<p<1$, once we have the optimal regularity of solutions of \eqref{eq:system}, the $C^{1,\alpha}$-regularity of the flat free boundary, and Lemma~\ref{lem:blowup-diff-center}, we can follow the argument above in \cite{FotKoc24} to prove that the flat free boundary is analytic. In fact, one can easily see that the argument in \cite[sections 3-5]{FotKoc24} regarding the partial hodograph-Legendre transformation holds as long as $\ka>1$.
\qed 


 \section*{Acknowledgements}
S. J. was supported by the Academy of Finland grant 347550 and the research fund of Hanyang University(HY-202400000003278). H.S. was supported by the Swedish Research Council (grant no.~2021-03700).





\end{document}